\theoremstyle{plain}
\newtheorem{thm}{Theorem}
  \theoremstyle{definition}
  \newtheorem{defn}[thm]{Definition}
  \theoremstyle{plain}
  \newtheorem{prop}[thm]{Proposition}
  \theoremstyle{plain}
  \newtheorem{lem}[thm]{Lemma}
  \theoremstyle{plain}
  \newtheorem{cor}[thm]{Corollary}
  \theoremstyle{remark}
  \newtheorem{claim}[thm]{Claim}
\DeclareMathOperator{\lip}{Lip}
\DeclareMathOperator{\bdim}{bdim}
\DeclareMathOperator{\diam}{diam}
\DeclareMathOperator{\sep}{sep}
\numberwithin{thm}{section}
\numberwithin{equation}{section}
\begin{document}

\title{Slow entropy and differentiable models for infinite-measure preserving
$\mathbb{Z}^{k}$ actions }

\author{Michael Hochman}

\thanks{Supported by NSF grant 0901534. }

\address{Fine Hall, Washington Rd, Princeton NJ 08544}

\email{hochman@math.princeton.edu}

\dedicatory{In memory of Dan Rudolph}

\subjclass[2000]{37A40, 37C35, 37C85}

\keywords{Ergodic theory, group action, infinite measure preserving action,
entropy, orbit growth.}
\begin{abstract}
We define {}``slow'' entropy invariants for $\mathbb{Z}^{d}$ actions
on infinite measure spaces, which measures growth of itineraries at
subexponential scales. We use this to construct infinite-measure preserving
$\mathbb{Z}^{2}$ actions which cannot be realized as a group of diffeomorphisms
of a compact manifold preserving a Borel measure, contrary to the
situation for $\mathbb{Z}$-actions, where every infinite-measure
preserving action can be realized in this way. 
\end{abstract}
\maketitle
\markboth{Michael Hochman}{Slow entropy for infinite measure preserving actions}

\section{\label{sec:Introduction}Introduction}

Let $T=(T^{u})_{u\in\mathbb{Z}^{k}}$ be a finite-measure preserving
(f.m.p.) $\mathbb{Z}^{k}$-action on a Lebesgue space $(\Omega,\mathcal{B},\mu)$.
We always assume the action is ergodic and free, and for simplicity
assume that the total mass is $\mu(\Omega)=1$. It is a classical
problem to determine when such an action has a differentiable model,
i.e. when it is isomorphic to the action of a group of diffeomorphisms
on a compact manifold preserving a Borel measure (and, more specifically,
when it has a smooth model, i.e. a differentiable model in which the
measure is absolutely continuous with respect to the volume). It is
well known that entropy presents various obstructions: a f.m.p. $\mathbb{Z}$-action
has a differentiable model if and only if the entropy is finite (whether
this suffices for a smooth model is a longstanding open question).
Sufficient conditions for a f.m.p. $\mathbb{Z}^{k}$ action to have
a differentiable model are not known when $k\geq2$, but a necessary
condition is that the entropy must be $0$, and other obstructions
of entropy type have also been identified, which we shall discuss
further below.

In this paper we investigate the existence of differentiable models
for infinite-measure preserving (i.m.p.) actions, that is, actions
such as above but with $\mu$ an infinite $\sigma$-finite measure.
For $\mathbb{Z}$-actions this question is trivial: \emph{every }ergodic
i.m.p. $\mathbb{Z}$-action has a differentiable model. Indeed, by
a theorem of Krengel, such actions have a two set generator \cite{Krengel970},
and hence one can transfer the measure to a horseshoe, giving a differentiable
version of the action (the existence of smooth models is again open,
for a discussion of the non-singular case, see \cite[Section 7]{DanilenkoSilva2008}).
The main result of this paper is that, for higher rank i.m.p. actions,
existence of differentiable models is not automatic: 
\begin{thm}
\label{thm:non-realizable actions-exist}There exist ergodic i.m.p.
$\mathbb{Z}^{2}$-actions without a differentiable model.
\end{thm}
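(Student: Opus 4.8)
The plan is to isolate a numerical invariant --- the slow entropy --- that counts, for a reference set $A$ of finite positive measure, how many distinct itinerary patterns $\left(1_{A}(T^{u}x)\right)_{u\in F_{n}}$ on the square box $F_{n}=\{0,\dots,n-1\}^{2}$ are needed to $\epsilon$-describe (in a Hamming/$\bar d$ sense, up to a fraction $\epsilon$ of the mass of $A$) the typical orbit, and then to study the growth of this count as $n\to\infty$. Because this quantity is defined purely measure-theoretically it is an isomorphism invariant, so it suffices to (i) bound its growth from above for any action admitting a differentiable model, and (ii) construct an ergodic i.m.p. $\mathbb{Z}^{2}$-action whose count grows strictly faster than that bound.

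For the upper bound, suppose $T$ is realized by commuting diffeomorphisms $f_{1},f_{2}$ of a compact $d$-manifold $M$ preserving an (infinite, $\sigma$-finite) Borel measure, with all of $f_{1}^{\pm1},f_{2}^{\pm1}$ of Lipschitz constant $\le L$. For $u=(u_{1},u_{2})\in F_{n}$ the map $T^{u}=f_{1}^{u_{1}}f_{2}^{u_{2}}$ has Lipschitz constant $\le L^{u_{1}+u_{2}}\le L^{2n}$, so any two points at distance $<\delta:=\epsilon L^{-2n}$ have $\epsilon$-close images under every $T^{u}$ with $u\in F_{n}$, and hence carry the same itinerary pattern except at those sites $u$ where $T^{u}x$ falls within $\epsilon$ of $\partial A$. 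Choosing $A$ with a thin boundary (so that the measure of its $\epsilon$-neighborhood is small relative to $\mu(A)$) and averaging this boundary defect over $F_{n}$ using invariance of $\mu$, one finds that for all but an $\epsilon$-fraction of the mass of $A$ a single representative pattern per $\delta$-ball $\epsilon$-describes the points of that ball. Since $\cov(M,\delta)\le\epsilon^{-d}L^{2dn}\le e^{Cn}$ for a suitable constant $C$ and all large $n$, the itinerary count is at most $e^{Cn}$; in particular the slow entropy of any differentiable model grows at most linearly in $n$.

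It remains to construct an ergodic i.m.p. $\mathbb{Z}^{2}$-action whose itinerary count beats every exponential $e^{Cn}$. I would do this by a cutting-and-stacking (tower) construction over a finite-measure base set $A$: at stage $n$ one prescribes the way orbits return to $A$ so that the family of return patterns on $F_{n}$ realizes a chosen collection of $\mathbb{Z}^{2}$-configurations of complexity $e^{g(n)}$ with $g(n)/n\to\infty$ (for instance $g(n)\asymp n\log n$, comfortably below the trivial ceiling $n^{2}\log 2$). The infinite invariant measure is produced by letting the towers grow unboundedly tall, so that the bulk of each orbit is spent outside $A$ and the total mass is $\sigma$-finite and infinite; ergodicity and freeness are arranged in the usual way by the stacking rules. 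By construction the slow entropy of this action grows faster than linearly, so by the upper bound it admits no differentiable model, which proves the theorem.

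The main obstacle is the construction: the two generators must commute, which severely constrains how freely one may encode prescribed two-dimensional itinerary patterns, and this must be reconciled simultaneously with ergodicity and with \emph{exact} preservation of an infinite $\sigma$-finite measure. Producing a provable lower bound $e^{g(n)}$ with $g(n)\gg n$ on the itinerary complexity of a genuine commuting, measure-preserving, ergodic action --- rather than of a mere subshift of abstract configurations --- is the technical heart of the argument.
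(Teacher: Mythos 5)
Your overall strategy --- an isomorphism-invariant itinerary-counting quantity, an exponential upper bound for Lipschitz/differentiable models, and a cutting-and-stacking construction beating it --- matches the paper's. But there are two genuine gaps, one in each half.

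First, the invariant as you define it degenerates in infinite measure. Since $\mu(A)<\infty=\mu(\Omega)$, the ratio ergodic theorem forces the frequency of visits of a typical orbit to $A$ over the box to tend to $0$ (compare the visit counts to $A$ with those to sets $B_k$ of measure tending to infinity). A disagreement between the $\{0,1\}$-itineraries of $x$ and $y$ at a site $u$ requires at least one of $T^{u}x,T^{u}y$ to lie in $A$, so the ordinary Hamming distance between typical itineraries tends to $0$ as $n\to\infty$, and for large $n$ a \emph{single} pattern $\epsilon$-describes essentially all of $A$. No construction can make this count grow, so your lower bound is unattainable as stated. The paper's fix is to replace the Hamming distance by the relative version \eqref{eq:Pn-distance}, dividing the number of disagreements by the number of sites at which at least one of the two orbits visits the finite-measure core; all the subsequent analysis (independence of the reference set, behaviour under refinement and approximation of partitions) is carried out for this normalized pseudo-metric.

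Second, even with the corrected invariant, the unconditional upper bound you sketch for differentiable models is not established --- the paper explicitly says it was unable to prove that $\rho_{slow}>1$ alone obstructs realization. The obstruction to your argument sits exactly where you pass from $d(x,y)<\delta$ to ``same pattern except near $\partial A$'': in the normalized metric one must metrically separate the compact pieces of the core from the \emph{infinite}-measure atom $\Omega\setminus A$, and inner regularity cannot exhaust an infinite-measure set by a compact set of almost-full measure; the mass of $\Omega\setminus A$ may accumulate arbitrarily close to $A$, so there is no uniform gap $\tau$ and no control on how often a $\delta$-close pair straddles the boundary relative to the (vanishingly sparse) visits to $A$. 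The paper circumvents this by introducing \emph{uniform slow entropy} and proving the obstruction only for actions that fail to have it (Theorem \ref{thm:large-s-implies-no-diff-model}); correspondingly its example is engineered so that the two-set partition $\{A,\Omega\setminus A\}$ has slow entropy $0$ while a refinement, obtained by independently Bernoulli-coloring the visits to $A$, has slow entropy $3/2$, the key quantitative input being that the return set $R_{n}(A,x)$ has cardinality $n^{3/2+o(1)}$ --- superlinear in $n$ yet of density zero in $Q_{n}$. Your proposal addresses neither the normalization nor the need for this non-uniformity hypothesis, and the construction itself, which you rightly identify as the technical heart, is left open.
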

The mechanism which underlies Theorem \ref{thm:non-realizable actions-exist},
as well as the classical results for f.m.p. actions mentioned above,
is, briefly, the following (see Section \ref{sec:Lipschitz-actions}
for more details). For a compact metric $d(\cdot,\cdot)$ on $\Omega$,
write \[
\sep(\Omega,d,\varepsilon)=\max\left\{ N\,\left|\begin{array}{c}
\exists\, x_{1},\ldots,x_{N}\in\Omega\mbox{ such that}\\
d(x_{i},x_{j})\geq\varepsilon\mbox{ for }1\leq i<j\leq N\end{array}\right.\right\} \]
For a $\mathbb{Z}^{k}$-action $T$ on $\Omega$ and $n\in\mathbb{N}$,
denote the Bowen metric by \[
d_{n}^{\infty}(x,y)=\sup_{\left\Vert u\right\Vert \leq n}d(T^{u}x,T^{u}y)\]
If the action is by Lipschitz maps and $(\Omega,d)$ has finite Minkowski
(box) dimension, as is the case for actions by diffeomorphisms on
compact manifolds, then it is simple to show that $\sep(\Omega,d_{n}^{\infty},\varepsilon)$
grows at most exponentially in $n$ for every $\varepsilon>0$. 

In contrast, dynamical invariants of entropy type give, for small
$\varepsilon>0$, lower bounds for the growth of $\sep(\Omega,d_{n}^{\infty},\varepsilon)$
as $n\rightarrow\infty$. For example, if a f.m.p. $\mathbb{Z}^{k}$
action has positive entropy then, when $\varepsilon$ is sufficiently
small, $\sep(\Omega,d_{n}^{\infty},\varepsilon)\geq c\exp(cn^{k})$
for some $c>0$, and hence, when $k>1$, this is an obstruction to
differentiable realization. A finer invariant, introduced by Katok
and Thouvenot under the name {}``slow entropy'' \cite{KatokThouvenot1997},
has the property that when its value for a f.m.p. action is $>\alpha$,
one is guaranteed for small $\varepsilon$ that $\sep(\Omega,d_{n}^{\infty},\varepsilon)\geq c\exp(n^{\alpha})$
along some subsequence as $n\rightarrow\infty$. 

The proof of Theorem \ref{thm:non-realizable actions-exist} follows
a similar strategy. The first step is to define a {}``slow entropy''
invariant $\rho_{slow}(T,\mu)$ of i.m.p. $\mathbb{Z}^{k}$-actions.
The definition is a variation on the standard name-counting definition
of entropy, using a stretched-exponential scale. Since the details
are important for the discussion that follows, we give the complete
definition here. 

Let $\mathcal{P}$ be a finite, measurable partition of $\Omega$.
We restrict the discussion to \emph{co-finite }partitions, i.e. we
assume that all atoms but one have finite measure. The union of the
finite-measure atoms is called the \emph{core}.%
\footnote{In order to apply what follows to f.m.p. actions, assume instead that
all atoms of $\mathcal{P}$ are finite, and then the core is the entire
space. %
} Given $n\in\mathbb{N}$ let \[
Q_{n}=\{u\in\mathbb{Z}^{k}\,:\,\left\Vert u\right\Vert _{\infty}\leq n\}\]
For $x,y\in\Omega$, the usual way to compare the partial orbits of
$x,y$ on $Q_{n}$ is by the Hamming distance between their $\mathcal{P}$-itineraries,
i.e., the proportion of $u\in Q_{n}$ such that $T^{u}x,T^{u}y$ belong
to different atoms of $\mathcal{P}$. However, since the core of $\mathcal{P}$
has finite measure and $\mu(\Omega)=\infty$, the ergodic theorem
tells us that the orbits of $x,y$ typically spend a 0-fraction of
their time in the core, and hence almost all their time in the common
atom of infinite measure. Consequently the Hamming distance tends
to $0$ as $n\rightarrow\infty$, and $\mu$-a.a. pair $x,y$ is Hamming-asymptotic.
Instead, we introduce a Hamming-like distance {}``relative'' to
the visits to the core: \begin{equation}
d_{\mathcal{P},n}(x,y)=\frac{\#\{u\in Q_{n}\,:\, T^{u}x,T^{u}y\mbox{ are in different }\mathcal{P}\mbox{-atoms}\}}{\#\{u\in Q_{n}\,:\, T^{u}x\mbox{ or }T^{u}y\mbox{ are in the core of }\mathcal{P}\}}\label{eq:Pn-distance}\end{equation}
With the convention $0/0=0$ this is a pseudo-metric on $\Omega$
(Lemma \ref{lem:pseudo-metric-verification}). We denote the $d_{\mathcal{P},n}$-diameter
of $E\subseteq A$ by \[
\diam(E,d_{\mathcal{P},n})=\sup_{x,y\in E}d_{\mathcal{P},n}(x,y)\]

Next, given $\varepsilon>0$, the usual definition of entropy counts
the number of sets of $d_{\mathcal{P},n}$-diameter $\varepsilon$
that are needed to cover all but an $\varepsilon$-fraction of the
space. Since in our setting $\mu(\Omega)=\infty$, this does not makes
sense. Instead we fix a reference set $A\subseteq\Omega$ of positive
and finite measure, and define the $\varepsilon$-covering number
by\begin{equation}
N(A,d_{\mathcal{P},n},\varepsilon)=\min\left\{ N\,\left|\begin{array}{c}
\exists\; E_{1},\ldots,E_{N}\subseteq\Omega\mbox{ with }\diam(E_{i},d_{\mathcal{P},n})\leq\varepsilon\\
\mbox{ and }\mu(A\cap\bigcup_{i=1}^{N}E_{i})\geq(1-\varepsilon)\mu(A)\end{array}\right.\right\} \label{eq:epsilon-covering-numbers}\end{equation}
Note that $N(A,d_{\mathcal{P},n},\varepsilon)$ is monotone in $\varepsilon$,
and we may define \begin{equation}
\rho_{slow}(T,\mu,A,\mathcal{P})=\lim_{\varepsilon\searrow0}\left(\limsup_{n\rightarrow\infty}\frac{\log(\log N(A,d_{\mathcal{P},n},\varepsilon))}{\log n}\right)\label{eq:slow-scale}\end{equation}
Note that by dividing by $\log n$ rather than $\log|Q_{n}|=k\log(2n+1)$
we have normalized $\rho_{slow}$ so that $0\leq\rho_{slow}(T,\mu)\leq k$.
We shall see later that $\rho_{slow}(T,\mu,A,\mathcal{P})$ does not
depend on the choice of $A$ (Corollary \ref{cor:covering-rho-entropy-on-diffferent-sets}),
and we denote it $\rho_{slow}(T,\mu,\mathcal{P})$. Thus, roughly
speaking, $\rho_{slow}(T,\mu,\mathcal{P})=\alpha$ means that, when
$\varepsilon$ is small, $N(A,d_{\mathcal{P},n},\varepsilon)$ grows
like $2^{n^{\alpha}}$ along some subsequence. Finally, the \emph{slow
entropy }of $(\Omega,\mathcal{B},\mu,T)$ is \[
\rho_{slow}(T,\mu)=\sup\rho_{slow}(T,\mu,\mathcal{P})\]
where the supremum is over co-finite partitions $\mathcal{P}$. 

The quantity $\rho_{slow}(T,\mu)$ is clearly an isomorphism invariant,
and it reduces to Katok-Thouvenot slow entropy when applied to f.m.p.
$\mathbb{Z}^{k}$-actions (to do so we drop the co-finiteness condition
on partitions). It also shares several basic features with entropy.
In particular, we shall see that it can be computed from a generating
co-finite partition.

Returning to our original problem, the second step in the proof of
Theorem \ref{thm:non-realizable actions-exist} is to relate $\rho_{slow}(T,\mu)$
to the growth of $\sep(\Omega,d_{n}^{\infty},\varepsilon)$ when $\Omega$
is endowed with a metric. One might expect, as in the f.m.p. case,
that $\rho_{slow}(T,\mu)>1$ would imply super-exponential growth,
and hence be an obstruction to differentiable realization. While it
is possible that this is true, we have not been able to prove it.
Instead, we introduce the following notion:
\begin{defn}
\label{def:pure-actions}An action $(\Omega,\mathcal{B},\mu,T)$ has
uniform slow entropy $\alpha$ if $\rho_{slow}(T,\mu,\mathcal{P})=\alpha$
for every non-trivial co-finite partition $\mathcal{P}$. 
\end{defn}
Equivalently, there is no factor with lower slow entropy. The condition
of uniform slow entropy is similar to uniform entropy dimension as
defined by Dou, Huang and Park \cite{DouHuangPark2011} (entropy dimension
is an invariant of f.m.p. $\mathbb{Z}$-actions which measures subexponential
growth of the entropy of partitions refined along subsequences, but
appears not to be equivalent to slow entropy). uniform $\alpha$-slow
entropy may be seen as generalizing completely positive entropy, since
f.m.p. $\mathbb{Z}^{k}$-actions of completely positive entropy have
uniform slow entropy $k$.
\begin{thm}
\label{thm:large-s-implies-no-diff-model} Suppose that $(\Omega,\mathcal{B},\mu,T)$
is an ergodic $\mathbb{Z}^{k}$-action by Lipschitz maps on a complete
metric space of finite box dimension, preserving a Borel measure.
If the action does not have uniform slow entropy then $\rho_{slow}(T,\mu)\leq1$.
\end{thm}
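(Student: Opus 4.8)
The plan is to fix an arbitrary non-trivial co-finite partition $\mathcal{P}$ and bound $\rho_{slow}(T,\mu,\mathcal{P})$, exploiting the hypothesis through a single lower-entropy factor. Since the action is not uniform, choose a co-finite factor $\mathcal{Q}$ with $\beta:=\rho_{slow}(T,\mu,\mathcal{Q})<\rho_{slow}(T,\mu)=:\alpha$. The target is the inequality
\[
\rho_{slow}(T,\mu,\mathcal{P})\le\max(\beta,1)\qquad\text{for every co-finite }\mathcal{P}.
\]
Granting this, taking the supremum over $\mathcal{P}$ gives $\alpha\le\max(\beta,1)$; since $\beta<\alpha$, if we had $\alpha>1$ then $\max(\beta,1)<\alpha$, a contradiction, so $\alpha\le1$. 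Thus the whole theorem reduces to this one bound, and the interesting content is how the metric hypotheses and the factor $\mathcal{Q}$ combine to produce the two competing exponents $\beta$ and $1$.

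First I would isolate the metric contribution. Refine the core of $\mathcal{P}$ by a \emph{tame} partition $\mathcal{R}$, obtained from the finite $\delta$-cover of $\Omega$ supplied by finite box dimension, so that every atom of $\mathcal{R}$ has diameter $<\delta$ while the infinite atom is left intact. Covering the reference set $A$ by Bowen balls gives $\cov(\Omega,d_n^\infty,\delta)\le Ce^{cn}$ sets, using the Lipschitz bound on $\sep(\Omega,d_n^\infty,\cdot)$ recalled in the introduction. Two points in one Bowen ball have $d(T^ux,T^uy)<2\delta$ for all $u\in Q_n$, hence agree in their $\mathcal{R}$-itinerary at every time \emph{except} those $u$ at which the orbit lies within $2\delta$ of $\partial\mathcal{R}$. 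Consequently the $d_{\mathcal{R},n}$-diameter of such a ball is at most the fraction, among visits to the core, of visits landing in the $2\delta$-neighborhood of $\partial\mathcal{R}$. If this near-boundary fraction were $\le\varepsilon$ we would get $\diam(E_i,d_{\mathcal{R},n})\le\varepsilon$ and hence $\rho_{slow}(T,\mu,\mathcal{R})\le1$; the same comparison, applied to $\mathcal{P}$ against its tame refinement $\mathcal{R}$, would then transfer the bound to $\mathcal{P}$ since $\mathcal{R}$ determines $\mathcal{P}$ away from the $\delta$-neighborhood of $\partial\mathcal{P}$.

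The entire difficulty therefore collapses onto controlling this near-boundary fraction, and this is exactly where the ratio ergodic theorem would be needed but is unavailable. For a single map the Hopf theorem would give convergence of this fraction to $\mu(\{d(\cdot,\partial\mathcal{P})<\delta\})/\mu(\core\,\mathcal{P})$, which is $<\varepsilon$ once $\delta$ is small, and one would even obtain the unconditional conclusion $\rho_{slow}(T,\mu)\le1$. For $\mathbb{Z}^k$ with $k\ge2$ the pointwise ratio ergodic theorem fails along the boxes $Q_n$, so along bad subsequences of $n$ the near-boundary fraction can remain bounded away from $0$ for a.e.\ point even though the offending set has arbitrarily small measure. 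This oscillation is precisely the mechanism that lets a measurably tiny boundary inflate covering numbers, and it is why the tame bound above is not automatic.

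The role I expect the factor $\mathcal{Q}$ to play is to convert this defect into the exponent $\beta$ rather than the trivial exponent $k$. The key point I would try to establish is a \emph{dichotomy}: the near-boundary oscillation of a tame set is a universal feature of the orbit geometry, so if it is present it injects the \emph{same} complexity into every co-finite partition and forces uniform slow entropy; hence under non-uniformity it must be absent at the level of the lower factor $\mathcal{Q}$. Concretely, I would attempt to slave the pattern of ambiguous (near-$\partial\mathcal{P}$) times to the $\mathcal{Q}$-itinerary up to controlled relative density, so that enumerating this pattern costs only the covering number already charged to $\mathcal{Q}$, namely $2^{n^{\beta}}$, while the complementary times are resolved by the Bowen count, costing $e^{cn}$. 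On the doubly-logarithmic scale the product $2^{n^{\beta}}\cdot e^{cn}$ contributes $\max(\beta,1)$, which is exactly the bound sought. Making this slaving precise — showing that the failure of near-boundary control cannot occur below the slow entropy of a proper factor, equivalently that ratio-ergodic oscillation propagates uniformly across all partitions — is the crux and the step I expect to be hardest; the two covering estimates and the $\log\log$ arithmetic around them are comparatively soft.
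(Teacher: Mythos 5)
Your reduction to the bound $\rho_{slow}(T,\mu,\mathcal{P})\le\max(\beta,1)$ is logically sound, and your first covering step (Bowen balls supplied by finite box dimension and the Lipschitz estimate, each of small $d_{\mathcal{R},n}$-diameter provided the near-boundary visit fraction is small) matches the skeleton of the paper's argument. But your diagnosis of where this breaks is wrong, and the repair you sketch is not carried out. First, the pointwise ratio ergodic theorem along the cubes $Q_n$ does \emph{not} fail for $\mathbb{Z}^k$, $k\ge2$: it is Theorem \ref{thm:ratio-ergodic-thm} of this paper (proved in \cite{Hochman2009e}) and is used repeatedly, including in this very proof. The genuine obstruction is different: the ``boundary'' whose $2\delta$-neighborhood you must control includes the interface between the core and the \emph{infinite} atom $P_1$, and since $\mu(P_1)=\infty$ the mass of $P_1$ may accumulate arbitrarily close to the core, so $\mu(\{x:d(x,\partial\mathcal{R})<2\delta\})$ need not be small --- or even finite --- as $\delta\to0$. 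The ratio ergodic theorem (Hopf's included, so this already fails for $k=1$ in your framing) applies only to $L^1$ numerators and gives no control here; this is exactly the point of the remark closing Section \ref{sub:Lipshifts-actions-dimension-and-entropy}. If your tame-partition step worked as stated it would yield the unconditional conclusion $\rho_{slow}(T,\mu)\le1$, which the introduction explicitly says the author cannot prove.

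Second, the step you yourself call the crux --- slaving the ambiguous times to an arbitrary lower-entropy factor $\mathcal{Q}$ --- is left as a hope, and an arbitrary factor cannot do the job: what must be controlled is the pattern of visits to the core of the particular partition $\mathcal{P}$ under consideration, and a generic low-complexity factor says nothing about that pattern. The paper's mechanism is to arrange, via non-uniformity (Lemma \ref{lem:slow-core-for-non-pure-actions}), a partition $\mathcal{P}$ with $\rho_{slow}(T,\mu,\mathcal{P})>\gamma>1$ whose \emph{own} core-indicator partition $\mathcal{A}=\{\Omega\setminus A,A\}$ has slow entropy below some $\beta<\gamma$. Then $A$ is covered by fewer than $2^{n^{\beta}}$ sets of small $d_{\mathcal{A},n}$-diameter, within which the ambiguity caused by $P_1$ is neutralized; inner regularity supplies compact $\tau$-separated sets $K_i$ exhausting the \emph{finite} atoms (where the ratio ergodic theorem does apply, to $1_{P_i\setminus K_i}$), and Lemma \ref{lem:from-pattern-to-bowen-metrics} converts a large $d_{\mathcal{R},n}$-separated set inside one such piece into a $(d_n^{\infty},\tau)$-separated set of size $2^{n^{(1+\gamma)/2}}$, contradicting finite box dimension. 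To salvage your route you would need to prove your dichotomy, and as stated it is aimed at the wrong boundary.
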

This is proved in Section \ref{sub:Lipshifts-actions-dimension-and-entropy}.
Following this we derive Theorem \ref{thm:non-realizable actions-exist}
by constructing, via cutting and stacking, a $\mathbb{Z}^{2}$-action
which does not have uniform slow entropy and with $\rho_{slow}(T,\mu)>1$.

Having defined slow entropy for i.m.p. actions, let us now say a few
words and make some speculation about its relation to entropy theory.
In fact, the literature already contains at least four definitions
of entropy for i.m.p. actions: Krengel entropy \cite{Krengel1967},
Parry entropy \cite{Parry1969}, the entropy of Danilenko and Rudolph
\cite{DanilenkoRudolph2009}, and Poisson entropy \cite{JanvresseMeyerovitchRoy2010}.
These are not believed to be equivalent, although inequivalence has
only been established between Krengel and Poisson entropies \cite{JanvresseDeLaRue2010}. 

Since $\rho_{slow}$ does not aim to capture exponential growth of
orbits, it should not be compared to invariants which do. Instead
one may ask whether it is related to one of the existing entropy theories
in the same way that Katok-Thouvenot slow entropy is related to Kolmogorov-Sinai
entropy. That is, if the definition of $\rho_{slow}$ is modified
to measure growth of orbits at the exponential scale, does the resulting
invariant coincide with one of the existing entropy theories for i.m.p.
actions? 

The obvious modification to make is to replace the ratio in the limit
\eqref{eq:slow-scale} with the quantity $|Q_{n}|^{-1}\cdot\log(N(A,\mathcal{P},n,\varepsilon))$.
If we do so, however, the resulting invariant $\rho_{exp}$ is trivial,
assigning the value is $0$ to every i.m.p. action. One way to understand
this fact is simply that i.m.p. actions are best understood as analogues
of zero entropy systems, and that the subexponential scale is the
appropriate one with which to study orbit growth for such actions.

Instead, there is a more interesting modification which involves a
re-scaling of time along orbits. The idea presented next is very close
to the entropy theory for cross sections developed recently by N.
Avni \cite{Avni2010}, which deals with countable probability-preserving
equivalence relations endowed with a cocycle into an amenable group.
In our setting such a relation arises by restricting the orbit relation
and orbit cocycle to a set $A$ of finite measure, and the mean ergodic
theorem for cross sections in the sense of \cite[Theorem 2.6]{Avni2010}
follows from the ratio ergodic theorem \cite{Hochman2009e}. However,
we shall present a more concrete version of the idea adapted to our
notation.

As we have already observed, when one considers the partial orbit
$(T^{u}x)_{u\in Q_{n}}$ for a large $n$, the frequency of visits
to the core is asymptotically negligible, and furthermore this frequency
depends on $x$. Instead, one can choose $Q_{n}$ in a manner depending
on $x$ so that the number of visits is approximately constant. More
precisely, for $x\in\Omega$ and $n\in\mathbb{N}$ define $m=m(x,n)$
to be the smallest integer such that there are at least $|Q_{n}|$
elements $u\in Q_{m}$ for which $T^{u}x$ is in the core of $\mathcal{P}$.
Define a pseudo-metric $\widetilde{d}_{\mathcal{P},n}$ on $\Omega$
by\[
\widetilde{d}_{\mathcal{P},n}(x,y)=d_{\mathcal{P},m(x,n)}(x,y)+d_{\mathcal{P},m(y,n)}(x,y)\]
Thus, we are comparing portions of the itineraries of $x,y$ which
have a similar number $|Q_{n}|$ of visits to the core. 

Proceeding as before but using the metric $\widetilde{d}_{\mathcal{P},n}$,
we can now define invariants $\widetilde{\rho}_{exp}$ and $\widetilde{\rho}_{slow}$
using, respectively, exponential and stretched-exponential scales. 

It turns out that $\widetilde{\rho}_{exp}$ is, for $\mathbb{Z}$-actions,
nothing other than Krengel entropy. Let us recall the definition.
Assuming the action is generated by the transformation $T:\Omega\rightarrow\Omega$
and $A$ is a set of positive finite measure, let $\mu_{A}$ denote
the normalized restriction of $\mu$ to $A$ and let $T_{A}:A\rightarrow A$
be the first return map to $A$, i.e. $T_{A}x=T^{r(A,x)}x$ where
$r(A,x)=\min\{i>0\,:\, T^{i}x\in A\}$. Krengel entropy is then defined
by\[
h_{Kr}(T,\mu)=\mu(A)h(T_{A},\nu_{A})\]
(this is independent of $A$ by Abramov's formula). 

The equivalence follows from Avni's work, and we shall not prove it
here. We note however that $\widetilde{\rho}_{exp}$ can be taken
as a definition of Krengel entropy for $\mathbb{Z}^{k}$ actions,
$k>1$; a new definition is necessary because there is no good notion
of an induced map in the higher rank case. Avni's machinery works,
in fact, for general discrete amenable groups (and some non-discrete
ones) and it would give a definition of Krengel entropy for actions
of those groups if and when a ratio ergodic theorem becomes available
for them.

Thus, $\widetilde{\rho}_{slow}$ is to Krengel entropy as Katok-Thouvenot
slow entropy is to Kolmogorov-Sinai entropy. This raises the question,
what is the relation between $\rho_{slow}$ and $\widetilde{\rho}_{slow}$?
One might expect that the relation should in some way involve the
recurrence properties of the action, since the difference in their
definitions is a re-scaling of time along orbits which regularizes
the frequency of returns to the partition's core. 

One almost trivial relation between recurrence properties and slow
entropy is the following. The recurrence set of $x$ to $A$, up to
{}``time'' $n$, is \begin{equation}
R_{n}(A,x)=\{u\in Q_{n}\,:\, T^{u}x\in A\}\label{eq:recurrence-set}\end{equation}
Define\begin{equation}
\alpha(A,x)=\limsup_{n\rightarrow\infty}\frac{\log|R_{n}(A,x)|}{\log|Q_{n}|}\label{eq:alpha}\end{equation}
Hence $0\leq\alpha\leq1$, and $\alpha$ is the largest number such
that $|R_{n}(A,x)|\approx|Q_{n}|^{\alpha+o(1)}$ along some subsequence.
By the Chacon-Ornstein lemma for $\mathbb{Z}^{k}$ actions \cite{Hochman2009e},
$\alpha(A,\cdot)$ is invariant, i.e. $\alpha(A,x)=\alpha(A,T^{u}x)$
for all $u\in\mathbb{Z}^{k}$, so by ergodicity $\alpha(A,x)$ is
a.s. independent of $x$. Also, given another set $B$ of finite measure,
the ratio ergodic theorem implies that $|R_{n}(A,x)|/|R_{n}(B,x)|\rightarrow\mu(A)/\mu(B)$,
hence the a.s. value of $\alpha(A,x)$ is also independent of $A$.
This justifies defining the \emph{recurrence dimension }$\alpha(T,\mu)$
of the action to be this value. We then have the following combinatorial
bound:
\begin{prop}
\label{pro:recurrence-bound-for-rho-slow}$\rho_{slow}(T,\mu)\leq k\alpha(T,\mu)$.
\end{prop}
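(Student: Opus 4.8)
The plan is to bound the covering number $N(A,d_{\mathcal{P},n},\varepsilon)$ directly in terms of the number of distinct itineraries one sees on the recurrence set, and to observe that this number is controlled by $|R_n(A,x)|$, whose growth exponent is exactly $\alpha(T,\mu)$. The key point is that the denominator in \eqref{eq:Pn-distance} only counts visits to the \emph{core} of $\mathcal{P}$, so two points $x,y$ whose $\mathcal{P}$-itineraries agree on \emph{every} $u\in Q_n$ for which $T^u x$ or $T^u y$ lands in the core have $d_{\mathcal{P},n}(x,y)=0$. Hence the number of $d_{\mathcal{P},n}$-balls of radius $0$ needed to cover the relevant part of $A$ is at most the number of distinct words $(\mathcal{P}(T^u x))_{u\in S}$, where $S$ ranges over the core-visit positions inside $Q_n$.

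First I would fix the reference set $A$ and, using Proposition-level facts already established (that $\alpha(A,\cdot)$ is a.s.\ constant and equal to $\alpha(T,\mu)$, via the Chacon--Ornstein lemma \cite{Hochman2009e}), choose a large subset $A'\subseteq A$ with $\mu(A')\geq(1-\varepsilon)\mu(A)$ on which the core-visit count is uniformly controlled: for $x\in A'$ and all large $n$ one has $\#\{u\in Q_n : T^u x \text{ is in the core}\}\leq |Q_n|^{\alpha(T,\mu)+\delta}$, where $\delta>0$ is arbitrarily small and fixed in advance. Here one must be slightly careful: the denominator of \eqref{eq:Pn-distance} counts $u$ for which $T^u x$ \emph{or} $T^u y$ is in the core, so I would work with the pointwise core-visit set of a single point and absorb the pairwise version into a factor of $2$ in the exponent, which is harmless after taking logarithms twice. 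Second, I would count itineraries: if $\mathcal{P}$ has $r$ atoms and the core is visited at most $M:=|Q_n|^{\alpha+\delta}$ times, then the number of possible restrictions of the $\mathcal{P}$-name to the core-visit positions is at most $\binom{|Q_n|}{\leq M} r^{M}$, and each such restriction determines a set of $d_{\mathcal{P},n}$-diameter $0$. This gives
\[
N(A,d_{\mathcal{P},n},\varepsilon)\leq \binom{|Q_n|}{\leq M}\, r^{M}.
\]

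Third, I would estimate this binomial-type sum. Since $M=|Q_n|^{\alpha+\delta}=o(|Q_n|)$, the standard bound $\binom{|Q_n|}{\leq M}\leq |Q_n|^{M}$ (or the entropy estimate $\exp(|Q_n| H(M/|Q_n|))$) shows
\[
\log N(A,d_{\mathcal{P},n},\varepsilon)\leq M\log(r\,|Q_n|) \leq C\,|Q_n|^{\alpha+\delta}\log|Q_n|
\]
for a constant $C$. Taking $\log$ once more and dividing by $\log n$, the dominant term is $(\alpha+\delta)\log|Q_n| = (\alpha+\delta)\,k\log(2n+1)$ up to lower-order additive corrections from the $\log\log|Q_n|$ piece, so
\[
\frac{\log\log N(A,d_{\mathcal{P},n},\varepsilon)}{\log n}\leq k(\alpha+\delta)+o(1).
\]
Letting $n\to\infty$, then $\varepsilon\searrow0$, then $\delta\searrow0$ yields $\rho_{slow}(T,\mu,\mathcal{P})\leq k\alpha(T,\mu)$ for every co-finite $\mathcal{P}$, and taking the supremum over $\mathcal{P}$ gives the claim.

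I expect the main obstacle to be the first step: controlling the core-visit count \emph{uniformly} over a set of nearly full measure in $A$. The a.s.\ statement $\alpha(A,x)=\alpha(T,\mu)$ is only a $\limsup$ along some subsequence and holds pointwise, so upgrading it to a bound valid for all large $n$ simultaneously on a set $A'$ of measure $\geq(1-\varepsilon)\mu(A)$ requires an Egorov-type argument together with care about whether the exceptional $n$ for different points can be aligned. One clean way around this is to observe that we only need the \emph{covering} inequality at each fixed $n$, and then take $\limsup_n$ at the end; thus it suffices to cover, for each $n$, the subset of $A$ on which the core-visit count up to $Q_n$ is at most $|Q_n|^{\alpha+\delta}$, and to show this subset has measure $\geq(1-\varepsilon)\mu(A)$ for all large $n$. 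The latter follows because $\limsup_n |R_n|/\log$-normalized exponent equals $\alpha$ a.s., so the measure of points exceeding $|Q_n|^{\alpha+\delta}$ at scale $n$ tends to $0$; making this quantitative via the ratio/Chacon--Ornstein machinery is the technical heart of the argument, but it is routine given the tools cited.
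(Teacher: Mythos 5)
Your proof is correct, and the underlying count is the same as the paper's: the number of possible recurrence/core-visit patterns of size at most $M=|Q_n|^{\alpha+\delta}$ is bounded by a binomial coefficient of size roughly $2^{M\log|Q_n|}$, and a second logarithm kills the $\log|Q_n|$ factor, leaving $k(\alpha+\delta)$. Where you genuinely diverge is in how you handle an arbitrary co-finite partition $\mathcal{P}$. The paper does not work with $d_{\mathcal{P},n}$ at all: it first proves (Proposition \ref{pro:recurrence-defintion-of-rho-entropy}) that $\rho_{slow}$ equals the recurrence-pattern invariant $\widetilde{\rho}_{slow}$ computed from the two-set partitions $\{A,\Omega\setminus A\}$ -- a reduction that relies on Krengel's two-set generator theorem and the generating-partition lemma -- and then only needs to count the sets $R_n(A,x)$ themselves, with no colors. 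You instead observe directly that fixing the set $S$ of core-visit times \emph{and} the $\mathcal{P}$-colors on $S$ determines the entire $(\mathcal{P},n)$-name (everything off $S$ lies in the infinite atom), so each such datum is a class of $d_{\mathcal{P},n}$-diameter zero; the extra factor $r^{M}$ is harmless after two logarithms. This is more self-contained (no appeal to Krengel or to Proposition \ref{pro:recurrence-defintion-of-rho-entropy}) at the cost of slightly heavier bookkeeping, and in fact your accounting of the factor $k$ via $\log|Q_n|=k\log(2n+1)$ is cleaner than the paper's, whose penultimate line reads $\widetilde{\rho}\leq\alpha+\varepsilon$ where $k(\alpha+\varepsilon)$ is meant. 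One small remark: your worry about the uniformity of the core-visit bound is overstated. Since $\limsup_n \log|R_n(A,x)|/\log|Q_n|=\alpha$ a.e., for a.e.\ $x$ one has $|R_n(A,x)|\leq|Q_n|^{\alpha+\delta}$ for \emph{all} $n\geq n_0(x)$ (the subsequence caveat only affects the lower bound), and a one-line Egorov argument makes $n_0$ uniform on a subset of $A$ of measure $(1-\varepsilon)\mu(A)$; this is exactly what the paper does, and either that or your fixed-$n$ workaround closes the gap. Also note that the factor of $2$ you reserve for the pairwise denominator is unnecessary: if $x,y$ share the same core-visit data then their names agree everywhere and $d_{\mathcal{P},n}(x,y)=0$ outright.
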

We shall see in Section \ref{sub:a-rank-1-action} that strict inequality
is possible, but in view of the discussion above, one might expect
a more precise relationship such as $\rho_{slow}(T,\mu)=\alpha(T,\mu)\cdot\widetilde{\rho}_{slow}(T,\mu)$,
or perhaps at least an inequality between these quantities. Alternatively,
one might expect at least that $0<\widetilde{\rho}_{exp}(T,\mu)<\infty$
implies $\rho_{slow}(T,\mu)=k\cdot\alpha(T,\mu)$. A related phenomenon
was observed in the work of Galatolo, Kim and Park \cite{GalatoloKimPark2006},
who studied the relation of recurrence rates and Krengel entropy in
i.m.p. $\mathbb{Z}$-actions. We leave this matter for future investigation,
but note that, in any event, $\rho_{slow}$ seems better suited to
our application than $\widetilde{\rho}_{slow}$.

\subsection*{\emph{Acknowledgment}}

I am grateful to Nir Avni for some interesting discussions. This work
was completed during a visit to the Theory Group at Microsoft Research
in Redmond, Wa., and I would like to thank my hosts for their hospitality
and support.

\section{\label{sec:rho-entropy}Slow entropy}

\subsection{\label{sub:ratio-ergodic-thm}The ratio ergodic theorem}

For basic background on ergodic theory and infinite ergodic theory,
see \cite{Walters82} and \cite{Aaronson97}, respectively. We recall
the following basic fact, which was proved in \cite{Hochman2009e}:
\begin{thm}
\label{thm:ratio-ergodic-thm} If $(\Omega,\mathcal{B},\mu,(T_{u})_{u\in\mathbb{Z}^{k}})$
is an ergodic i.m.p. action, then for every $f,g\in L^{1}(\mu)$ with
$\int g\ d\mu\neq0$,\[
\lim_{n\rightarrow\infty}\frac{\sum_{u\in Q_{n}}f(T^{u}x)}{\sum_{u\in Q_{n}}g(T^{u}x)}=\frac{\int f\, d\mu}{\int g\, d\mu}\qquad\mu\mbox{-a.e.}\]

\end{thm}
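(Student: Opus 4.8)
The plan is to deduce the a.e.\ statement from a purely combinatorial \emph{ratio maximal inequality} on $\mathbb{Z}^{k}$, obtained by transference, and then to run the standard two-part scheme: a.e.\ convergence via (maximal inequality)\,+\,(convergence on a dense class), followed by identification of the limit via ergodicity. Throughout write $S_{n}h(x)=\sum_{u\in Q_{n}}h(T^{u}x)$. Splitting $f,g$ into positive and negative parts and using linearity of the eventual ratio limit, it suffices to treat $f,g\ge0$; since $\int g\,d\mu\neq0$ and the action is conservative and ergodic, the denominators $S_{n}g(x)$ are eventually positive a.e., so the quotients make sense.

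First I would record the identification of the limit, granting a.e.\ convergence. Fix $g\ge0$ with $\int g\,d\mu>0$ and suppose $L_{f}(x)=\lim_{n}S_{n}f(x)/S_{n}g(x)$ exists a.e.\ for every $f\ge0$. Because the cubes $Q_{n}$ are F\o{}lner, i.e.\ $|Q_{n}\triangle(Q_{n}+v)|/|Q_{n}|\to0$ for each $v\in\mathbb{Z}^{k}$, the sums $S_{n}f(T^{v}x)$ and $S_{n}f(x)$ differ only by contributions from the boundary layer $Q_{n}\triangle(Q_{n}+v)$; estimating these against $S_{n}g$ by the maximal inequality below shows that $L_{f}$ is $T$-invariant, hence a.e.\ equal to a constant $c_{f}$ by ergodicity. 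To evaluate $c_{f}$ I would use the exact identity, valid for every $n$ by measure-preservation,
\[
\frac{\int f\,d\mu}{\int g\,d\mu}=\frac{\int S_{n}f\,d\mu}{\int S_{n}g\,d\mu}=\int\frac{S_{n}f}{S_{n}g}\,d\nu_{n},\qquad d\nu_{n}:=\frac{S_{n}g\,d\mu}{\int S_{n}g\,d\mu},
\]
where each $\nu_{n}$ is a probability measure; since $S_{n}f/S_{n}g\to c_{f}$ a.e., passing to the limit (the interchange justified by the maximal inequality, giving uniform integrability) forces $c_{f}=\int f/\int g$, as required.

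For a.e.\ convergence itself I would invoke the Banach principle: granting the weak-type maximal inequality below, the set of $f$ for which $S_{n}f/S_{n}g$ converges a.e.\ is closed in $L^{1}$, so it suffices to exhibit convergence on a dense subclass. I would take the span of multiples $c\cdot g$ (on which the ratio is constant) together with the coboundaries $h-h\circ T^{e_{i}}$ in the $k$ coordinate directions; the closed span of these is dense in $L^{1}$, and on a coboundary $S_{n}(h-h\circ T^{e_{i}})$ telescopes to a sum over $\partial Q_{n}$, which divided by $S_{n}g\to\infty$ tends to $0$. Everything therefore rests on the maximal inequality, which after Calder\'on transference (legitimate because the action is measure-preserving and a.e.\ free, so each orbit is a copy of $\mathbb{Z}^{k}$ on which $f,g$ restrict to weights $\tilde f,\tilde g\ge0$) reduces to the combinatorial statement: for finitely supported $\tilde f,\tilde g\ge0$ and $\lambda>0$,
\[
\sum_{v\in E_{\lambda}}\tilde g(v)\ \le\ \frac{C_{k}}{\lambda}\sum_{v\in\mathbb{Z}^{k}}\tilde f(v),\qquad E_{\lambda}=\Bigl\{v:\ \sup_{n}\frac{\sum_{u\in Q_{n}}\tilde f(v+u)}{\sum_{u\in Q_{n}}\tilde g(v+u)}>\lambda\Bigr\},
\]
with $C_{k}$ depending only on $k$.

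The proof of this inequality is a covering argument, and it is the genuine obstacle and the reason the result is nontrivial for $k\ge2$. For each $v\in E_{\lambda}$ one selects a witnessing cube $v+Q_{n(v)}$ on which $\sum\tilde f>\lambda\sum\tilde g$, giving a family of axis-parallel cubes covering $E_{\lambda}$; if one could extract a bounded-overlap subfamily still covering $E_{\lambda}$, summing the defining inequality over it would yield the bound. For $k=1$ this is immediate from the Hopf filling scheme, or from the monotonicity of the nested one-parameter family of cubes at each center. For $k\ge2$ the difficulty is that the weight $\tilde g$ is \emph{arbitrary and need not be doubling}, so the classical Vitali device of passing to a dilate $3(v+Q_{n(v)})$ is fatal: the $\tilde g$-mass on the dilate cannot be compared to that on the cube. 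I expect this to be the main obstacle, and the way around it is a more careful selection exploiting the specific structure of the family --- all cubes are concentric translates forming, at each center, a nested increasing chain --- to build an essentially disjoint subfamily whose \emph{union itself}, rather than a dilation of it, recaptures $E_{\lambda}$ up to bounded multiplicity, producing the dimensional constant $C_{k}$ with no doubling hypothesis on $\tilde g$.
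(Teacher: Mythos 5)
The paper does not actually prove this theorem: it is quoted from \cite{Hochman2009e}, the same source as the Chac\'on--Ornstein lemma (Theorem \ref{thm:Chacon-Ornstein}). Your overall architecture --- Calder\'on transference to a weighted maximal inequality on $\mathbb{Z}^{k}$, Banach principle, dense class, identification of the constant by ergodicity --- does match the architecture of that proof, and your diagnosis of the covering difficulty is accurate: Vitali dilation is unusable because $\tilde g$ need not be doubling, and the remedy is a Besicovitch-type selection for centered cubes, which bounds overlap without dilating and so tolerates arbitrary weights.

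The genuine gap is in the dense-class step, and it sits exactly where the $k=1$ (Hopf) argument fails to generalize. For a coboundary $h-h\circ T^{e_{i}}$, the sum $S_{n}(h-h\circ T^{e_{i}})$ telescopes to a signed sum of $h\circ T^{u}$ over $Q_{n}\triangle(Q_{n}+e_{i})$, a set of $2(2n+1)^{k-1}$ sites. When $k=1$ this is two terms, so boundedness of $h$ together with $S_{n}g\to\infty$ finishes the argument; when $k\geq2$ the boundary sum is itself unbounded, and ``$S_{n}g\to\infty$'' proves nothing: the trivial bound $\|h\|_{\infty}\cdot2(2n+1)^{k-1}$ need not be $o(S_{n}g)$, since for i.m.p.\ actions $S_{n}g$ can grow far more slowly than $n^{k-1}$ (e.g.\ products of slowly recurrent $\mathbb{Z}$-actions, where $S_{n}1_{A}\sim n^{\varepsilon}$). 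What is needed is precisely $\sum_{u\in Q_{n}\triangle(Q_{n}+e_{i})}|h|(T^{u}x)=o(S_{n}g(x))$ a.e., i.e.\ the Chac\'on--Ornstein lemma for $\mathbb{Z}^{k}$; this is a nontrivial theorem in its own right, is not a consequence of the maximal inequality (knowing $\sup_{n}S_{n}|h|/S_{n}g<\infty$ does not control the annular increment $S_{n+1}|h|-S_{n-1}|h|$ against $S_{n}g$), and your sketch neither proves it nor flags that it is needed. The same issue recurs silently in your invariance-of-the-limit step, which also discards boundary layers. Separately, your identification $c_{f}=\int f/\int g$ interchanges an a.e.\ limit with integration against the varying probability measures $\nu_{n}$, whose mass spreads over the whole infinite-measure space; ``uniform integrability from the maximal inequality'' does not obviously justify this, and the standard identification instead goes through induced maps on a finite-measure set or a Hopf-type filling argument.
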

In particular if $A,B$ have positive finite measure then, taking
$f=1_{A}$ and $g=1_{B}$, we have\[
\lim_{n\rightarrow\infty}\frac{|\{u\in Q_{n}\,:\, T^{u}x\in A\}|}{|\{u\in Q_{n}\,:\, T^{u}x\in B\}|}=\frac{\mu(A)}{\mu(B)}\]
Thus, the relative frequency with which an orbit visits $A$ and $B$
is equal to their relative masses. Note that for finite $\mu$ this
result is an easy consequence of the pointwise ergodic theorem, which
states that the frequency of visits to \emph{each }of the sets is
asymptotically equal to the mass of the set. This reasoning is not
valid in the infinite measure case; indeed, the frequency of visits
to a finite measure set is asymptotically zero. To see this, let $A$
be fixed and choose a sequence of sets $B_{k}$ with $\mu(B_{k})\rightarrow\infty$.
Then for each $n$,\[
\limsup_{n\rightarrow\infty}\frac{|\{u\in Q_{n}\,:\, T^{u}x\in A\}|}{|Q_{n}|}\leq\lim_{n\rightarrow\infty}\frac{|\{u\in Q_{n}\,:\, T^{u}x\in A\}|}{|\{u\in Q_{n}\,:\, T^{u}x\in B_{k}\}|}=\frac{\mu(A)}{\mu(B_{k})}\]
Taking $k\rightarrow\infty$ we find that the frequency of visits
to $A$ is zero.

A related result, which implies the ratio ergodic theorem, is the
Chacon-Ornstein lemma \cite{ChaconOrnstein60} which was proved for
$\mathbb{Z}^{k}$-actions in \cite{Hochman2009e}:
\begin{thm}
\label{thm:Chacon-Ornstein} Let $(\Omega,\mathcal{B},\mu,T)$ be
an i.m.p. $\mathbb{Z}^{k}$-action. Then for any non-negative $0\neq f\in L^{1}(\mu)\cap L^{\infty}(\mu)$,\[
\lim_{n\rightarrow\infty}\frac{\sum_{u\in Q_{n+1}\setminus Q_{n}}f(T^{u}x)}{\sum_{u\in Q_{n}}f(T^{u}x)}=0\qquad\mu\mbox{-a.e.}\]

\end{thm}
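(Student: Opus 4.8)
The plan is to reconstruct the classical Hopf--Chacon--Ornstein maximal argument in a form that survives the passage from $\mathbb{Z}$ to $\mathbb{Z}^{k}$. Write $S_{n}f(x)=\sum_{u\in Q_{n}}f(T^{u}x)$; the quotient in the statement is $(S_{n+1}f-S_{n}f)/S_{n}f=S_{n+1}f/S_{n}f-1$, and since $S_{n+1}f\geq S_{n}f$ the claim is exactly that $L(x):=\limsup_{n}S_{n+1}f(x)/S_{n}f(x)$ equals $1$ a.e. Because the action is ergodic and infinite-measure preserving it is conservative, so for $0\neq f\geq0$ one has $S_{n}f(x)\to\infty$ a.e. and the quotient is eventually defined. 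The first step is to show that $L$ is $T$-invariant: applying the ratio ergodic theorem (Theorem \ref{thm:ratio-ergodic-thm}) to the pair $f,\,f\circ T^{w}$, whose integrals coincide, gives $S_{m}(f\circ T^{w})/S_{m}f\to1$ a.e. for every fixed $w$, so a bounded translation of the averaging cube is asymptotically invisible; writing $S_{n+1}f(T^{e}x)/S_{n}f(T^{e}x)$ as a product of such translation ratios at radii $n$ and $n+1$ shows $L(T^{e}x)=L(x)$ for each generator $e$. By ergodicity $L$ is a.e. equal to a constant $L_{0}\geq1$, and it remains to prove $L_{0}=1$.

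Suppose instead $L_{0}=1+\delta$ with $\delta>0$, so that for a.e. $x$ there are infinitely many $n$ with $S_{n+1}f(x)>(1+\tfrac{\delta}{2})S_{n}f(x)$. Such an inequality says the shell at radius $n$ carries at least a $\tfrac{\delta}{2}$-fraction of the mass already accumulated on $Q_{n}$; were it to hold for all large $n$ the sums would grow geometrically in $n$, contradicting the polynomial bound $S_{n}f\leq\|f\|_{\infty}|Q_{n}|$ forced by $f\in L^{\infty}$. Infinitely many such jumps need only be logarithmically sparse, however, so no pointwise contradiction is available and one must integrate over $\Omega$. I would attach to almost every $x$ a witnessing radius $n(x)$, pass to the associated orbit cubes, and apply a Besicovitch/Vitali-type covering lemma for the family $\{v+Q_{n}\}$ in $\mathbb{Z}^{k}$ to select an essentially disjoint subfamily. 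Transferring the selection to the dynamics through the measure-preserving property, the defining inequality should yield a weak-type maximal estimate of the form
\[
\mu\{x:\ \sup_{n}\bigl(S_{n+1}f(x)-(1+\tfrac{\delta}{2})S_{n}f(x)\bigr)>0\}\ \leq\ C\delta^{-1}\int f\,d\mu\ <\ \infty .
\]
But if $L_{0}=1+\delta$ this set is conull, hence of infinite measure---the contradiction, which crucially exploits $\mu(\Omega)=\infty$. Therefore $L_{0}=1$.

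The main obstacle is the multidimensional covering lemma that powers this maximal inequality. For $k=1$ the needed statement is Hopf's maximal ergodic theorem, which one proves by the one-dimensional rising-sun lemma or by inducing on a set of finite measure; for $k\geq2$ neither tool is available, since the shells $Q_{n+1}\setminus Q_{n}$ carry no linear order and the higher-rank action admits no induced map. The crux is therefore to establish a covering family for the cubes $v+Q_{n}$ with uniformly bounded overlap, and to carry out the disjointification so that the ratio of shell mass to bulk mass is not degraded---precisely the combinatorial-geometric estimate underlying \cite{Hochman2009e}. Once that covering lemma is in hand, the transference to the measure-preserving action is routine, and the invariance reduction of the first paragraph together with the weak-type bound deliver $L_{0}=1$, which is the assertion of the lemma.
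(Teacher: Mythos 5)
First, note that the paper does not prove this statement at all: both Theorem \ref{thm:ratio-ergodic-thm} and Theorem \ref{thm:Chacon-Ornstein} are quoted from \cite{Hochman2009e}, so there is no in-paper argument to compare yours against. Judged on its own, your opening reduction is fine: since $S_{n+1}f\geq S_{n}f$ and $S_{n}f\rightarrow\infty$ a.e.\ by conservativity, the claim is exactly that $\limsup_{n}S_{n+1}f/S_{n}f=1$ a.e., and applying the ratio ergodic theorem to the pair $(f\circ T^{e},f)$ does show this $\limsup$ is invariant, hence a.e.\ constant by ergodicity. (Be aware, though, that the paper presents the Chacon--Ornstein lemma as the result which \emph{implies} the ratio ergodic theorem, so invoking Theorem \ref{thm:ratio-ergodic-thm} here is legitimate only if the latter has a proof independent of what you are trying to establish.)

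The genuine gap is in the second half. The entire content of the theorem is the maximal/covering estimate, and you explicitly defer it ({}``the main obstacle'', {}``the crux''), so nothing is actually proved. Worse, the weak-type inequality you write down is false as stated: the set $\{x:\sup_{n}(S_{n+1}f(x)-(1+\tfrac{\delta}{2})S_{n}f(x))>0\}$ typically has infinite measure for reasons having nothing to do with the asserted limit. Take $f=1_{A}$ with $0<\mu(A)<\infty$: for a.e.\ $x\notin A$, conservativity and ergodicity give a smallest $n\geq1$ with $S_{n}f(x)>0$, and at radius $n-1$ one has $S_{n}f(x)>(1+\tfrac{\delta}{2})S_{n-1}f(x)=0$, so the set in question contains a.e.\ point of the infinite-measure set $\Omega\setminus A$, while your right-hand side is $C\delta^{-1}\mu(A)<\infty$. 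Consequently the observation that {}``this set is conull, hence of infinite measure'' holds whether or not $L_{0}>1$, and the intended contradiction is vacuous. What must be bounded is the set of $x$ for which the jump $S_{n+1}f>(1+\tfrac{\delta}{2})S_{n}f$ occurs at \emph{infinitely many} radii (equivalently, at all radii past an $x$-dependent threshold), and a weak-type bound on a {}``$\sup_{n}$'' set does not control that. Some genuinely different localization --- restricting to a finite-measure set and to radii at which $S_{n}f$ has already accumulated a definite amount of mass, which is where the Besicovitch-type covering machinery of \cite{Hochman2009e} actually enters --- is required, and supplying it is the whole theorem.
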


\subsection{\label{sub:Definition-of-the-invariants}The metric}

Let $(\Omega,\mathcal{B},\mu,T)$ be an i.m.p. $\mathbb{Z}^{k}$-action.
For a co-finite partition $\mathcal{P}=\{P_{1},\ldots,P_{r}\}$ we
assume by convention that $P_{1}$ is the infinite atom, so $\mu(P_{i})<\infty$
for $i=2,\ldots,r$ and $\bigcup_{i=2}^{r}P_{i}$ is the core of $\mathcal{P}$. 

For $x\in\Omega$, write $\mathcal{P}(x)=i$ if $x\in P_{i}$, and
given $n\in\mathbb{N}$ denote by $x_{\mathcal{P},n}\in\{1,\ldots,r\}^{Q_{n}}$
the $(\mathcal{P},n)$-name $x$, i.e., the coloring $u\mapsto\mathcal{P}(T^{u}x)$
of $Q_{n}$. It is convenient to introduce a distance $d(a,b)$ for
$a,b\in\{1,\ldots,r\}^{Q_{n}}$ given by\begin{equation}
d(a,b)=\frac{|\{u\in Q_{n}\,:\, a_{u}\neq b_{u}\}|}{|\{u\in Q_{n}\,:\, a_{u}\neq1\mbox{ or }b_{u}\neq1\}|}\label{eq:metric-on-patterns}\end{equation}
using again the conventions that $0/0=0$. Then $d(a,b)\leq1$, and
the distance $d_{\mathcal{P},n}(\cdot,\cdot)$ from the introduction
is nothing other than $d_{\mathcal{P},n}(x,y)=d(x_{\mathcal{P},n},y_{\mathcal{P},n})$.
\begin{lem}
\label{lem:pseudo-metric-verification} $d$ is a metric and $d_{\mathcal{P},n}$
is a pseudo-metric. \end{lem}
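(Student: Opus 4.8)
The only substantial point is the triangle inequality for $d$; the remaining axioms are immediate, and the pseudo-metric claim for $d_{\mathcal{P},n}$ follows formally. The plan is as follows. Fix $n$ and write $Q=Q_{n}$. For $a,b\in\{1,\ldots,r\}^{Q}$ put $\supp(a)=\{u\in Q:a_{u}\neq1\}$, $N(a,b)=\{u:a_{u}\neq b_{u}\}$ and $D(a,b)=\{u:a_{u}\neq1\text{ or }b_{u}\neq1\}=\supp(a)\cup\supp(b)$, so $d(a,b)=|N(a,b)|/|D(a,b)|$. If $a_{u}\neq b_{u}$ then $a_{u},b_{u}$ are not both $1$, so $N(a,b)\subseteq D(a,b)$; hence $0\le d\le1$, symmetry is clear, and any single disagreement forces $|N(a,b)|\ge1$ and $|D(a,b)|\ge1$, so $d(a,b)\ge1/|Q|>0$ unless $a=b$. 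Thus only the triangle inequality remains.

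For the triangle inequality I would pass to the \emph{similarity} form. Fix $a,b,c$ and set $g_{ab}=|\{u:a_{u}=b_{u}\neq1\}|$ (and $g_{bc},g_{ac}$ likewise), and $D_{ab}=|D(a,b)|$, etc. Since the complement of $N(a,b)$ inside $D(a,b)$ is exactly the set of agreeing active coordinates, $D_{ab}-|N(a,b)|=g_{ab}$, so $d(a,b)=1-g_{ab}/D_{ab}$ whenever $D_{ab}>0$. Coordinates with all three values equal to $1$ contribute to none of these quantities, so I discard them and assume every $u$ has some value $\neq1$; the claim $d(a,c)\le d(a,b)+d(b,c)$ then becomes
\[
\frac{g_{ab}}{D_{ab}}+\frac{g_{bc}}{D_{bc}}\le1+\frac{g_{ac}}{D_{ac}}.
\]
Let $M$ count the $u$ at which at least two of $a_{u},b_{u},c_{u}$ differ from $1$, and let $\alpha_{0},\beta_{0},\gamma_{0}$ count the $u$ at which exactly one of them (respectively $a_{u},b_{u},c_{u}$) differs from $1$. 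Every coordinate counted by $M$ has $a_{u}$ or $b_{u}$ active, giving the clean identities $D_{ab}=M+\alpha_{0}+\beta_{0}$, $D_{bc}=M+\beta_{0}+\gamma_{0}$, $D_{ac}=M+\alpha_{0}+\gamma_{0}$. Writing $G_{ab}=\{u:a_{u}=b_{u}\neq1\}$ etc., one has $G_{ab}\cap G_{bc}\subseteq G_{ac}$ and $G_{ab}\cup G_{bc}\subseteq\{u:\text{at least two active}\}$, so by inclusion--exclusion $g_{ab}+g_{bc}\le M+g_{ac}$; also $g_{ab},g_{bc}\le M$.

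The displayed inequality then follows in two cases. If $g_{ab}+g_{bc}\le M$, then since $D_{ab},D_{bc}\ge M$ the left side is at most $(g_{ab}+g_{bc})/M\le1$. Otherwise $M>0$ and $g_{ac}\ge g_{ab}+g_{bc}-M$, and using $D_{ab}\ge M+\alpha_{0}$, $D_{bc}\ge M+\gamma_{0}$ (discarding $\beta_{0}$, which only helps), the reciprocal identities $\tfrac{1}{M+\alpha_{0}}-\tfrac{1}{D_{ac}}=\gamma_{0}/((M+\alpha_{0})D_{ac})$ and its analogue, and $g_{ab}\le M+\alpha_{0}$, $g_{bc}\le M+\gamma_{0}$, the computation to carry out is
\[
\frac{g_{ab}}{D_{ab}}+\frac{g_{bc}}{D_{bc}}-\frac{g_{ab}+g_{bc}}{D_{ac}}\le\frac{g_{ab}\,\gamma_{0}}{(M+\alpha_{0})D_{ac}}+\frac{g_{bc}\,\alpha_{0}}{(M+\gamma_{0})D_{ac}}\le\frac{\alpha_{0}+\gamma_{0}}{D_{ac}}=1-\frac{M}{D_{ac}},
\]
which rearranges to $\frac{g_{ab}}{D_{ab}}+\frac{g_{bc}}{D_{bc}}\le1+\frac{g_{ab}+g_{bc}-M}{D_{ac}}\le1+\frac{g_{ac}}{D_{ac}}$. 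The main obstacle, and the reason the argument is not merely formal, is that the crude subadditivity $|N(a,c)|\le|N(a,b)|+|N(b,c)|$ is too lossy (the inequality is tight, e.g. whenever $b_{u}\in\{a_{u},c_{u}\}$ for all $u$); one must instead use that the three denominators differ \emph{only} through the singleton counts $\alpha_{0},\beta_{0},\gamma_{0}$, which is exactly what the computation exploits. This is the colored analogue of the classical fact that the Jaccard (Marczewski--Steinhaus) distance is a metric, the novelty being the coordinates that are active in both $a$ and $b$ but carry different colors, which obstruct a direct reduction to that fact.

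Finally, for the pseudo-metric claim I would simply note that $d_{\mathcal{P},n}(x,y)=d(x_{\mathcal{P},n},y_{\mathcal{P},n})$ is the pull-back of the metric $d$ under the naming map $x\mapsto x_{\mathcal{P},n}$; non-negativity, symmetry and the triangle inequality are inherited directly, while $d_{\mathcal{P},n}$ fails to separate points only because distinct $x,y$ may share a $(\mathcal{P},n)$-name, so it is a pseudo-metric.
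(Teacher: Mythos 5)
Your proof is correct, and it takes a genuinely different route from the paper's. The paper argues directly on the ratio form: writing $A,B,C$ for the supports of $a,b,c$, it expresses $d(a,b)=\bigl(|A\setminus B|+|\{u\in B:a_{u}\neq b_{u}\}|\bigr)/|A\cup B|$, first treats the special case $B\subseteq A\cup C$ --- where $|A\cup C|$ dominates both other denominators and the triangle inequality reduces to a pointwise count showing that each $u$ with $a_{u}\neq c_{u}$ contributes at least once to the sum of the two numerators --- and then reduces the general case to this one by replacing $B$ with $B'=B\cap(A\cup C)$ and observing that restoring the discarded coordinates adds the same quantity to the numerator and denominator of a fraction bounded by $1$, which can only increase it. You instead pass to the similarity form $d=1-g/D$, stratify the active coordinates by how many of $a_{u},b_{u},c_{u}$ are nonunital (the counts $M,\alpha_{0},\beta_{0},\gamma_{0}$), prove the submodularity-type bound $g_{ab}+g_{bc}\le M+g_{ac}$, and close with an explicit algebraic estimate; I have checked the identities $D_{ab}=M+\alpha_{0}+\beta_{0}$ (and its analogues), the inclusion $G_{ab}\cap G_{bc}\subseteq G_{ac}$, and the two-case computation, and they all hold. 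Your route makes the kinship with the Jaccard--Steinhaus metric explicit and isolates precisely how the three denominators differ, at the cost of a longer calculation; the paper's argument is shorter and purely combinatorial but buries that structure in the $B\mapsto B'$ reduction. Both treatments leave the degenerate $0/0$ situations implicit, but these occur only when two of the three names coincide (forcing the corresponding distance to vanish), where the inequality is trivial, so this is not a gap in either version.
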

\begin{proof}
The second statement follows from the first. Positivity and symmetry
of $d$ are immediate, so it remains to show, for $a,b,c\in\{1,\ldots,r\}^{Q_{n}}$,
that $d(a,c)\leq d(a,b)+d(b,c)$.

We can assume that $b\neq a$, since otherwise the inequality is trivial.
Let $A=\{u\in Q_{n}\,:\, a_{u}\neq1\}$ and define $B,C$ similarly
using $b,c$, respectively. Clearly\begin{eqnarray*}
d(a,b) & = & \frac{|A\setminus B|+|\{u\in B\,:\, a_{u}\neq b_{u}\}|}{|A\cup B|}\end{eqnarray*}
and similarly for $d(b,c)$. 

Suppose first that $B\subseteq A\cup C$. We then have Since $|A\cup C|\geq|A\cup B|$,
the inequality $d(a,c)\leq d(a,b)+d(b,c)$ will follow from the inequality
for the numerators of the corresponding expressions above for $d(a,b),d(b,c)$,
i.e. from\begin{eqnarray*}
|\{u\in Q_{n}\,:\, a_{u}\neq c_{u}\}| & \leq & |A\setminus B|+|\{u\in B\,:\, a_{u}\neq b_{u}\}|\;+\\
 &  & +\;|C\setminus B|+|\{u\in B\,:\, c_{u}\neq b_{u}\}|\end{eqnarray*}
But this is clear since, for each $u\in Q_{n}$ which contributes
to the left hand side, if $u\notin B$ then $u$ contributes twice
to the right hand side (to both $|A\setminus B|$ and $|C\setminus B|$),
and if $u\in B$ it contributes at least once (since $a_{u}\neq c_{u}$
implies we can have both $a_{u}=b_{u}$ and $c_{u}=b_{u}$). 

In the general case, let $B'=B\cap(A\cup C)$. Then the analysis above
shows that\begin{eqnarray*}
d(a,c) & \leq & \frac{1}{|A\cup B'|}\left(|A\setminus B'|+|\{u\in B'\,:\, a_{u}\neq b_{u}\}|\right)\;+\\
 &  & +\;\frac{1}{|C\cup B'|}\left(|C\setminus B'|+|\{u\in B'\,:\, c_{u}\neq b_{u}\}|\right)\end{eqnarray*}
The expression for $d(a,b)+d(b,c)$ is obtained from the right hand
side by adding $|B\setminus A|$ to both numerator and denominator
of the first term on the right, and similarly adding $|B\setminus C|$
to both the numerator and denominator of the second term. This has
the effect of increasing them, and we obtain the triangle inequality. 
\end{proof}

\subsection{\label{sub:properties}The invariant}

For the sake of completeness we present a slight generalization of
the invariants given in the introduction, aimed at accommodating other
growth scales. A \emph{growth function }$\rho:[0,\infty)^{\mathbb{N}}\rightarrow\mathbb{R}$
is a function of real-valued sequences $\underline{s}=s_{1},s_{2},\ldots$
satisfying
\begin{enumerate}
\item [(a)]Tail property: If $s_{i}=t_{i}$ for all sufficiently large
$i$, then $\rho(\underline{s})=\rho(\underline{t})$;
\item [(b)]Scaled monotonicity: If $s_{i}\leq ct_{i}$ for $c>0$ then
$\rho(\underline{s})\leq\rho(\underline{t})$. 
\end{enumerate}
The principle example we shall be interested in is \[
\rho_{slow}(\underline{s})=\limsup_{n\rightarrow\infty}\frac{\log(\log s_{n})}{\log n}\]
We shall also refer to \[
\rho_{exp}(\underline{s})=\limsup_{n\rightarrow\infty}\frac{\log s_{n}}{n}\]
Given a growth function $\rho(\cdot)$ and $\mathcal{P}$, $\varepsilon$
and $A$ as above, let\begin{equation}
\rho(T,\mu,A,\mathcal{P})=\lim_{\varepsilon\searrow0}\left(\limsup_{n\rightarrow\infty}\rho\left((N(A,d_{\mathcal{P},n},\varepsilon))_{n=1}^{\infty}\right)\right)\label{eq:rho-entropy-of-partition}\end{equation}
We shall show that this is independent of $A$ (Corollary \ref{cor:covering-rho-entropy-on-diffferent-sets}),
and denote it $\rho(T,\mu,\mathcal{P})$ for short. Finally, define
the $\rho$-entropy of the action by\[
\rho(T,\mu)=\sup\rho(T,\mu,\mathcal{P})\]
where the supremum is over co-finite partitions $\mathcal{P}$.

\subsection{\label{sub:Analysis}Analysis}

We begin the analysis with some elementary facts about covering numbers.
Given a pseud-metric $d'$ on $\Omega$ it will be convenient to define
the quantities $\diam(E,d')$ and $N(A,d',\varepsilon)$ in the same
way as in the introduction, where the definitions were given for $d'=d_{\mathcal{P},n}$.
Note that $N(A,d',\varepsilon)$ is non-decreasing as $\varepsilon\searrow0$. 
\begin{lem}
\label{lem:covering-numbers-in-comparable-metrics}Let $d^{1},d^{2}$
be pseudo-metrics on $\Omega$. Let $a\geq1$ and $\delta>0$, and
suppose that $A_{0}\subseteq A$ satisfies $\mu(A_{0})>(1-\delta)\mu(A)$
and $d^{2}(x,y)\leq ad^{1}(x,y)+\delta$ for $x,y\in A_{0}$. Then
$N(A,d^{2},a\varepsilon+\delta)\leq N(A,d^{1},\varepsilon)$.\end{lem}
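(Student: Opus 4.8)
The plan is to take a cover realizing $N(A,d^{1},\varepsilon)$ and convert it into a cover at scale $a\varepsilon+\delta$ for $d^{2}$, by simply restricting every covering set to $A_{0}$, which is the only region where the comparison hypothesis $d^{2}\leq ad^{1}+\delta$ is available. No new sets are introduced, so the covering number cannot increase.

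Concretely, I would first set $N=N(A,d^{1},\varepsilon)$ and fix sets $E_{1},\ldots,E_{N}\subseteq\Omega$ with $\diam(E_{i},d^{1})\leq\varepsilon$ and $\mu(A\cap\bigcup_{i}E_{i})\geq(1-\varepsilon)\mu(A)$. Define $E_{i}'=E_{i}\cap A_{0}$. For any $x,y\in E_{i}'$ both points lie in $A_{0}$, so the hypothesis gives $d^{2}(x,y)\leq ad^{1}(x,y)+\delta\leq a\,\diam(E_{i},d^{1})+\delta\leq a\varepsilon+\delta$, whence $\diam(E_{i}',d^{2})\leq a\varepsilon+\delta$. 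Thus the diameter requirement for scale $a\varepsilon+\delta$ is met by each $E_{i}'$.

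It then remains to check that the $E_{i}'$ still cover a sufficiently large portion of $A$. Since $A_{0}\subseteq A$, we have $\mu(A\cap\bigcup_{i}E_{i}')=\mu(A_{0}\cap\bigcup_{i}E_{i})$, and subtracting the contribution of $A\setminus A_{0}$ gives $\mu(A_{0}\cap\bigcup_{i}E_{i})\geq\mu(A\cap\bigcup_{i}E_{i})-\mu(A\setminus A_{0})\geq(1-\varepsilon)\mu(A)-\delta\mu(A)$, using $\mu(A_{0})>(1-\delta)\mu(A)$. The only point requiring any care, and the only place the assumption $a\geq1$ is used, is to confirm this covered fraction clears the threshold for scale $a\varepsilon+\delta$: since $a\geq1$ yields $a\varepsilon+\delta\geq\varepsilon+\delta$, we get $(1-\varepsilon-\delta)\mu(A)\geq(1-(a\varepsilon+\delta))\mu(A)$. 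Hence $E_{1}',\ldots,E_{N}'$ form an admissible cover at scale $a\varepsilon+\delta$, giving $N(A,d^{2},a\varepsilon+\delta)\leq N=N(A,d^{1},\varepsilon)$.

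I do not expect a genuine obstacle here; the statement is a direct bookkeeping of the measure lost in passing from $A$ to $A_{0}$, with the factor $a\geq1$ chosen precisely to absorb that loss into the enlarged scale. The argument makes no use of ergodicity or of the specific form of the metrics $d_{\mathcal{P},n}$, which is what makes this lemma a convenient general tool for later comparisons.
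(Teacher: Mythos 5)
Your proof is correct and is essentially identical to the paper's: both restrict the optimal $d^{1}$-cover to $A_{0}$, use the comparison hypothesis to bound the $d^{2}$-diameters by $a\varepsilon+\delta$, and use $a\geq1$ to absorb the measure loss $\mu(A\setminus A_{0})<\delta\mu(A)$ into the enlarged threshold $(a\varepsilon+\delta)\mu(A)$. No further comment is needed.
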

\begin{proof}
Let $m=N(A,d^{1},\varepsilon)$ and let $E_{1},\ldots,E_{m}$ be an
optimal $(\varepsilon,d^{1})$-cover of $A$. Set $E'_{i}=E_{i}\cap A_{0}$,
so that \[
\mu(A\setminus\bigcup_{i=1}^{m}E'_{i})\leq\mu(A\setminus\bigcup_{i=1}^{m}E_{i})+\mu(A\setminus A_{0})\leq(\varepsilon+\delta)\mu(A)\leq(a\varepsilon+\delta)\mu(A)\]
Since $E'_{i}\subseteq A_{0}$, and using the inequality between $d^{2}$and
$d^{1}$, we have \[
\diam(E'_{i},d^{2})\leq a\diam(E'_{i},d^{1})+\delta\leq a\diam(E_{i},d^{1})+\delta<a\varepsilon+\delta\]
Therefore, $E'_{1},\ldots,E'_{m}$ is a $(d^{2},a\varepsilon+\delta)$-almost
cover of $A$, so $N(A,d^{2},a\varepsilon+\delta)\leq m=N(A,d^{1},\varepsilon)$.
\end{proof}
We now turn to the analysis of $\rho$-entropy. Recall that a partition
$\mathcal{R}$ refines a partition $\mathcal{P}$ if every atom of
$\mathcal{R}$ is a subset of an atom of $\mathcal{R}$.
\begin{lem}
\label{lem:covering-numbers-of-refining-partitions}If $\mathcal{R}$
refines $\mathcal{P}$ then $N(A,d_{\mathcal{P},n},\varepsilon)\leq N(A,d_{\mathcal{R},n},\varepsilon)$
and consequently $\rho(T,\mu,A,\mathcal{P})\leq\rho(T,\mu,A,\mathcal{R})$.\end{lem}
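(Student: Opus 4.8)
The plan is to compare the two pseudo-metrics pointwise as far as possible and then pass to covering numbers via Lemma~\ref{lem:covering-numbers-in-comparable-metrics}. For a pair $x,y$ and a fixed $n$ write $D_{\mathcal{P}}=\{u\in Q_{n}:T^{u}x,T^{u}y\text{ lie in different }\mathcal{P}\text{-atoms}\}$ and $V_{\mathcal{P}}=\{u\in Q_{n}:T^{u}x\text{ or }T^{u}y\text{ lies in }\core(\mathcal{P})\}$, so that $d_{\mathcal{P},n}(x,y)=|D_{\mathcal{P}}|/|V_{\mathcal{P}}|$, and similarly for $\mathcal{R}$. Refinement has two competing effects. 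The numerator can only grow: if $T^{u}x,T^{u}y$ lie in distinct $\mathcal{P}$-atoms then, since distinct $\mathcal{P}$-atoms are disjoint and each $\mathcal{R}$-atom sits inside one $\mathcal{P}$-atom, they lie in distinct $\mathcal{R}$-atoms, whence $D_{\mathcal{P}}\subseteq D_{\mathcal{R}}$. But the core can also grow: the infinite $\mathcal{R}$-atom $R_{1}$ is contained in the infinite $\mathcal{P}$-atom $P_{1}$, so $P_{1}\setminus R_{1}$ is a union of finite $\mathcal{R}$-atoms that join the core, giving $\core(\mathcal{P})\subseteq\core(\mathcal{R})$ and $V_{\mathcal{P}}\subseteq V_{\mathcal{R}}$.

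First I would dispose of the case in which $\mathcal{R}$ leaves the infinite atom intact, so the two cores coincide and $V_{\mathcal{P}}=V_{\mathcal{R}}$. Then $D_{\mathcal{P}}\subseteq D_{\mathcal{R}}$ gives $d_{\mathcal{P},n}(x,y)\le d_{\mathcal{R},n}(x,y)$ for all $x,y$, so any $(\varepsilon,d_{\mathcal{R},n})$-almost cover of $A$ is also an $(\varepsilon,d_{\mathcal{P},n})$-almost cover and $N(A,d_{\mathcal{P},n},\varepsilon)\le N(A,d_{\mathcal{R},n},\varepsilon)$ is immediate. This is the same mechanism that yields the inequality for f.m.p.\ actions, where the core is all of $\Omega$ and the denominator is the constant $|Q_{n}|$.

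The substantive case is when $\mathcal{R}$ splits $P_{1}$: then $|V_{\mathcal{R}}|$ may strictly exceed $|V_{\mathcal{P}}|$, the extra core visits dilute $d_{\mathcal{R},n}$, and the pointwise inequality $d_{\mathcal{P},n}\le d_{\mathcal{R},n}$ can genuinely fail. The main obstacle is therefore to control the ratio of normalizations $|V_{\mathcal{R}}|/|V_{\mathcal{P}}|$, and this is exactly where the ratio ergodic theorem enters. Applying Theorem~\ref{thm:ratio-ergodic-thm} with $f=1_{\core(\mathcal{R})}$ and $g=1_{\core(\mathcal{P})}$ gives, for a.e.\ $x$, that $|R_{n}(\core(\mathcal{R}),x)|/|R_{n}(\core(\mathcal{P}),x)|\to\mu(\core(\mathcal{R}))/\mu(\core(\mathcal{P}))$. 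Since $|V_{\mathcal{P}}|$ and $|V_{\mathcal{R}}|$ are squeezed between the single-point counts for $x$ and $y$ and their sums, a uniform-convergence (Egorov) argument produces, for each $\delta>0$, a set $A_{0}\subseteq A$ with $\mu(A_{0})>(1-\delta)\mu(A)$ and an $n_{0}$ such that $|V_{\mathcal{R}}(x,y)|\le c\,|V_{\mathcal{P}}(x,y)|$ for all $x,y\in A_{0}$ and $n\ge n_{0}$, where $c=c(\mathcal{P},\mathcal{R})$ depends only on the two core masses. Combined with $|D_{\mathcal{P}}|\le|D_{\mathcal{R}}|$ this yields $d_{\mathcal{P},n}(x,y)\le c\,d_{\mathcal{R},n}(x,y)$ on $A_{0}$.

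Finally I would feed this into Lemma~\ref{lem:covering-numbers-in-comparable-metrics} with $d^{1}=d_{\mathcal{R},n}$, $d^{2}=d_{\mathcal{P},n}$ and $a=c$, obtaining $N(A,d_{\mathcal{P},n},c\varepsilon+\delta)\le N(A,d_{\mathcal{R},n},\varepsilon)$ for $n\ge n_{0}$. Inserting this into the definition \eqref{eq:rho-entropy-of-partition}, the scaled-monotonicity property (b) of the growth function absorbs the factor $c$, and after taking $\limsup_{n}$, then $\delta\searrow0$ and $\varepsilon\searrow0$, the additive error disappears; this gives $\rho(T,\mu,A,\mathcal{P})\le\rho(T,\mu,A,\mathcal{R})$, the conclusion of the lemma. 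The one delicate point throughout—and the reason the statement is not purely formal in infinite measure—is the denominator comparison: refining the partition changes the normalization of the Hamming metric, and it is only the ratio ergodic theorem that keeps the two normalizations within a bounded factor.
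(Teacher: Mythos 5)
Your proof is correct where it matters, and it takes a genuinely different route from the paper's; the difference is substantive. The paper argues that $d_{\mathcal{P},n}(x,y)\le d_{\mathcal{R},n}(x,y)$ holds \emph{pointwise}, by interpolating the partition $\mathcal{S}=\{P_{1}\}\cup\{R\in\mathcal{R}:R\cap P_{1}=\emptyset\}$, which has the same core as $\mathcal{P}$, and then asserting $d_{\mathcal{S},n}\le d_{\mathcal{R},n}$ on the grounds that every index entering the $\mathcal{R}$-numerator but not the $\mathcal{S}$-numerator also enters the $\mathcal{R}$-denominator but not the $\mathcal{S}$-denominator. You are right to distrust the pointwise inequality when $R_{1}\subsetneq P_{1}$: that accounting ignores the indices $u$ at which $T^{u}x$ and $T^{u}y$ lie in the \emph{same} finite $\mathcal{R}$-atom inside $P_{1}$, which enlarge the $\mathcal{R}$-denominator without touching the numerator and can drive $d_{\mathcal{R},n}$ strictly below $d_{\mathcal{P},n}$ (one index where the $\mathcal{P}$-names differ plus many indices where both orbits sit in the same new core atom gives $d_{\mathcal{P},n}=1$ and $d_{\mathcal{R},n}$ as small as you like). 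Your two-case split handles this correctly: when the infinite atom is untouched the cores agree and the fixed-$\varepsilon$ covering-number inequality is immediate; when it is split you control the ratio of denominators $|V_{\mathcal{R}}|/|V_{\mathcal{P}}|$ by a constant $c$ on a large subset of $A$ for large $n$ via the ratio ergodic theorem, feed $d_{\mathcal{P},n}\le c\,d_{\mathcal{R},n}$ into Lemma~\ref{lem:covering-numbers-in-comparable-metrics}, and let $\varepsilon,\delta\searrow0$. Two remarks. First, in the second case you prove $N(A,d_{\mathcal{P},n},c\varepsilon+\delta)\le N(A,d_{\mathcal{R},n},\varepsilon)$ rather than the stated fixed-$\varepsilon$ inequality; this weaker form still yields the $\rho$-conclusion, which is all that is used in Lemmas~\ref{lem:covering-numbers-of-generating-partitions} and \ref{lem:slow-core-for-non-pure-actions}, and the one invocation of the fixed-$\varepsilon$ form (the refinement $\mathcal{R}$ of $\mathcal{P}$ in the proof of Theorem~\ref{thm:large-s-implies-no-diff-model}, where $R_{1}=P_{1}$) falls under your equal-core case. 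Second, it is the limit $\varepsilon\searrow0$ in \eqref{eq:rho-entropy-of-partition}, not the scaled-monotonicity property of the growth function, that absorbs the constant $c$, since $c$ multiplies the radius rather than the covering numbers; that is a cosmetic slip only.
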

\begin{proof}
By the previous lemma it suffices to show that $d_{\mathcal{P},n}(x,y)\leq d_{\mathcal{R},n}(x,y)$
for all $x,y\in\Omega$. Consider the intermediate partition \[
\mathcal{S}=\{P_{1},R\,:\, R\in\mathcal{R}\mbox{ and }R\cap P_{1}=\emptyset\}\]
Note that $\mathcal{S}$ refines $\mathcal{P}$. We first claim that
\[
d_{\mathcal{P},n}(x,y)\leq d_{\mathcal{S},n}(x,y)\]
Indeed, in the definition of these quantities, the expressions in
the denominator agree for $\mathcal{P}$ and $\mathcal{S}$ since
they have the same core. On the other hand the numerator corresponding
to $\mathcal{P}$ is no greater than that of $\mathcal{S}$, because
any $u\in Q_{n}$ which satisfies $\mathcal{S}(T^{u}x)=\mathcal{S}(T^{u}y)$
then it certainly also satisfies $\mathcal{P}(T^{u}x)=\mathcal{P}(T^{u}y)$,
since $\mathcal{S}$ refines $\mathcal{P}$. It now remains to show
that\[
d_{\mathcal{S},n}(x,y)\leq d_{\mathcal{R},n}(x,y)\]
This follows from the fact that, when one compares the ratio defining
the left and right hand sides, one finds that each $u\in Q_{n}$ which
contributes to the numerator of the right hand side but not the left,
also contributes the same amount to the denominator of the right hand
side, but not the left. 

The last statement of the lemma is immediate from the definitions
and monotonicity of $N(A,d_{\mathcal{P},n},\varepsilon)$ in $\varepsilon$.
\end{proof}
Recall that for $F\subseteq\mathbb{Z}^{k}$, the $F$-refinement of
$\mathcal{P}$ is the partition $\mathcal{P}^{F}=\bigvee_{u\in F}T^{u}\mathcal{P}$,
where $T^{u}\mathcal{P}=\{T^{u}P\,:\, P\in\mathcal{P}\}$. This is
the coarsest partition which refines $T^{u}\mathcal{P}$ for all $u\in F$.
Note that $\mathcal{P}^{F}$ is co-finite if $\mathcal{P}$ is.
\begin{lem}
\label{lem:covering-number-of-P-to-the-F}If $0\in F\subseteq\mathbb{Z}^{k}$
is finite and $\mathcal{R}=\mathcal{P}^{F}$, then $N(A,d_{\mathcal{R},n},\varepsilon)\leq N(A,d_{\mathcal{P},n},\varepsilon/|F|)$,
and in particular $\rho(T,\mu,A,\mathcal{P})=\rho(T,\mu,A,\mathcal{R})$.\end{lem}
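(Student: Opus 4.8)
The plan is to reduce the covering-number bound to a pointwise comparison of the two pseudo-metrics and then invoke Lemma \ref{lem:covering-numbers-in-comparable-metrics}. We may assume the core of $\mathcal{P}$ has positive measure, the trivial-core case being immediate. Fix $x,y$ and write $p_w=\mathcal{P}(T^wx)$, $q_w=\mathcal{P}(T^wy)$ for the two $\mathcal{P}$-colorings of the orbit; let $S=\{w:p_w\neq1\text{ or }q_w\neq1\}$ be the set of core positions and $\Delta=\{w:p_w\neq q_w\}\subseteq S$ the set of disagreement positions. Since an atom of $\mathcal{R}=\mathcal{P}^F$ records the $\mathcal{P}$-colors on a translate of $F$, one checks that $\mathcal{R}(T^ux)\neq\mathcal{R}(T^uy)$ exactly when $u+v\in\Delta$ for some $v\in F$, and that $T^ux$ or $T^uy$ lies in the core of $\mathcal{R}$ exactly when $u+v\in S$ for some $v\in F$. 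Writing $E-F=\{w-v:w\in E,\,v\in F\}$, the two pseudo-metrics are therefore
\[
d_{\mathcal{P},n}(x,y)=\frac{|\Delta\cap Q_n|}{|S\cap Q_n|},\qquad d_{\mathcal{R},n}(x,y)=\frac{|Q_n\cap(\Delta-F)|}{|Q_n\cap(S-F)|}.
\]

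Two combinatorial facts drive the comparison. First, since $0\in F$ we have $S-F\supseteq S$, so $|Q_n\cap(S-F)|\ge|S\cap Q_n|$: the denominator only grows under the refinement. Second, each $w\in\Delta$ equals $u+v$ with $u\in Q_n$, $v\in F$ for at most $|F|$ values of $u$, whence $|Q_n\cap(\Delta-F)|\le|F|\cdot|\Delta\cap(Q_n+F)|\le|F|(|\Delta\cap Q_n|+\mathcal{E}_n)$, where the shell term is $\mathcal{E}_n=|\Delta\cap(Q_{n+R}\setminus Q_n)|\le|S\cap(Q_{n+R}\setminus Q_n)|$ and $R=\max_{v\in F}\|v\|_\infty$. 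Combining these gives the pointwise estimate $d_{\mathcal{R},n}(x,y)\le|F|\,d_{\mathcal{P},n}(x,y)+|F|\,\mathcal{E}_n/|S\cap Q_n|$, so everything reduces to showing that the relative shell term $\mathcal{E}_n/|S\cap Q_n|$ is small.

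This is the main obstacle, and it is where the infinite-measure dynamics is indispensable: a priori the core visits recorded by $S$ could concentrate in the thin shell $Q_{n+R}\setminus Q_n$, and indeed the naive pointwise inequality $d_{\mathcal{R},n}\le|F|\,d_{\mathcal{P},n}$ is genuinely false for individual pairs. To rule this out I would apply the Chacon--Ornstein lemma (Theorem \ref{thm:Chacon-Ornstein}) to $f=1_C$, where $C$ is the core of $\mathcal{P}$: for a.e.\ point the number of core visits in one shell is asymptotically negligible against the number in the preceding box, and summing over the $R$ shells between $Q_n$ and $Q_{n+R}$ yields $|S_x\cap(Q_{n+R}\setminus Q_n)|/|S_x\cap Q_n|\to0$ a.e., where $S_x=\{w:T^wx\in C\}$. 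Because $S\subseteq S_x\cup S_y$ forces $|S\cap Q_n|\ge\max(|S_x\cap Q_n|,|S_y\cap Q_n|)$, the pair error $\mathcal{E}_n/|S\cap Q_n|$ is dominated by the sum of the two single-point errors and hence tends to $0$ for a.e.\ pair $(x,y)$.

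It remains to upgrade this a.e.\ statement to a uniform one usable at a fixed large $n$. Since $A$ has finite measure, Egorov's theorem provides, for each $\delta>0$, a subset $A_0\subseteq A$ with $\mu(A_0)>(1-\delta)\mu(A)$ on which the single-point errors tend to $0$ uniformly; then for all large $n$ and all $x,y\in A_0$ we obtain $d_{\mathcal{R},n}(x,y)\le|F|\,d_{\mathcal{P},n}(x,y)+\delta$. Lemma \ref{lem:covering-numbers-in-comparable-metrics} (with $a=|F|$) now gives $N(A,d_{\mathcal{R},n},|F|\varepsilon+\delta)\le N(A,d_{\mathcal{P},n},\varepsilon)$ for all large $n$, which is the asserted bound up to the harmless additive slack $\delta$ in the radius. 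Applying the growth function and $\limsup_n$ (using the tail property and scaled monotonicity) and letting $\varepsilon,\delta\searrow0$ yields $\rho(T,\mu,A,\mathcal{R})\le\rho(T,\mu,A,\mathcal{P})$; combined with the reverse inequality from Lemma \ref{lem:covering-numbers-of-refining-partitions} (valid because $\mathcal{R}$ refines $\mathcal{P}$), this gives the claimed equality of $\rho$-entropies.
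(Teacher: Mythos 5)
Your proof is correct, and it follows the same skeleton as the paper's: reduce to a pointwise comparison of $d_{\mathcal{R},n}$ with $|F|\cdot d_{\mathcal{P},n}$, feed that into Lemma \ref{lem:covering-numbers-in-comparable-metrics}, and get the reverse entropy inequality from Lemma \ref{lem:covering-numbers-of-refining-partitions}. The difference is in how the boundary is treated. The paper introduces $D,U\subseteq Q_n$ (disagreement and core-visit sets for $\mathcal{P}$) and their analogues $D',U'$ for $\mathcal{R}$, asserts $|D'|\leq|F|\cdot|D|$ and $|U'|\geq|U|$, and concludes the clean pointwise bound $d_{\mathcal{R},n}(x,y)\leq|F|\cdot d_{\mathcal{P},n}(x,y)$ for all $x,y$, hence the covering-number inequality exactly as stated. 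This silently discards the contribution of $\mathcal{P}$-disagreements located in the shell $Q_{n+R}\setminus Q_n$ (with $R=\max_{v\in F}\|v\|_{\infty}$) that the refinement pulls into $Q_n$; as you observe, the clean pointwise inequality can genuinely fail for individual pairs because of exactly these terms, so your extra step is not pedantry. Your repair --- bounding the shell term by core visits, killing it a.e.\ via the Chacon--Ornstein lemma (Theorem \ref{thm:Chacon-Ornstein}) applied to the indicator of the core, and upgrading to a uniform statement on a large subset $A_0\subseteq A$ by Egorov --- costs you an additive slack $\delta$ in the radius and a ``for all large $n$'' qualifier, so you prove a slightly weaker covering-number inequality than the one in the lemma's statement. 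That weakening is harmless for the only thing the lemma is used for (the equality $\rho(T,\mu,A,\mathcal{P})=\rho(T,\mu,A,\mathcal{R})$, obtained by letting $\varepsilon,\delta\searrow0$), and in exchange your argument is watertight where the paper's is slightly lacunary. The one cosmetic caveat is a sign convention: whether the refinement shifts disagreements by $+F$ or $-F$ depends on the convention for $T^{u}\mathcal{P}$, but since $0\in F$ if and only if $0\in -F$, your argument is unaffected.
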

\begin{proof}
By Lemma \ref{lem:covering-numbers-in-comparable-metrics}, it suffices
to show that $d_{\mathcal{R},n}(x,y)\leq|F|\cdot d_{\mathcal{P},n}(x,y)$
for all $x,y\in\Omega$. Write \[
U=\{u\in Q_{n}\,:\, T^{u}x\notin P_{1}\mbox{ or }T^{u}y\notin P_{1}\}\]
for the union of the times in $Q_{n}$ at which $x$ or $y$ visit
the core of $\mathcal{P}$. Denote the set of $u\in Q_{n}$ where
the $(\mathcal{P},n)$-names differ by \[
D=\{u\in Q_{n}\,:\,\mathcal{P}(T^{u}x)\neq\mathcal{P}(T^{u}y)\}\]
By definition $d_{\mathcal{P},n}(x,y)=|D|/|U|$. Let $U',D'$ be defined
similarly using $\mathcal{R}$ instead of $\mathcal{P}$ (recall that
$R_{1}\in\mathcal{R}$ is the infinite atom of $\mathcal{R}$). Notice
that $D'=(D+Q_{n})\cap Q_{n}$ and $U'=(U+Q_{n})\cap Q_{n}$, where
$U+Q_{n}=\{u+v\,:\, u\in U\,,\, v\in Q_{n}\}$ and similarly for the
other expression. We clearly have $|D'|\leq|F|\cdot|D|$, and since
$0\in F$ we also have $|U'|\geq|U|$. Thus\[
d_{\mathcal{R},n}(x,y)=\frac{|D'|}{|U'|}\leq|F|\cdot\frac{|D|}{|U|}=|F|\cdot d_{\mathcal{P},n}(x,y)\]
as desired. 

For the second part of the lemma, notice that the first part implies
that $\rho(T,\mu,A,\mathcal{P})\geq\rho(T,\mu,A,\mathcal{R})$, while
the reverse inequality was proved in the previous lemma.
\end{proof}
For co-finite partitions $\mathcal{P}=\{P_{1},\ldots,P_{r}\}$ and
$\mathcal{R}=\{R_{1},\ldots,R_{r}\}$ of the same size, define the
distance \[
\Delta(\mathcal{P},\mathcal{R})=\frac{\mu(\bigcup_{i=1}^{r}(P_{i}\Delta R_{i}))}{\mu(\bigcup_{i=2}^{r}(P_{i}\cup Q_{i}))}\]
This is finite when $\mathcal{P},\mathcal{R}$ are co-finite (we continue
to assume that $P_{1},R_{1}$ are the infinite atoms). One can show
that this is a metric in a similar manner to the proof of Lemma \ref{lem:pseudo-metric-verification}
but we shall not need this.
\begin{lem}
\label{lem:covering-numbers-of-L1-close-partitions}If $\mathcal{P}=\{P_{1},\ldots,P_{r}\},\mathcal{R}=\{R_{1},\ldots,R_{r}\}$
are co-finite partitions and $\Delta(\mathcal{P},\mathcal{R})<\varepsilon$,
then $N(A,d_{\mathcal{P},n},6\varepsilon)\leq N(A,d_{\mathcal{R},n},\varepsilon)$
for sufficiently large $n$, and visa versa. In particular, $\rho(T,\mu,A,\mathcal{P})\leq\rho(T,\mu,A,\mathcal{R})$
and visa versa.\end{lem}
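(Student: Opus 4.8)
The plan is to reduce the statement to the metric-comparison Lemma \ref{lem:covering-numbers-in-comparable-metrics} by showing that, off a set of small measure, the pseudo-metrics $d_{\mathcal{P},n}$ and $d_{\mathcal{R},n}$ agree up to a multiplicative constant and an additive error proportional to $\varepsilon$. Write $G=\bigcup_{i}(P_{i}\Delta R_{i})$ for the set where the two partitions disagree, so that $x\in G$ iff $\mathcal{P}(x)\neq\mathcal{R}(x)$, and write $V=\bigcup_{i\geq2}(P_{i}\cup R_{i})$ for the union of the two cores. Then $G\subseteq V$, and the definition of $\Delta$ says precisely that $\mu(G)/\mu(V)=\Delta(\mathcal{P},\mathcal{R})<\varepsilon$. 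For fixed $x,y$ and $n$ I would set $B=\{u\in Q_{n}:T^{u}x\in G\text{ or }T^{u}y\in G\}$, the set of ``bad'' times.

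The first key step is the combinatorial observation that the two pseudo-metrics are built from the same data except at the bad times. Writing $D_{\mathcal{P}},U_{\mathcal{P}}$ for the numerator and denominator index-sets of $d_{\mathcal{P},n}(x,y)$, and likewise $D_{\mathcal{R}},U_{\mathcal{R}}$, I note that for $u\notin B$ one has $T^{u}x,T^{u}y\notin G$, so the $\mathcal{P}$-names and core-memberships of $T^{u}x,T^{u}y$ coincide with their $\mathcal{R}$-counterparts; hence $D_{\mathcal{P}}\setminus B=D_{\mathcal{R}}\setminus B$ and $U_{\mathcal{P}}\setminus B=U_{\mathcal{R}}\setminus B$. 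Setting $W=U_{\mathcal{P}}\cup U_{\mathcal{R}}$ this yields $|D_{\mathcal{P}}|\leq|D_{\mathcal{R}}|+|B|$, $|U_{\mathcal{P}}|\geq|W\setminus B|\geq|W|-|B|$, and also $|D_{\mathcal{R}}|=d_{\mathcal{R},n}(x,y)|U_{\mathcal{R}}|\leq d_{\mathcal{R},n}(x,y)|W|$, so that $d_{\mathcal{P},n}(x,y)\leq(|D_{\mathcal{R}}|+|B|)/(|W|-|B|)$.

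The second step is to bound $|B|/|W|$ using the ratio ergodic theorem. Applying Theorem \ref{thm:ratio-ergodic-thm} with $f=1_{G}$, $g=1_{V}$ gives, for a.e.\ $x$, that $|\{u\in Q_{n}:T^{u}x\in G\}|/|\{u\in Q_{n}:T^{u}x\in V\}|\to\mu(G)/\mu(V)<\varepsilon$. Since $|B|$ is at most the sum of the two individual $G$-counts for $x$ and $y$, while $|W|$ is at least the larger, hence at least half the sum, of the two $V$-counts, a mediant estimate gives $|B|/|W|<2\varepsilon$ as soon as both individual ratios are below $\varepsilon$. The main obstacle is uniformity: the displayed convergence holds for each $x$ only beyond an $x$-dependent threshold, whereas Lemma \ref{lem:covering-numbers-in-comparable-metrics} requires the metric inequality to hold simultaneously for all pairs in a set $A_{0}$ of large measure. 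I would resolve this with Egorov's theorem on the finite-measure set $A$: the convergence is uniform off a set $A_{0}\subseteq A$ with $\mu(A_{0})>(1-4\varepsilon)\mu(A)$, producing a single threshold $N_{0}$ such that $|B|/|W|<2\varepsilon$ for every pair $x,y\in A_{0}$ and every $n\geq N_{0}$ (the threshold is common to all points, so pairs are automatically covered).

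Combining the two steps, for $x,y\in A_{0}$ and $n\geq N_{0}$ I obtain $d_{\mathcal{P},n}(x,y)\leq(d_{\mathcal{R},n}(x,y)+2\varepsilon)/(1-2\varepsilon)\leq2\,d_{\mathcal{R},n}(x,y)+4\varepsilon$ for $\varepsilon$ small. Lemma \ref{lem:covering-numbers-in-comparable-metrics} with $a=2$, $\delta=4\varepsilon$ and this $A_{0}$ then gives $N(A,d_{\mathcal{P},n},6\varepsilon)\leq N(A,d_{\mathcal{R},n},\varepsilon)$ for all $n\geq N_{0}$, which is the asserted bound for sufficiently large $n$; the reverse inequality follows by exchanging the roles of $\mathcal{P}$ and $\mathcal{R}$, using that $\Delta$ is symmetric and that $G,V$ are unchanged under the swap. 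Finally, the ``in particular'' claim follows by inserting this covering-number inequality into the definition \eqref{eq:rho-entropy-of-partition}: by the tail and scaled-monotonicity properties of the growth function, $\limsup_{n}\rho\big((N(A,d_{\mathcal{P},n},6\varepsilon))_{n}\big)\leq\limsup_{n}\rho\big((N(A,d_{\mathcal{R},n},\varepsilon))_{n}\big)$, and letting $\varepsilon\searrow0$ (so $6\varepsilon\searrow0$ as well) yields $\rho(T,\mu,A,\mathcal{P})\leq\rho(T,\mu,A,\mathcal{R})$, with the reverse inequality giving equality.
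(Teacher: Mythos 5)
Your proof is correct and follows essentially the same route as the paper's: both apply the ratio ergodic theorem to $1_{\bigcup_i(P_i\Delta R_i)}$ against $1_{\bigcup_{i\geq2}(P_i\cup R_i)}$ to obtain, on a large-measure subset of $A$ and for all large $n$, a uniform bound on the frequency of ``bad'' times, and then feed the resulting comparison of $d_{\mathcal{P},n}$ and $d_{\mathcal{R},n}$ into Lemma \ref{lem:covering-numbers-in-comparable-metrics}. The only (cosmetic) difference is the intermediate combinatorial step: the paper bounds $d(x_{\mathcal{P},n},x_{\mathcal{R},n})<2\varepsilon$ for each single point and invokes the triangle inequality for the name metric from Lemma \ref{lem:pseudo-metric-verification} to get $d_{\mathcal{P},n}(x,y)\leq d_{\mathcal{R},n}(x,y)+4\varepsilon$, whereas you count the bad times of the pair directly and get $d_{\mathcal{P},n}(x,y)\leq 2\,d_{\mathcal{R},n}(x,y)+4\varepsilon$ --- both land within the stated $6\varepsilon$.
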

\begin{proof}
By the ratio ergodic theorem, there is an $n_{0}$ and a set $A'\subseteq A$
with $\mu(A')>(1-\varepsilon)\mu(A)$, such that for $n>n_{0}$ and
$x\in A'$, \[
\frac{|\{u\in Q_{n}\,:\, T^{u}x\in\bigcup_{i=1}^{r}(P_{i}\Delta R_{i})\}|}{|\{u\in Q_{n}\,:\, T^{u}x\in\bigcup_{i=2}^{r}(P_{i}\cup R_{i})\}||}<\varepsilon\]
which easily implies \[
d(x_{\mathcal{P},n},x_{\mathcal{R},n})\leq\frac{\varepsilon}{1-\varepsilon}<2\varepsilon\]
It follows that\[
d_{\mathcal{P},n}(x,y)\leq d(x_{\mathcal{P},n},x_{\mathcal{R},n})+d(x_{\mathcal{R},n},y_{\mathcal{R},n})+d(y_{\mathcal{R},n},y_{\mathcal{P},n})\leq d_{\mathcal{R},n}(x,y)+4\varepsilon\]
The inequality $N(A,d_{\mathcal{P},n},6\varepsilon)\leq N(A,d_{\mathcal{R},n},\varepsilon)$
now follows from Lemma \ref{lem:covering-numbers-in-comparable-metrics},
and the last claim by applying $\rho(\cdot)$ and using monotonicity
in $\varepsilon$.
\end{proof}
Write $\sigma(\mathcal{P})$ for the smallest sub-$\sigma$-algebra
in $\mathcal{B}$ with respect to which $\mathcal{P}$ is measurable.
\begin{lem}
\label{lem:covering-numbers-of-generating-partitions}If $\mathcal{P}_{n}$
is a refining sequence of co-finite partitions such that $\sigma(\bigvee_{n=1}^{\infty}\mathcal{P}_{n}^{\mathbb{Z}^{k}})=\mathcal{B}\bmod\mu$,
then $\sup_{\mathcal{R}}\rho(T,\mu,A,\mathcal{R})=\sup_{n}\rho(T,\mu,A,\mathcal{P}_{n})$.
In particular if $\mathcal{P}$ generates then $\sup_{\mathcal{R}}\rho(T,\mu,A,\mathcal{R})=\rho(T,\mu,A,\mathcal{P})$\end{lem}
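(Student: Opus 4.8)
The plan is to prove the two inequalities in $\sup_{\mathcal{R}}\rho(T,\mu,A,\mathcal{R})=\sup_{n}\rho(T,\mu,A,\mathcal{P}_{n})$ separately. The inequality $\geq$ is immediate, since each $\mathcal{P}_{n}$ is itself one of the co-finite partitions $\mathcal{R}$ over which the left-hand supremum is taken. All the work is in the reverse inequality, for which it suffices to fix an arbitrary co-finite partition $\mathcal{R}=\{R_{1},\ldots,R_{r}\}$ (with $R_{1}$ the infinite atom) and show that $\rho(T,\mu,A,\mathcal{R})\leq\sup_{n}\rho(T,\mu,A,\mathcal{P}_{n})$.

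The strategy is approximation: I will show that for every $\varepsilon>0$ the partition $\mathcal{R}$ can be approximated to within $\Delta$-distance $\varepsilon$ by a co-finite partition $\mathcal{R}'$ that is measurable with respect to $\mathcal{P}_{m}^{F}$ for some index $m$ and some finite $F\ni0$. Granting this, the conclusion follows by chaining the three comparison lemmas already established. Since $\Delta(\mathcal{R},\mathcal{R}')<\varepsilon$, Lemma \ref{lem:covering-numbers-of-L1-close-partitions} gives $N(A,d_{\mathcal{R},n},6\varepsilon)\leq N(A,d_{\mathcal{R}',n},\varepsilon)$ for all large $n$; applying the growth function $\rho$ together with its tail and scaled-monotonicity properties, and then bounding by the $\varepsilon\searrow0$ limit, yields $\limsup_{n}\rho((N(A,d_{\mathcal{R},n},6\varepsilon))_{n})\leq\rho(T,\mu,A,\mathcal{R}')$. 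Because $\mathcal{R}'$ is a coarsening of $\mathcal{P}_{m}^{F}$, Lemma \ref{lem:covering-numbers-of-refining-partitions} gives $\rho(T,\mu,A,\mathcal{R}')\leq\rho(T,\mu,A,\mathcal{P}_{m}^{F})$, while Lemma \ref{lem:covering-number-of-P-to-the-F} (using $0\in F$) identifies $\rho(T,\mu,A,\mathcal{P}_{m}^{F})=\rho(T,\mu,A,\mathcal{P}_{m})\leq\sup_{n}\rho(T,\mu,A,\mathcal{P}_{n})$. Combining these, $\limsup_{n}\rho((N(A,d_{\mathcal{R},n},6\varepsilon))_{n})\leq\sup_{n}\rho(T,\mu,A,\mathcal{P}_{n})$ for every $\varepsilon$, and letting $\varepsilon\searrow0$ gives $\rho(T,\mu,A,\mathcal{R})\leq\sup_{n}\rho(T,\mu,A,\mathcal{P}_{n})$. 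It is essential here that the approximant $\mathcal{R}'$ is allowed to depend on $\varepsilon$.

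The main obstacle is the approximation step itself, which is delicate because of the co-finite bookkeeping. The family $\bigcup_{m,F}\sigma(\mathcal{P}_{m}^{F})$ is an increasing (hence algebra) family of $\sigma$-algebras generating $\mathcal{B}\bmod\mu$, so each finite-measure atom $R_{i}$ ($i\geq2$) can be approximated in symmetric-difference measure by a set $S_{i}$ measurable with respect to some $\mathcal{P}_{m}^{F}$. The difficulty is that a priori $S_{i}$ may have large, or even infinite, intersection with the infinite atom of $\mathcal{P}_{m}^{F}$, so that discarding this part (to make $S_{i}$ land in the core and have finite measure) could spoil the approximation. To control this I will use ergodicity: the infinite atom of $\mathcal{P}_{m}^{F}=\bigvee_{u\in F}T^{u}\mathcal{P}_{m}$ equals $\bigcap_{u\in F}T^{u}P_{1}^{(m)}$, where $P_{1}^{(m)}$ is the infinite atom of $\mathcal{P}_{m}$; this decreases as $F\nearrow\mathbb{Z}^{k}$ to the invariant set $\bigcap_{u\in\mathbb{Z}^{k}}T^{u}P_{1}^{(m)}$, whose complement contains $\core(\mathcal{P}_{m})$ and hence has positive measure, so it is null by ergodicity. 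Consequently $\mu(R_{i}\cap\bigcap_{u\in F}T^{u}P_{1}^{(m)})\to0$ as $F\nearrow\mathbb{Z}^{k}$ (using finiteness of $\mu(R_{i})$ and continuity from above), so after enlarging $F$ one may replace $S_{i}$ by $S_{i}\cap\core(\mathcal{P}_{m}^{F})$ with only a small further error. Disjointifying the resulting finite-measure sets and setting $R_{1}'=\Omega\setminus\bigcup_{i\geq2}S_{i}$ produces the desired co-finite, $\mathcal{P}_{m}^{F}$-measurable partition $\mathcal{R}'$ with $\Delta(\mathcal{R},\mathcal{R}')$ as small as wished.

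Finally, the ``in particular'' clause follows by applying the general statement to the constant sequence $\mathcal{P}_{n}=\mathcal{P}$, which is trivially refining and whose join of $\mathbb{Z}^{k}$-translates generates $\mathcal{B}\bmod\mu$ by hypothesis; then $\sup_{n}\rho(T,\mu,A,\mathcal{P}_{n})=\rho(T,\mu,A,\mathcal{P})$, giving $\sup_{\mathcal{R}}\rho(T,\mu,A,\mathcal{R})=\rho(T,\mu,A,\mathcal{P})$.
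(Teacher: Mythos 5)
Your proposal is correct and follows essentially the same route as the paper: approximate $\mathcal{R}$ in the $\Delta$-metric by a partition coarser than some $\mathcal{P}_{m}^{F}$ and then chain Lemmas \ref{lem:covering-numbers-of-refining-partitions}, \ref{lem:covering-number-of-P-to-the-F} and \ref{lem:covering-numbers-of-L1-close-partitions}, letting the approximant depend on $\varepsilon$. You supply more detail than the paper on why the approximant can be taken co-finite (though note that since any $S_{i}\in\sigma(\mathcal{P}_{m}^{F})$ either contains or is disjoint from the infinite atom, a finite-symmetric-difference approximation of the finite-measure set $R_{i}$ automatically avoids it, so the ergodicity argument is not actually needed).
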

\begin{proof}
Write $\beta=\sup_{\mathcal{R}}\rho(T,\mu,A,\mathcal{R})$. Fix $\varepsilon>0$
and a co-finite partition $\mathcal{R}$ such that $\rho(T,\mu,A,\mathcal{R})\geq\beta-\varepsilon$.
Also let $\delta>0$. Since $\rho(T,\mu,A,\mathcal{P}_{n})$ is non-decreasing
in $n$ it follows that for each atom $R_{i}\in\mathcal{R}$ and all
large enough $n$ there is an $r=r(n,i)$ and $P_{n,i}\in\sigma(\mathcal{P}_{n}^{Q_{r}})$
such that $\mu(R_{i}\Delta P_{n,i})<\delta$. It follows that we can
choose an $n$ and $r$ such that there is a partition $\mathcal{P}$
which is coarser than $\mathcal{P}_{n}^{Q_{r}}$, and such that $\Delta(\mathcal{P},\mathcal{R})<\delta$.
Hence \begin{eqnarray*}
\rho(T,\mu,A,\mathcal{P}_{n}) & = & \rho(T,\mu,A,\mathcal{P}_{n}^{Q_{r}})\\
 & \geq & \rho(T,\mu,A,\mathcal{P})\\
 & \geq & \rho(T,\mu,A,\mathcal{P},\delta)\\
 & \geq & \rho(T,\mu,A,\mathcal{R},6\delta)\end{eqnarray*}
where the equality is by Lemma \ref{lem:covering-number-of-P-to-the-F},
the first inequality is by Lemma \ref{lem:covering-numbers-of-refining-partitions},
the second equality is by definition, and the last inequality is by
Lemma \ref{lem:covering-numbers-of-L1-close-partitions}. Taking the
limit as $\delta\rightarrow0$ gives \[
\rho(T,\mu,A,\mathcal{P}_{n})\geq\rho(T,\mu,A,\mathcal{R})\geq\beta-\varepsilon\]
The claim follows.
\end{proof}

\begin{lem}
\label{lem:metric-and-shifted-metric}There is a set $\Omega_{0}\subseteq\Omega$
with $\mu(\Omega\setminus\Omega_{0})=0$ such that for every $0<\varepsilon<1$
and every $u\in\mathbb{Z}^{k}$, if $x,y\in\Omega$ then \begin{equation}
|d_{\mathcal{P},n}(x,y)-d_{\mathcal{P},n}(T^{u}x,T^{u}y)|<\varepsilon\label{eq:metric-bound-for-shifted-sets}\end{equation}
for all large enough $n$\end{lem}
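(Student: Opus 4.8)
The plan is to rewrite both pseudo-distances using a single pair of subsets of $\mathbb{Z}^{k}$ and then reduce the claim to the statement that the core is visited only negligibly often inside a bounded-width boundary annulus of $Q_{n}$ --- which is precisely the content of the Chacon-Ornstein lemma (Theorem \ref{thm:Chacon-Ornstein}). Fix $x,y$ and set $\widetilde{D}=\{s\in\mathbb{Z}^{k}\,:\,\mathcal{P}(T^{s}x)\neq\mathcal{P}(T^{s}y)\}$ and $\widetilde{U}=\{s\in\mathbb{Z}^{k}\,:\,T^{s}x\notin P_{1}\mbox{ or }T^{s}y\notin P_{1}\}$, so that $d_{\mathcal{P},n}(x,y)=|\widetilde{D}\cap Q_{n}|/|\widetilde{U}\cap Q_{n}|$. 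Since $T^{w}T^{u}=T^{w+u}$, the substitution $s=w+u$ gives $d_{\mathcal{P},n}(T^{u}x,T^{u}y)=|\widetilde{D}\cap(Q_{n}+u)|/|\widetilde{U}\cap(Q_{n}+u)|$. Note also $\widetilde{D}\subseteq\widetilde{U}$, because two different atoms cannot both be $P_{1}$, so at least one of $T^{s}x,T^{s}y$ lies in the core.

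The first step is geometric: with $L=\|u\|_{\infty}$, both $Q_{n}$ and $Q_{n}+u$ are sandwiched between $Q_{n-L}$ and $Q_{n+L}$, hence $Q_{n}\bigtriangleup(Q_{n}+u)\subseteq Q_{n+L}\setminus Q_{n-L}$. Consequently the two numerators differ by at most $\partial_{n}:=|\widetilde{U}\cap(Q_{n+L}\setminus Q_{n-L})|$, and so do the two denominators (for the numerators one uses $\widetilde{D}\subseteq\widetilde{U}$). The crux is to show $\partial_{n}=o(|\widetilde{U}\cap Q_{n}|)$. Let $C=\bigcup_{i\geq2}P_{i}$ be the core, $f=1_{C}$, and $S_{n}(x)=\sum_{s\in Q_{n}}f(T^{s}x)$. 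Theorem \ref{thm:Chacon-Ornstein} says each single shell $Q_{m+1}\setminus Q_{m}$ contributes a vanishing fraction of $S_{m}(x)$; this forces $S_{m+1}(x)/S_{m}(x)\to1$, and summing the $2L$ shells comprising the annulus then gives $|\{s\in Q_{n+L}\setminus Q_{n-L}\,:\,T^{s}x\in C\}|=o(S_{n}(x))$ for a.e.\ $x$, and likewise for $y$. Since $\widetilde{U}\subseteq\{T^{s}x\in C\}\cup\{T^{s}y\in C\}$ while $|\widetilde{U}\cap Q_{n}|\geq\max(S_{n}(x),S_{n}(y))\geq\tfrac{1}{2}(S_{n}(x)+S_{n}(y))$, we obtain $\partial_{n}=o(|\widetilde{U}\cap Q_{n}|)$.

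The last step is an elementary quotient estimate. Writing $a=|\widetilde{D}\cap Q_{n}|$, $b=|\widetilde{U}\cap Q_{n}|$ and $a',b'$ for the corresponding shifted counts, we have $|a-a'|,|b-b'|\leq\partial_{n}$, $a\leq b$, and $b'\geq b-\partial_{n}\geq\tfrac{1}{2}b$ once $\partial_{n}\leq\tfrac{1}{2}b$. Then $|a/b-a'/b'|=|ab'-a'b|/(bb')\leq(a+b)\partial_{n}/(bb')\leq 4\partial_{n}/b\to0$, so $|d_{\mathcal{P},n}(x,y)-d_{\mathcal{P},n}(T^{u}x,T^{u}y)|\to0$; in particular it is eventually below any prescribed $\varepsilon$. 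I take $\Omega_{0}$ to be the full-measure set on which $S_{n}\to\infty$ and the Chacon-Ornstein conclusion for $f=1_{C}$ both hold. Crucially, $\Omega_{0}$ depends on neither $u$ nor $\varepsilon$, so for $x,y\in\Omega_{0}$ the convergence above is valid for every $u\in\mathbb{Z}^{k}$ and every $\varepsilon$ at once, giving the lemma.

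I expect the denominator to be the only real obstacle. The annulus $Q_{n+L}\setminus Q_{n-L}$ contains $\Theta(n^{k-1})$ lattice points, a bound that is worthless here since the core may be visited only subpolynomially often (indeed $|\widetilde{U}\cap Q_{n}|$ can grow like $|Q_{n}|^{\alpha}$ for small $\alpha$, per the recurrence dimension discussion). What makes the boundary genuinely negligible is therefore the \emph{orbit-adapted} statement of Chacon-Ornstein rather than any crude cardinality count; the remaining technical care is only in upgrading the single-shell ratios to the fixed-width annulus via $S_{n+j}(x)/S_{n}(x)\to1$.
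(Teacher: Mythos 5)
Your proof is correct and follows essentially the same route as the paper's: both reduce the claim to the Chacon--Ornstein lemma applied to $1_{\Omega\setminus P_{1}}$, bound the discrepancy between the two ratios by the number of core visits in $Q_{n+\left\Vert u\right\Vert }\setminus Q_{n-\left\Vert u\right\Vert }$, and conclude with an elementary quotient estimate. If anything, you are more explicit than the paper at the two points it glosses over, namely upgrading the single-shell Chacon--Ornstein ratios to the fixed-width annulus and handling the numerator by an additive (rather than multiplicative) perturbation.
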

\begin{proof}
Let $\Omega_{0}$ denote the set of points for which the Chacon-Ornstein
lemma (Theorem \ref{thm:Chacon-Ornstein}) holds for the function
$1_{\Omega\setminus P_{1}}$. We claim this is the desired set. Let
$x,y\in\Omega_{0}$, let\begin{eqnarray*}
U & (x)= & \{v\in\mathbb{Z}^{k}\,:\,\mathcal{P}(T^{v}x)=1\}\end{eqnarray*}
and similarly $U(y)$, and let $U=U(x)\cup U(y)$. By our choice of
$\Omega_{0}$, if $n$ is large enough then\begin{eqnarray*}
\frac{|U\cap(Q_{n}\Delta(Q_{n}+u))|}{|U\cap Q_{n}|} & \leq & \frac{|U(x)\cap(Q_{n}\Delta(Q_{n}+u)|}{|U(x)\cap Q_{n}|}+\frac{|U(y)\cap(Q_{n}\Delta(Q_{n}+u)|}{|U(y)\cap Q_{n}|}\\
 & < & \varepsilon\end{eqnarray*}
since two ratios on the right hand side are just the ratios in the
Chacon-Ornstein theorem applied to $1_{\Omega\setminus P_{1}}$ and
the points $x,y$. This implies that, in the expressions for $d_{\mathcal{P},n}(x,y)$
and $d_{\mathcal{P},n}(T^{u}x,T^{u}y)$, the numerators differ by
at most a multiplicative factor of $(1\pm\varepsilon)$, and similarly
the denominators. Using the fact that $(1+\varepsilon)/(1-\varepsilon)\leq1+4\varepsilon$
for $0<\varepsilon<1$ and the fact that $d_{\mathcal{P},n}\leq1$,
we obtain\[
|d_{\mathcal{P},n}(x,y)-d_{\mathcal{P},n}(T^{u}x,T^{u}y)|\leq4\varepsilon\]
the lemma follows.\end{proof}
\begin{cor}
\label{cor:covering-numbers-of-shifted-partitions}Let $\varepsilon>0$
and $u\in\mathbb{Z}^{k}$. Then $N(A,\mathcal{P},n,\varepsilon)\leq N(T^{u}A,\mathcal{P},n,\varepsilon/2)$
for all sufficiently large $n$.\end{cor}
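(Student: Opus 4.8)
The plan is to manufacture a cover of $A$ out of a cover of $T^{u}A$ by transporting it through $T^{u}$, and then to pay for the distortion of the pseudo-metric under $T^{u}$ using the comparison Lemma \ref{lem:covering-numbers-in-comparable-metrics} fed with the shift-invariance estimate of Lemma \ref{lem:metric-and-shifted-metric}. I may assume $0<\varepsilon<1$, the case $\varepsilon\geq1$ being trivial since $d_{\mathcal{P},n}\leq1$.

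First I would introduce the pulled-back pseudo-metric
\[
d'_{n}(x,y)=d_{\mathcal{P},n}(T^{u}x,T^{u}y).
\]
Since $T^{u}$ is a measure-preserving bijection with $\mu(T^{u}A)=\mu(A)$, it carries any family $E_{1},\ldots,E_{m}$ with $\diam(E_{i},d_{\mathcal{P},n})\leq\varepsilon/2$ to the family $T^{-u}E_{1},\ldots,T^{-u}E_{m}$ with $\diam(T^{-u}E_{i},d'_{n})\leq\varepsilon/2$, and it sends an $(\varepsilon/2)$-almost cover of $T^{u}A$ to an $(\varepsilon/2)$-almost cover of $A$ in $d'_{n}$. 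This gives at once
\[
N(A,d'_{n},\varepsilon/2)\leq N(T^{u}A,d_{\mathcal{P},n},\varepsilon/2),
\]
so it remains to prove $N(A,d_{\mathcal{P},n},\varepsilon)\leq N(A,d'_{n},\varepsilon/2)$ for large $n$. For this I would apply Lemma \ref{lem:covering-numbers-in-comparable-metrics} with $d^{2}=d_{\mathcal{P},n}$, $d^{1}=d'_{n}$, $a=1$, $\delta=\varepsilon/2$ and covering radius $\varepsilon/2$: since $a\cdot(\varepsilon/2)+\delta=\varepsilon$, its conclusion is exactly the inequality sought, provided I supply a set $A_{0}\subseteq A$ with $\mu(A_{0})>(1-\varepsilon/2)\mu(A)$ on which $d_{\mathcal{P},n}(x,y)\leq d'_{n}(x,y)+\varepsilon/2$ for all $x,y\in A_{0}$.

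The main obstacle, and the only real content, is upgrading the pointwise estimate of Lemma \ref{lem:metric-and-shifted-metric}, whose threshold on $n$ depends on the pair $(x,y)$, to one uniform over a set of pairs of product form $A_{0}\times A_{0}$ (which is what Lemma \ref{lem:covering-numbers-in-comparable-metrics} requires). A product Egorov argument on $A\times A$ would only give uniformity on an irregular subset of $A\times A$, not on a product. Instead I would apply Egorov's theorem to the single-point quantity that drives the estimate in the proof of Lemma \ref{lem:metric-and-shifted-metric}, namely the Chacon--Ornstein ratio
\[
r_{n}(x)=\frac{|V(x)\cap(Q_{n}\Delta(Q_{n}+u))|}{|V(x)\cap Q_{n}|},\qquad V(x)=\{v\in\mathbb{Z}^{k}\,:\,T^{v}x\notin P_{1}\},
\]
which by Theorem \ref{thm:Chacon-Ornstein} applied to $1_{\Omega\setminus P_{1}}$ tends to $0$ a.e. Egorov then yields $A_{0}\subseteq A$ with $\mu(A\setminus A_{0})<(\varepsilon/2)\mu(A)$ and a single $n_{0}$ beyond which $r_{n}(x)$ is as small as I please, uniformly for $x\in A_{0}$.

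Finally, re-running the estimate of Lemma \ref{lem:metric-and-shifted-metric} with $U=V(x)\cup V(y)$ and the bound $r_{n}(x)+r_{n}(y)$ in place of the per-pair ratio shows $|d_{\mathcal{P},n}(x,y)-d'_{n}(x,y)|\leq\varepsilon/2$ for all $x,y\in A_{0}$ and $n\geq n_{0}$ (choosing the Egorov bound small enough to absorb the multiplicative constant produced there). This furnishes the hypothesis of Lemma \ref{lem:covering-numbers-in-comparable-metrics}, and chaining the three inequalities gives
\[
N(A,d_{\mathcal{P},n},\varepsilon)\leq N(A,d'_{n},\varepsilon/2)\leq N(T^{u}A,d_{\mathcal{P},n},\varepsilon/2)
\]
for all $n\geq n_{0}$, which is the assertion. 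I expect the uniformization via Egorov applied to the right (single-point) functional, rather than to the pairwise difference, to be the delicate point; the rest is bookkeeping with the two cited lemmas.
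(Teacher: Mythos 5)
Your proposal is correct and follows essentially the same route as the paper: introduce the pulled-back pseudo-metric $d_{\mathcal{P},n}^{u}(x,y)=d_{\mathcal{P},n}(T^{u}x,T^{u}y)$, identify $N(A,d_{\mathcal{P},n}^{u},\varepsilon/2)$ with $N(T^{u}A,d_{\mathcal{P},n},\varepsilon/2)$, and combine Lemma \ref{lem:metric-and-shifted-metric} with Lemma \ref{lem:covering-numbers-in-comparable-metrics}. Your explicit Egorov argument applied to the single-point Chacon--Ornstein ratio is exactly the right way to obtain the set $A_{0}$ and uniform threshold $n_{0}$ that the paper's one-line proof takes for granted.
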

\begin{proof}
Write $d_{\mathcal{P},n}^{u}(x,y)=d_{\mathcal{P},n}(T^{u}x,T^{u}y)$.
This is a pseudo-metric on $A$ and by the previous lemma we can find
$n_{0}\in\mathbb{N}$ and $A_{0}\subseteq A$ with $\mu(A_{0})>(1-\varepsilon)\mu(A)$
and such that \eqref{eq:metric-bound-for-shifted-sets} holds for
$n>n_{0}$ and $x,y\in A_{0}$. It follows from Lemma \ref{lem:covering-numbers-in-comparable-metrics}
that $N(A,d_{\mathcal{P},n},\varepsilon)\leq N(A,d_{\mathcal{P},n}^{u},\varepsilon/2)$,
and the result follows since $N(A,d_{\mathcal{P},n}^{u},\varepsilon/2)=N(T^{u}A,d_{\mathcal{P},n},\varepsilon/2)$.
\end{proof}

\begin{lem}
\label{lem:covering-number-on-different-sets}If $A\subseteq B$ are
sets of positive finite $\mu$-measure then, for all large enough
$n$, \[
N(A,\mathcal{P},n,c_{1}\varepsilon)\leq N(B,\mathcal{P},n,\varepsilon)\leq c_{2}N(A,\mathcal{P},n,\varepsilon/c_{3})\]
where $c_{1},c_{2},c_{3}$ are constants which do not depend on $n$.
In particular, $\rho(T,\mu,A,\mathcal{P})=\rho(T,\mu,B,\mathcal{P})$.\end{lem}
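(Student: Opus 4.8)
The plan is to establish the two displayed inequalities separately and then read off the equality of $\rho$-entropies. Throughout set $G_X(\varepsilon)=\limsup_{n\to\infty}\rho\big((N(X,d_{\mathcal{P},n},\varepsilon))_{n}\big)$, so that $\rho(T,\mu,X,\mathcal{P})=\lim_{\varepsilon\searrow0}G_X(\varepsilon)$ for $X\in\{A,B\}$. Once both inequalities are in hand, applying $\rho$ to each and using scaled monotonicity (axiom (b), which absorbs the constant $c_2$) and then letting $\varepsilon\searrow0$ (which handles the rescaling, provided $\varepsilon/c_3\to0$) will give $\rho(T,\mu,A,\mathcal{P})\le\rho(T,\mu,B,\mathcal{P})$ and the reverse, hence equality.

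The \textbf{left inequality} $N(A,\mathcal{P},n,c_1\varepsilon)\le N(B,\mathcal{P},n,\varepsilon)$ is the easy direction and needs no largeness of $n$. Given an optimal $(\varepsilon,d_{\mathcal{P},n})$-cover $E_1,\dots,E_m$ of $B$, the same sets already have $d_{\mathcal{P},n}$-diameter $\le\varepsilon$, and since $A\subseteq B$ we have $\mu(A\setminus\bigcup_iE_i)\le\mu(B\setminus\bigcup_iE_i)\le\varepsilon\mu(B)=c_1\varepsilon\,\mu(A)$ with $c_1=\mu(B)/\mu(A)\ge1$. Thus these $m$ sets form a $(c_1\varepsilon)$-cover of $A$, giving the bound with $c_1=\mu(B)/\mu(A)$.

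The \textbf{right inequality} is the main point, and here I would cover $B$ by finitely many translates of $A$. The saturation $S=\bigcup_{u\in\mathbb{Z}^k}T^uA$ is invariant and has positive measure, so by ergodicity $\mu(\Omega\setminus S)=0$; since $\mu(B)<\infty$, there are finitely many $u_1,\dots,u_L$ (with $L=L(\delta)$ independent of $n$) such that $\mu\big(B\setminus\bigcup_{j=1}^LT^{u_j}A\big)<\delta\mu(B)$. For each $j$, Corollary \ref{cor:covering-numbers-of-shifted-partitions}, applied with $A$ replaced by $T^{u_j}A$ and $u$ by $-u_j$, gives $N(T^{u_j}A,\mathcal{P},n,\eta)\le N(A,\mathcal{P},n,\eta/2)$ for all $n$ beyond a threshold depending only on $u_j$; as there are finitely many $j$, one common threshold works. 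Taking an optimal $(\eta,d_{\mathcal{P},n})$-cover of each $T^{u_j}A$ and forming their union produces a family of at most $L\cdot N(A,\mathcal{P},n,\eta/2)$ sets, each of $d_{\mathcal{P},n}$-diameter $\le\eta$, whose complement inside $B$ has measure at most $L\eta\mu(A)+\delta\mu(B)$ (the first term from the $\eta$-fraction left uncovered in each of the $L$ translates, the second from the part of $B$ lying outside the translates). Choosing $\eta\le\varepsilon$ and then $\delta,\eta$ so that $L\eta\mu(A)+\delta\mu(B)\le\varepsilon\mu(B)$ yields an $(\varepsilon,d_{\mathcal{P},n})$-cover of $B$, hence $N(B,\mathcal{P},n,\varepsilon)\le c_2\,N(A,\mathcal{P},n,\varepsilon/c_3)$ with $c_2=L$ and $\varepsilon/c_3=\eta/2$; both are independent of $n$, and $\varepsilon/c_3\to0$ as $\varepsilon\to0$, which is all the final limit requires.

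The step I expect to be the main obstacle is the bookkeeping in the right inequality: one must arrange $\delta,\eta,L$ so that the accumulated uncovered mass $L\eta\mu(A)$ over the $L$ translates stays below $\varepsilon\mu(B)$ while keeping the scale $\eta/2=\varepsilon/c_3$ and the count $c_2=L$ free of $n$. This accumulation forces $c_2$ and $c_3$ to depend on $\varepsilon$ (through $L=L(\delta)$), but since scaled monotonicity absorbs $c_2$ and the outer limit $\varepsilon\searrow0$ absorbs the rescaling (as $\varepsilon/c_3\to0$), this is harmless for the stated conclusion. The only genuinely delicate point is confirming that the threshold beyond which Corollary \ref{cor:covering-numbers-of-shifted-partitions} applies can be taken uniform over the $u_j$, which it can precisely because there are finitely many of them.
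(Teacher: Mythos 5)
Your proof is correct and follows essentially the same route as the paper's: the easy direction by restricting an almost-cover of $B$ to $A$ (with $c_1=\mu(B)/\mu(A)$), and the hard direction by using ergodicity to cover $B$ up to small measure by finitely many translates $T^{u_j}A$, invoking Corollary \ref{cor:covering-numbers-of-shifted-partitions} uniformly over the finitely many $u_j$, and taking a union bound. The paper organizes the second step through the intermediate set $\widetilde{A}=\bigcup_{u\in F}T^{u}A$ and an application of the first part to $B\cap\widetilde{A}\subseteq\widetilde{A}$, but this is only a bookkeeping difference from your direct assembly of the cover of $B$; your explicit tracking of the accumulated uncovered mass $L\eta\mu(A)+\delta\mu(B)$ is, if anything, slightly more careful.
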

\begin{proof}
Fix $0<\varepsilon<1$ and let $E_{1},\ldots,E_{m}$ be a collection
of size $m=N(B,d_{\mathcal{P},n},\varepsilon)$ which is an $\varepsilon$-almost
cover of $B$. Then \[
\mu(A\setminus\bigcup_{i=1}^{m}E_{i})\leq\mu(B\setminus\bigcup_{i=1}^{m}E_{i})=\varepsilon\mu(B)=\varepsilon\frac{\mu(B)}{\mu(A)}\cdot\mu(A)\]
so $E_{1},\ldots,E_{m}$ is an $\varepsilon\mu(B)/\mu(A)$-almost
cover of $A$. It follows that $N(A,d_{\mathcal{P},n},\frac{\mu(B)}{\mu(A)}\varepsilon)\leq N(B,d_{\mathcal{P},n},\varepsilon)$.

For the other inequality we argue as follows. By ergodicity we have
$\mu(\Omega\setminus\bigcup_{u\in\mathbb{Z}^{k}}T^{u}A)=0$, so there
is a finite set $F\subseteq\mathbb{Z}^{k}$ such that $\mu(B\setminus\bigcup_{u\in F}T^{u}A)<\frac{\varepsilon}{2}\mu(B)$.
Write $\widetilde{A}=\bigcup_{u\in F}T^{u}A$, and note that $\mu(\widetilde{A})\leq|F|\cdot\mu(A)$,
so\[
\frac{\mu(B\cap\widetilde{A})}{\mu(\widetilde{A})}\geq\frac{(1-\varepsilon/2)\mu(B)}{|F|\mu(A)}\geq\frac{1}{2|F|}\]
By the previous corollary, for each $u\in F$ and large enough $n$
we have \[
N(T^{u}A,d_{\mathcal{P},n},\frac{\varepsilon}{4|F|})\leq N(A,d_{\mathcal{P},n},\frac{\varepsilon}{8|F|})\]
so for large enough $n$ we have \[
N(\widetilde{A},d_{\mathcal{P},n},\frac{\varepsilon}{4|F|})\leq|F|\cdot N(A,d_{\mathcal{P},n},\frac{\varepsilon}{8|F|})\]
Applying the first part of the current lemma to the containment $B\cap\widetilde{A}\subseteq\widetilde{A}$,
we find that\[
N(B\cap\widetilde{A},d_{\mathcal{P},n},\frac{\varepsilon}{2})\leq N(\widetilde{A},d_{\mathcal{P},n},\frac{\mu(B\cap\widetilde{A})}{\mu(\widetilde{A})}\cdot\frac{\varepsilon}{2})\leq N(\widetilde{A},d_{\mathcal{P},n},\frac{\varepsilon}{4|F|})\]
Since $\mu(B\cap\widetilde{A})\geq(1-\varepsilon/2)\mu(B)$ it follows
that \[
N(B,d_{\mathcal{P},n},\varepsilon)\leq N(B\cap\widetilde{A},d)(\frac{\varepsilon}{2})\]
combining the last three inequalities gives the desired result\end{proof}
\begin{cor}
\label{cor:covering-rho-entropy-on-diffferent-sets}If $A,B$ are
sets of finite measure then $\rho(T,\mu,\mathcal{P},A)=\rho(T,\mu,\mathcal{P},B)$.\end{cor}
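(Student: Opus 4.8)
The plan is to reduce the general case to the nested case already treated in Lemma \ref{lem:covering-number-on-different-sets}, which shows that $\rho(T,\mu,A,\mathcal{P})=\rho(T,\mu,B,\mathcal{P})$ whenever $A\subseteq B$ are sets of positive finite measure. For arbitrary sets $A$ and $B$ of positive finite measure, I would introduce the auxiliary set $C=A\cup B$. Since $\mu(C)\leq\mu(A)+\mu(B)<\infty$ while $\mu(C)\geq\mu(A)>0$, the set $C$ again has positive finite measure, so it is a legitimate reference set for the definition of the invariant.

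Now both containments $A\subseteq C$ and $B\subseteq C$ hold, so applying Lemma \ref{lem:covering-number-on-different-sets} to each gives $\rho(T,\mu,A,\mathcal{P})=\rho(T,\mu,C,\mathcal{P})$ and $\rho(T,\mu,B,\mathcal{P})=\rho(T,\mu,C,\mathcal{P})$. Chaining these two equalities through the common value on $C$ yields $\rho(T,\mu,A,\mathcal{P})=\rho(T,\mu,B,\mathcal{P})$, which is exactly the assertion of the corollary.

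There is no real obstacle here: the substantive two-sided comparison of covering numbers on nested sets---including the delicate direction that invokes ergodicity to dominate $B$ by finitely many translates $\bigcup_{u\in F}T^{u}A$ and then uses Corollary \ref{cor:covering-numbers-of-shifted-partitions} to transfer covers between $T^{u}A$ and $A$---has already been carried out in the preceding lemma. The corollary merely repackages this into a statement that does not presuppose one set is contained in the other. The only items to check are bookkeeping ones: that $A\cup B$ inherits positive finiteness of measure (immediate from subadditivity and monotonicity of $\mu$), and that the ordering of the arguments $\rho(T,\mu,A,\mathcal{P})$ matches the convention fixed in \eqref{eq:rho-entropy-of-partition} and in the lemma. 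Throughout we understand $A$ and $B$ to have \emph{positive} measure, since the covering numbers $N(A,d_{\mathcal{P},n},\varepsilon)$ in \eqref{eq:epsilon-covering-numbers} are only meaningful in that case.
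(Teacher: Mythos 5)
Your proposal is correct and follows exactly the paper's own argument: pass to $C=A\cup B$, apply Lemma \ref{lem:covering-number-on-different-sets} to the containments $A\subseteq C$ and $B\subseteq C$, and chain the two equalities. Nothing is missing.
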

\begin{proof}
Let $C=A\cup B$. Since $A\subseteq C$ and $B\subseteq C$, the previous
lemma and monotonicity of covering numbers in $\varepsilon$ implies
that $\rho(T,\mu,\mathcal{P},A)=\rho(T,\mu,\mathcal{P},C)$ and $\rho(T,\mu,\mathcal{P},B)=\rho(T,\mu,\mathcal{P},C)$,
and the conclusion follows.
\end{proof}

\subsection{\label{sub:Connections-with-recurrence}Connections with recurrence}

In the previous sections we defined $\rho$-entropy using name counts.
In this section we give a slightly simpler characterization in terms
of the complexity of recurrence patterns. 

Let $\rho(\cdot)$ be a growth function and $A\subseteq\Omega$ a
set of positive and finite measure. For $x\in A$, recall that the
pattern of returns to $A$ {}``up to time $n$'' is \[
R_{n}(A,x)=\{u\in\mathbb{Z}^{k}\,:\,\left\Vert u\right\Vert \leq n\mbox{ and }T^{u}x\in A\}\]
In order to compare return patterns of $x,y\in A$, for $n\in\mathbb{N}$
introduce the distance $d_{A,n}(x,y)$ by\[
d_{A,n}(x,y)=\frac{|R_{n}(A,x)\Delta R_{n}(A,y)|}{|R_{n}(A,x)\cup R_{n}(A,y)|}\]
Next, let $N_{n}(A,\varepsilon)=N(A,d_{A,n},\varepsilon)$ and set
\[
\widetilde{\rho}(T,\mu)=\sup_{A}\left(\lim_{\varepsilon\searrow0}\left(\limsup_{n\rightarrow\infty}\rho\left((N_{n}(A,\varepsilon))_{n=1}^{\infty}\right)\right)\right)\]
where the supremum is over measurable $A\subseteq\Omega$ of positive
finite measure.
\begin{prop}
\label{pro:recurrence-defintion-of-rho-entropy}$\widetilde{\rho}(T,\mu)=\rho(T,\mu)$
for i.m.p. actions and zero-entropy f.m.p. actions. $\widetilde{\rho}_{slow}(T,\mu)=\rho_{slow}(T,\mu)$
in all cases.\end{prop}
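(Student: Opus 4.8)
The plan is to show that the covering numbers $N(A,d_{\mathcal P,n},\varepsilon)$ defined via $\mathcal P$-names and the covering numbers $N_n(A,\varepsilon)=N(A,d_{A,n},\varepsilon)$ defined via return patterns to $A$ have the same growth, after which all three claims follow by taking a supremum over $A$ (resp. over $\mathcal P$) and applying the growth function $\rho$. The natural approach is to compare the two families of pseudo-metrics directly on a large subset of $A$ and then invoke Lemma~\ref{lem:covering-numbers-in-comparable-metrics}, exactly as in the proofs of Lemmas~\ref{lem:covering-numbers-of-refining-partitions}--\ref{lem:covering-numbers-of-L1-close-partitions}. The key observation linking the two sides is that the return pattern $R_n(A,x)$ is precisely the core of the $\mathcal P$-name for the special partition $\mathcal P_A=\{\Omega\setminus A,\,A\}$: with this two-set co-finite partition, $d_{\mathcal P_A,n}(x,y)$ records the symmetric difference of the visit sets to $A$ normalized by their union, which is \emph{identical} to $d_{A,n}(x,y)$. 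Hence $N_n(A,\varepsilon)=N(A,d_{\mathcal P_A,n},\varepsilon)$ on the nose, giving immediately $\widetilde\rho(T,\mu)\le\rho(T,\mu)$ once we note that the supremum defining $\widetilde\rho$ ranges over sets $A$, each of which corresponds to a two-atom co-finite partition admissible in the supremum defining $\rho$.

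For the reverse inequality I would fix an arbitrary co-finite partition $\mathcal P=\{P_1,\dots,P_r\}$ and manufacture, from its core atoms, a single reference set that controls $d_{\mathcal P,n}$ by return-pattern data. The idea is that knowing the return patterns to each finite atom $P_2,\dots,P_r$ determines the $\mathcal P$-name up to a $0$-density set of coordinates, so $d_{\mathcal P,n}$ should be estimated by a bounded combination of the pattern-metrics $d_{P_i,n}$. Concretely I would first apply Lemma~\ref{lem:covering-numbers-of-L1-close-partitions} to reduce to partitions whose finite atoms are, up to small measure, unions of return-cylinders, and then use the ratio ergodic theorem (Theorem~\ref{thm:ratio-ergodic-thm}) to pass to a subset $A_0\subseteq A$ of measure $>(1-\delta)\mu(A)$ on which the relevant frequency ratios stabilize, so that the hypotheses of Lemma~\ref{lem:covering-numbers-in-comparable-metrics} are met with constants $a,\delta$ independent of $n$. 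The covering-number comparison then transfers through $\rho$ by its scaled-monotonicity and tail properties.

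The dichotomy in the statement --- equality of $\widetilde\rho$ and $\rho$ holds for all i.m.p.\ actions and for \emph{zero-entropy} f.m.p.\ actions for a general growth function, but holds unconditionally for $\rho_{slow}$ --- is where the delicate point lies, and I expect it to be the main obstacle. The restriction to zero entropy (or infinite measure) enters because the reduction above implicitly discards coordinates at which \emph{both} $x$ and $y$ lie in the infinite atom; in the i.m.p.\ case the Chacon--Ornstein lemma (Theorem~\ref{thm:Chacon-Ornstein}, via Lemma~\ref{lem:metric-and-shifted-metric}) guarantees that the normalizing denominators $|U\cap Q_n|$ grow slowly enough relative to $|Q_n|$ that the discarded mass is negligible, whereas for a positive-entropy f.m.p.\ action this fails and $\widetilde\rho$ can genuinely under-count. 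The point for $\rho_{slow}$ specifically is that $\rho_{slow}(\underline s)=\limsup \log\log s_n/\log n$ is insensitive to the multiplicative and polynomial distortions introduced by the constants $a,|F|,c_i$ above --- replacing $s_n$ by $c\,s_n^{|F|}$ leaves $\rho_{slow}$ unchanged --- so even when $\widetilde\rho_{exp}$ and $\rho_{exp}$ diverge at the exponential scale, the stretched-exponential invariants must agree. I would isolate this last observation as the crux: verify that $\rho_{slow}(\underline s)=\rho_{slow}(\underline t)$ whenever $s_n\le t_n^{C}\le s_n^{C^2}$ for a constant $C$ and all large $n$, and check that the comparison constants produced by the two-sided covering estimates are always of this polynomial-in-exponent form.
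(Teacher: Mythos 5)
Your first direction is exactly the paper's: $d_{A,n}=d_{\{A,\Omega\setminus A\},n}$, so every set $A$ contributes a two-atom co-finite partition to the supremum defining $\rho$, whence $\widetilde{\rho}\leq\rho$. The divergence, and the gap, is in the reverse direction. The paper does not compare an arbitrary $\mathcal{P}$ to return-pattern data at all; it invokes the existence of a \emph{two-set generator} --- Krengel's theorem for i.m.p.\ actions, Krieger's theorem for zero-entropy f.m.p.\ actions --- and then Lemma \ref{lem:covering-numbers-of-generating-partitions} says the single partition $\{A,\Omega\setminus A\}$ already realizes $\sup_{\mathcal{R}}\rho(T,\mu,\mathcal{R})$, so $\widetilde{\rho}\geq\rho$ with no loss. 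That is the entire reason for the zero-entropy/i.m.p.\ hypothesis: it is a generator-existence hypothesis. Your diagnosis (discarded coordinates in the infinite atom, controlled via Chacon--Ornstein) is not what is going on, and your plan never produces a generator.

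Your alternative --- dominate $d_{\mathcal{P},n}$ by the pattern-metrics of the individual finite atoms --- can be made to work for $\rho_{slow}$, and in fact more cleanly than you suggest: one has the exact pointwise inequality $d_{\mathcal{P},n}(x,y)\leq\sum_{i=2}^{r}d_{P_{i},n}(x,y)$ (the return patterns determine the name exactly, not just up to density zero, and no ratio ergodic theorem or $A_{0}$ is needed). But Lemma \ref{lem:covering-numbers-in-comparable-metrics} compares one metric against \emph{one} other; to exploit a sum of $r-1$ metrics you must intersect covers, giving $N(A,d_{\mathcal{P},n},\varepsilon)\leq\prod_{i=2}^{r}N(A,d_{P_{i},n},\varepsilon/(r-1))$, i.e.\ an exponent inflated by the factor $r-1$. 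This is harmless for $\rho_{slow}$ (your closing observation about polynomial distortions of the exponent is correct, and this route would even handle positive-entropy f.m.p.\ actions uniformly, avoiding Krieger and the paper's separate maximality argument), but it is fatal for the first sentence of the proposition: a general growth function satisfying only the tail and scaled-monotonicity axioms is not invariant under $s_{n}\mapsto s_{n}^{r-1}$ --- the paper's own example $\rho_{exp}$ satisfies $\rho_{exp}(\underline{s}^{r-1})=(r-1)\rho_{exp}(\underline{s})$ --- so your argument only yields $\rho(T,\mu)\leq(r-1)\widetilde{\rho}(T,\mu)$ in general. To prove $\widetilde{\rho}=\rho$ for arbitrary $\rho$ you need the generator-theorem route, and that missing ingredient is the substance of the proposition's hypotheses.
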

\begin{proof}
Note that $d_{A,n}(x,y)=d_{\{A,\Omega\setminus A\},n}(x,y)$, where
the right hand side is as in equation \eqref{eq:Pn-distance}. This
implies that $\widetilde{\rho}(T,\mu)\leq\rho(T,\mu)$ for all actions. 

For the reverse inequality, when the action is i.m.p. or f.m.p. with
entropy zero, choose $A$ such that $\mathcal{P}=\{A,\Omega\setminus A\}$
generates for the action (in the former case this can be done by Krengel's
theorem \cite{Krengel970}, in the latter by Krieger's generator theorem
\cite{Krieger1970}). By Lemma \ref{lem:covering-numbers-of-generating-partitions}
we have \[
\lim_{\varepsilon\searrow0}\left(\limsup_{n\rightarrow\infty}\rho\left((N_{n}(A,\varepsilon))_{n=1}^{\infty}\right)\right)=\rho(T,\mu,A,\{A,\Omega\setminus A\})=\rho(T,\mu)\]
and hence $\widetilde{\rho}\geq\rho$.

For $\rho_{slow,}$ it remains to prove equality for positive-entropy
f.m.p. actions. In this case observe that one can always find a set
$A$ such that the partition $\{A,\Omega\setminus A\}$ has positive
entropy; hence $\widetilde{\rho}_{slow}((N(A,d_{A,n},\varepsilon))_{n=1}^{\infty})=1$
for all small enough $\varepsilon>0$, and so $\widetilde{\rho}_{slow}(T,\mu)\geq1=\rho_{slow}(T,\mu)$.
The reverse inequality was established at the beginning of the proof. 
\end{proof}
Recall from the introduction that \[
\alpha(T,\mu)=\limsup_{n\rightarrow\infty}\frac{\log|R_{n}(A,x)|}{\log|Q_{n}|}\]
which is independent of $A$ and a.s. independent of $x$. We show
next that $\rho_{slow}(T,\mu)\leq\alpha(T,\mu)$.
\begin{proof}
[Proof of Proposition \ref{pro:recurrence-bound-for-rho-slow}] Using
the trivial binomial bound $\binom{n}{m}\leq n^{m}$, for $\delta>0$
we find that the number of subsets $E\subseteq Q_{n}$ with $|E|\leq|Q_{n}|^{\delta}$
is at most \[
\binom{|Q_{n}|}{\left\lceil |Q_{n}|^{\delta}\right\rceil }\leq|Q_{n}|^{\left\lceil |Q_{n}|^{\delta}\right\rceil }\leq|Q_{n}|^{2|Q_{n}|^{\delta}}=2^{2|Q_{n}|^{\delta}\log|Q_{n}|}\]

Given a set $A\subseteq\Omega$ of finite measure and $\varepsilon\geq0$,
by definition of $\alpha=\alpha(T,\mu)$ there is an $n_{0}$ and
a subset $A_{\varepsilon}\subseteq A$ such that $\mu(A_{\varepsilon})>(1-\varepsilon)\mu(A)$,
and if $n>n_{0}$ and $x\in A_{\varepsilon}$ then $|R_{n}(A,x)|\leq|Q_{n}|^{\alpha+\varepsilon}$.
Fix $n>n_{0}$. For each $E\subseteq Q_{n}$ of size $|E|\leq|Q_{n}|^{\alpha+\varepsilon}$
let $A_{\varepsilon}^{E}=\{x\in A_{\varepsilon}\,:\, R_{n}(A,x)=E\}$.
Clearly $\diam(A_{\varepsilon}^{E},d_{A,n})=0$. On the other hand
the estimate above tells us that one can cover $A_{\varepsilon}$
by $2^{2|Q_{n}|^{\alpha}\log|Q_{n}|}$ sets of this form. Therefore,
for $n>n_{0}$ we have \[
N(A,d_{A,n},\varepsilon)\leq2^{2|Q_{n}|^{\alpha+\varepsilon}\log|Q_{n}|}\]
Plugging this into the definition we find that $\widetilde{\rho}(T,\mu)\leq\alpha+\varepsilon$,
and hence by the previous proposition the same is true for $\rho_{slow}(T,\mu)$.
Since $\varepsilon>0$ was arbitrary, this proves the claim.
\end{proof}

\section{\label{sec:Lipschitz-actions}Lipschitz actions and examples}

\subsection{\label{sub:Lipshifts-actions-dimension-and-entropy}Lipschitz actions,
dimension and slow entropy }

A map $f:\Omega\rightarrow\Omega$ is Lipschitz if there is a constant
$C$ such that $d(fx,fy)\leq Cd(x,y)$. The smallest constant with
this property is denoted $\lip(f)$ . Note that\[
\lip(f\circ g)\leq\lip(f)\cdot\lip(g)\]
Note also that diffeomorphisms of Riemannian manifolds are Lipschitz
maps. 
\begin{lem}
\label{lem:lipschitz-constant-bound}Suppose $\mathbb{Z}^{k}$ acts
on a metric space $(\Omega,d)$ by Lipschitz maps. Then there is a
constant $C$ such that $\lip T^{u}\leq C^{\left\Vert u\right\Vert }$.\end{lem}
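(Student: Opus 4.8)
The plan is to exploit the submultiplicativity of the Lipschitz constant under composition, noted just above the statement, together with the fact that every element of $\mathbb{Z}^{k}$ is a word of controlled length in the standard generators. First I would fix the standard basis $e_{1},\ldots,e_{k}$ of $\mathbb{Z}^{k}$ and set
\[
C_{0}=\max\Bigl(1,\ \max_{1\le i\le k}\max\bigl(\lip T^{e_{i}},\,\lip T^{-e_{i}}\bigr)\Bigr).
\]
This is a finite constant $\ge 1$: by hypothesis \emph{every} element of the group acts by a Lipschitz map, so in particular each of the $2k$ maps $T^{\pm e_{i}}$ has finite Lipschitz constant. The inverse generators $T^{-e_{i}}$ are legitimately included because they too are elements of the action, and hence Lipschitz; this is the one point worth flagging, since the hypothesis must be read as the whole group acting by Lipschitz maps rather than merely a generating set.

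Next, for an arbitrary $u=(u_{1},\ldots,u_{k})\in\mathbb{Z}^{k}$ I would express $T^{u}$ as a composition of generators. Since the action is commutative,
\[
T^{u}=\bigl(T^{\varepsilon_{1}e_{1}}\bigr)^{|u_{1}|}\circ\cdots\circ\bigl(T^{\varepsilon_{k}e_{k}}\bigr)^{|u_{k}|},
\]
where $\varepsilon_{i}=\operatorname{sgn}(u_{i})$, a composition of exactly $|u_{1}|+\cdots+|u_{k}|=\|u\|_{1}$ generator maps. Applying submultiplicativity repeatedly gives
\[
\lip T^{u}\le\prod_{i=1}^{k}\bigl(\lip T^{\varepsilon_{i}e_{i}}\bigr)^{|u_{i}|}\le C_{0}^{\|u\|_{1}}.
\]

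Finally I would pass from $\|u\|_{1}$ to the ambient norm $\|\cdot\|$ used in the paper. Since all norms on $\mathbb{R}^{k}$ are equivalent, there is a constant $c=c(k,\|\cdot\|)$ with $\|u\|_{1}\le c\|u\|$ for every $u$; because $C_{0}\ge 1$, the bound $\lip T^{u}\le C_{0}^{\|u\|_{1}}\le C_{0}^{c\|u\|}$ holds, so taking $C=C_{0}^{c}$ yields $\lip T^{u}\le C^{\|u\|}$ as claimed. The argument is entirely formal and presents no genuine obstacle; the only steps requiring a moment's care are the inclusion of the inverse generators in the definition of $C_{0}$ and the normalization $C_{0}\ge 1$, which ensures the exponent estimate survives the norm comparison.
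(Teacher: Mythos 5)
Your proof is correct and follows essentially the same route as the paper's: define $C_0$ as the maximum of the Lipschitz constants of the generators and their inverses, decompose $T^{u}$ as a word of length $\|u\|_{1}$ in these generators, apply submultiplicativity, and convert norms at the end. You are in fact slightly more careful than the paper, which elides the normalization $C_0\geq 1$ and the final comparison between $\|u\|_{1}$ and the ambient norm with ``the claim follows.''
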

\begin{proof}
Let $T_{1},\ldots,T_{k}$ be generators of the action and \[
C=\sup_{i=1,\ldots,k}\{\lip(T_{i}),\lip(T_{i}^{-1})\}\]
Then for $u=(u_{1},\ldots,u_{k})\in\mathbb{Z}^{k}$,\begin{eqnarray*}
\lip(T^{u}) & = & \lip(T^{u_{1}}T^{u_{2}}\ldots T^{u_{n}})\\
 & \leq & \prod_{i=1}^{k}\lip(T^{u_{i}})\\
 & \leq & \prod_{i=1}^{k}(\lip T_{i})^{|u_{i}|}\\
 & \leq & C^{\sum|u_{i}|}\end{eqnarray*}
the claim follows.
\end{proof}
Recall that if $(\Omega,d)$ is a compact metric space then the $\varepsilon$-separation
number $\sep(\Omega,d,\varepsilon)$ is the size of the largest $E\subseteq\Omega$
such that $d(x,y)\geq\varepsilon$ for distinct $x,y\in E$. The upper
Minkowski (box) dimension $(\Omega,d)$ is\[
\bdim(\Omega,d)=\limsup_{\varepsilon\searrow0}\frac{\log\sep(\Omega,d,\varepsilon)}{\log\varepsilon}\]
so $\bdim(\Omega,d)<\alpha$ implies that $\sep(\Omega,d,\varepsilon)\leq c\cdot(1/\varepsilon)^{\alpha}$.
Note that if $\Omega$ is a compact manifold of dimension $m$ and
$d$ is a Riemannian metric, then $\bdim(\Omega,d)=m$.

Finally, recall that when $\mathbb{Z}^{k}$ acts on a metric space
$(\Omega,d)$ then the Bowen metric on $\Omega$ is \begin{equation}
d_{n}^{\infty}(x,y)=\sup_{\left\Vert u\right\Vert \leq n}d(T^{u}x,T^{u}y)\label{eq:bowen-metric}\end{equation}

\begin{lem}
\label{lem:bound-on-bowen-separation}Suppose that $(\Omega,d)$ has
finite box dimension and that $\mathbb{Z}^{k}$ acts on it by Lipschitz
maps. Then there is a constant $C$ such that $\sep(\Omega,d_{n},\varepsilon)\leq C^{n}/\varepsilon^{C}$.\end{lem}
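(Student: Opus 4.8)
The plan is to compare the Bowen metric $d_{n}^{\infty}$ with the original metric $d$ using the exponential control on Lipschitz constants from Lemma \ref{lem:lipschitz-constant-bound}, and then feed the resulting rescaling of $\varepsilon$ into the separation-number bound coming from finite box dimension. There is no conceptual difficulty; the work is a short chain of inequalities followed by bookkeeping of constants.

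First I would invoke Lemma \ref{lem:lipschitz-constant-bound} to obtain a constant $C_{0}\geq 1$ with $\lip(T^{u})\leq C_{0}^{\|u\|}$ for all $u\in\mathbb{Z}^{k}$. Then for $\|u\|\leq n$ and any $x,y\in\Omega$ we have $d(T^{u}x,T^{u}y)\leq C_{0}^{\|u\|}d(x,y)\leq C_{0}^{n}d(x,y)$, and taking the supremum over $\|u\|\leq n$ gives $d_{n}^{\infty}(x,y)\leq C_{0}^{n}d(x,y)$. The useful reading is the contrapositive: if $d_{n}^{\infty}(x,y)\geq\varepsilon$ then $d(x,y)\geq\varepsilon C_{0}^{-n}$. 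Consequently, any $\varepsilon$-separated set for $d_{n}^{\infty}$ is an $(\varepsilon C_{0}^{-n})$-separated set for $d$, so that
\[
\sep(\Omega,d_{n}^{\infty},\varepsilon)\leq\sep(\Omega,d,\varepsilon C_{0}^{-n}).
\]

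Next I would use the finite box dimension hypothesis. Fixing any $\alpha>\bdim(\Omega,d)$, there is a constant $c$ with $\sep(\Omega,d,\delta)\leq c\,(1/\delta)^{\alpha}$ for all $\delta$ in the relevant range. Substituting $\delta=\varepsilon C_{0}^{-n}$ yields $\sep(\Omega,d_{n}^{\infty},\varepsilon)\leq c\,(C_{0}^{n}/\varepsilon)^{\alpha}=c\,C_{0}^{\alpha n}\,\varepsilon^{-\alpha}$.

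Finally, the only remaining task is to absorb $c$, $C_{0}^{\alpha}$ and $\alpha$ into a single constant $C$. Taking $C$ sufficiently large — for instance $C=\max\{\alpha,\,C_{0}^{\alpha},\,c\,C_{0}^{\alpha}\}$ — gives $c\,C_{0}^{\alpha n}\leq C^{n}$, and since we may assume $\varepsilon\leq 1$ (for larger $\varepsilon$ the separation number is bounded by a constant) we have $\varepsilon^{-\alpha}\leq\varepsilon^{-C}$, whence $\sep(\Omega,d_{n}^{\infty},\varepsilon)\leq C^{n}/\varepsilon^{C}$, as claimed. I do not expect any genuine obstacle; the one point meriting a word of care is ensuring the estimate $\sep(\Omega,d,\delta)\leq c\,(1/\delta)^{\alpha}$ is available for all the scales $\delta=\varepsilon C_{0}^{-n}$ that arise, which is automatic once $\Omega$ is totally bounded (as it must be to have finite box dimension), and at the large scales $\delta$ the separation number is in any case bounded.
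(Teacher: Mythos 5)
Your argument is correct and is essentially identical to the paper's proof: both deduce from Lemma \ref{lem:lipschitz-constant-bound} that a $(d_{n}^{\infty},\varepsilon)$-separated set is $(d,\varepsilon C_{0}^{-n})$-separated, then apply the polynomial bound on $\sep(\Omega,d,\cdot)$ coming from finite box dimension and absorb the constants. Nothing further is needed.
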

\begin{proof}
Let $C_{1}$ be a constant such that $\lip(T^{u})\leq C_{1}^{\left\Vert u\right\Vert }$,
and let $C_{2}$ be a constant such that $\sep(\Omega,d,\varepsilon)\leq C_{2}(1/\varepsilon)^{C_{2}}$.
Notice that if $d_{n}^{\infty}(x,y)\geq\varepsilon$ then there is
some $u$ with $\left\Vert u\right\Vert \leq n$ such that $d(T^{u}x,T^{u}y)\geq\varepsilon$,
so \[
d(x,y)\geq\frac{\varepsilon}{C_{1}^{\left\Vert u\right\Vert }}\geq\frac{\varepsilon}{C_{1}^{n}}\]
Therefore if $E\subseteq\Omega$ is a $(d_{n}^{\infty},\varepsilon)$-separated
set it is also $(d,\varepsilon/C_{1}^{n})$-separated, hence \[
\sep(\Omega,d_{n}^{\infty},\varepsilon)\leq\sep(\Omega,d,\varepsilon/C_{1}^{n})\leq C_{2}\cdot C_{1}^{C_{2}n}/\varepsilon^{C_{2}}\]
the claim follows.
\end{proof}
We now turn to the proof of Theorem \ref{thm:large-s-implies-no-diff-model}. 
\begin{lem}
\label{lem:slow-core-for-non-pure-actions}If $(\Omega,\mathcal{B},\mu,T)$
is an ergodic with non-uniform slow entropy and $\rho_{slow}(T,\mu)>\beta$,
then there is a co-finite partition $\mathcal{P}$ with core $A$
such that $\rho_{slow}(T,\mu,\mathcal{P})>\beta$ and $\rho_{slow}(T,\mu,\mathcal{P})>\rho_{slow}(T,\mu,\{\Omega\setminus A,A\})$.\end{lem}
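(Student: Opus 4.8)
The plan is to locate a single set $A_0$ of positive finite measure that can serve \emph{both} as the core of a low-complexity co-finite partition and as the core of an arbitrarily high-complexity one; the two-set partition $\{A_0,\Omega\setminus A_0\}$ will then inherit the low complexity, while the high-complexity partition will be the desired $\mathcal{P}$. First I would exploit the failure of uniformity. Since the action does not have uniform slow entropy (Definition \ref{def:pure-actions}), there are two non-trivial co-finite partitions with distinct $\rho_{slow}$-values; taking the one with the smaller value produces a co-finite partition $\mathcal{Q}$ with $s_0:=\rho_{slow}(T,\mu,\mathcal{Q})<\rho_{slow}(T,\mu)$. Let $A_0$ be the core of $\mathcal{Q}$. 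As $\{A_0,\Omega\setminus A_0\}$ is the coarsening of $\mathcal{Q}$ obtained by merging all its finite atoms, Lemma \ref{lem:covering-numbers-of-refining-partitions} gives $\rho_{slow}(T,\mu,\{A_0,\Omega\setminus A_0\})\le s_0$. Since $\beta$ and $s_0$ both lie strictly below $\rho_{slow}(T,\mu)$, I may fix $\gamma$ with $\max(\beta,s_0)<\gamma<\rho_{slow}(T,\mu)$.

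The core of the argument is to manufacture a co-finite partition whose core is exactly $A_0$ and whose slow entropy exceeds $\gamma$. I would build a refining sequence $\mathcal{P}_m=\{\Omega\setminus A_0\}\cup\xi_m$, where $\xi_m$ is a refining sequence of finite partitions of $A_0$ chosen so that $\sigma(\bigvee_m\xi_m)$ is the full Borel $\sigma$-algebra of $A_0$. Every $\mathcal{P}_m$ is then co-finite with the \emph{same} core $A_0$. The point to check is that this sequence generates, i.e. $\sigma(\bigvee_m\mathcal{P}_m^{\mathbb{Z}^k})=\mathcal{B}\bmod\mu$: by conservativity of the ergodic action almost every $x$ has some $u$ with $T^ux\in A_0$, and for such a $u$ the sets $T^{-u}B$ (for Borel $B\subseteq A_0$), together with the recurrence sets $T^{-u}A_0$, separate $x$ from any other point; a standard countable-generation argument then promotes this pointwise separation to generation of all of $\mathcal{B}$. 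Granting generation, the refining-sequence form of Lemma \ref{lem:covering-numbers-of-generating-partitions} yields $\sup_m\rho_{slow}(T,\mu,\mathcal{P}_m)=\rho_{slow}(T,\mu)>\gamma$, so some $\mathcal{P}:=\mathcal{P}_m$ satisfies $\rho_{slow}(T,\mu,\mathcal{P})>\gamma$. Using a refining \emph{sequence} rather than a single finite generator is essential here, since it is what allows me to prescribe the core while avoiding any appeal to a finite generator theorem with prescribed core, which I do not have available for $\mathbb{Z}^k$-actions.

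It then remains to read off the conclusion: $\mathcal{P}$ has core $A_0$, and $\rho_{slow}(T,\mu,\mathcal{P})>\gamma>\beta$, while at the same time $\rho_{slow}(T,\mu,\mathcal{P})>\gamma>s_0\ge\rho_{slow}(T,\mu,\{A_0,\Omega\setminus A_0\})$, which are precisely the two required inequalities. The step I expect to be the main obstacle is the generation claim for the fixed-core sequence $\mathcal{P}_m$: one must verify that spreading the Borel structure of $A_0$ over the orbit through the group action recovers $\mathcal{B}$ modulo $\mu$, \emph{and} that this can be realized through a refining chain of \emph{finite} partitions so that Lemma \ref{lem:covering-numbers-of-generating-partitions} is applicable. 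Everything else reduces to bookkeeping with the refinement monotonicity of Lemma \ref{lem:covering-numbers-of-refining-partitions} and the $A$-independence supplied by Corollary \ref{cor:covering-rho-entropy-on-diffferent-sets}.
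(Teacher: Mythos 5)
Your argument is correct and follows essentially the same route as the paper: extract a low-complexity co-finite partition from the failure of uniformity, pass to the two-set partition determined by its core via the refinement lemma, and then recover high complexity with a refining sequence of co-finite partitions sharing that core, which generates by ergodicity and so attains $\rho_{slow}(T,\mu)$ in the supremum. The only difference is that you spell out the generation step (which the paper dispatches with ``by ergodicity'') in more detail.
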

\begin{proof}
Since the action has non-uniform slow entropy, there is a co-finite
partition $\mathcal{P}'=\{P'_{1},\ldots,P'_{m}\}$ such that $\rho_{slow}(T,\mu,\mathcal{P}')<\rho_{slow}(T,\mu)$.
Let $A$ denote the core of $\mathcal{P}'$ and let $\mathcal{P}''=\{\Omega\setminus A,A\}$.
Since $\mathcal{P}'$ is a refinement of $\mathcal{P}''$, by Lemma
\ref{lem:covering-numbers-of-refining-partitions} we have $\rho_{slow}(T,\mu,\mathcal{P}'')<\rho_{slow}(T,\mu)$.

Let $\mathcal{P}_{n}$ be a refining sequence of co-finite partitions
with common core $A$ and which separate points in $A$. By ergodicity,
the hypothesis of Lemma \ref{lem:covering-numbers-of-generating-partitions}
is satisfied, and we can choose an $n$ such that the partition $\mathcal{P}'''=\mathcal{P}_{n}$
satisfies $\rho_{slow}(T,\mu,\mathcal{P}''')>\rho_{slow}(T,\mu,\mathcal{P}'')$
and also $\rho_{slow}(T,\mu,\mathcal{P}''')>\beta$. This is the desired
partition.
\end{proof}
We now begin the proof of Theorem \ref{thm:large-s-implies-no-diff-model}.
Suppose that $(\Omega,\mathcal{B},\mu,T)$ is an ergodic $\mathbb{Z}^{k}$-action
by Lipschitz maps with respect to the compact, separable metric $d$
on $\Omega$. Assuming $\rho_{slow}(T,\mu)>1$ and the action does
not have uniform slow entropy, we will show that $\bdim(\Omega,d)=\infty$.

Let $\mathcal{P}=\{P_{1},\ldots,P_{m}\}$ be a co-finite partition
with core $A$ and write $\mathcal{A}=\{\Omega\setminus A,A\}$. Assume
that $\rho_{slow}(T,\mu,\mathcal{P})>1$ and $\rho_{slow}(T,\mu,\mathcal{P})>\rho_{slow}(T,\mu,\mathcal{A})$,
as we may by the previous lemma. Choose $\beta,\gamma$ and $\varepsilon>0$
such that $\gamma>1$ and\[
\rho_{slow}(A,d_{\mathcal{P},n},\varepsilon)>\gamma>\beta>\rho_{slow}(A,d_{\mathcal{A},n},\varepsilon)\]

Recall that a finite Borel measure $\mu$ on a complete separable
metric space is inner regular, i.e. \[
\mu(C)=\sup\{\mu(K)\,:\, K\subseteq C\mbox{ and }K\mbox{ is compact}\}\]
Applying this to the restriction of $\mu$ to $A$, define a partition
$\mathcal{R}$ as follows. Let $R_{1}=P_{1}$, so $\mathcal{R}$ and
$\mathcal{P}$ have the same core $A$, and replace each finite-measure
atom $P_{i}\in\mathcal{P}$ by sets $K_{i}$ and $P_{i}\setminus K_{i}$,
where $K_{i}$ is a compact set satisfying \begin{equation}
\mu(K_{i})>(1-\frac{\varepsilon}{8|\mathcal{P}|})\mu(P_{i})\label{eq:Ki-exhaust-Pi}\end{equation}
Note that $\mathcal{R}$ refines $\mathcal{P}$, so \[
\rho_{slow}(T,\mu,\mathcal{R},\varepsilon)>\gamma\]
Since the $K_{i}$ are compact there is a $\tau>0$ such that \[
\min\{d(x,y)\,:\, x\in K_{i}\;,\; y\in K_{j}\}>\tau\qquad\mbox{ for all }i\neq j\]
Also, write\[
K=\bigcup_{i=1}^{m}K_{i}\]

By the ratio ergodic theorem, we may choose an $n_{0}$ and a set
$A_{0}\subseteq A$ of measure $\mu(A_{0})>(1-\frac{\varepsilon}{4})\mu(A)$
such that, for $n>n_{0}$ and $x\in A_{0}$,\begin{equation}
\frac{\sum_{u\in Q_{n}}1_{P_{i}\setminus K_{i}}(T^{u}x)}{\sum_{u\in Q_{n}}1_{A}(T^{u}x)}<\frac{\mu(P_{i}\setminus K_{i})}{\mu(A)}+\frac{\varepsilon}{8|\mathcal{P}|}\qquad i=2,\ldots,m\label{eq:ration-ergodic-theorem-for-Ki}\end{equation}

\begin{lem}
\label{lem:from-pattern-to-bowen-metrics}Let $n>n_{0}$ and $x,y\in A_{0}$,
and suppose that $d_{\mathcal{A},n}(x,y)<\varepsilon/4$ and $d_{\mathcal{R},n}(x,y)\geq\varepsilon/2$.
Then $d_{n}^{\infty}(x,y)\geq\tau$.\end{lem}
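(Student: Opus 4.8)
The plan is to exhibit a single time $u\in Q_n$ at which $T^u x$ and $T^u y$ land in two \emph{distinct} compact pieces $K_i,K_j$ with $i\neq j$. Since the $K_i$ are pairwise $\tau$-separated, such a $u$ gives $d(T^u x,T^u y)>\tau$, and because $u\in Q_n$ this immediately yields $d_n^\infty(x,y)\geq\tau$. So the statement reduces to a counting argument inside $Q_n$. Write $S_x=R_n(A,x)$, $S_y=R_n(A,y)$ and $U=S_x\cup S_y$; since $x,y\in A$ we have $0\in U$, hence $|U|\geq1$. Both $\mathcal{A}$ and $\mathcal{R}$ have the same core $A$, so the denominators in $d_{\mathcal{A},n}$ and $d_{\mathcal{R},n}$ are both $|U|$. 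Thus the hypothesis $d_{\mathcal{A},n}(x,y)<\varepsilon/4$ reads $|S_x\Delta S_y|<\tfrac{\varepsilon}{4}|U|$, while $d_{\mathcal{R},n}(x,y)\geq\varepsilon/2$ reads $|D|\geq\tfrac{\varepsilon}{2}|U|$, where $D=\{u\in Q_n:\mathcal{R}(T^u x)\neq\mathcal{R}(T^u y)\}$.

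Next I would isolate the ``good'' times. Put $K=\bigcup_i K_i$ and $G=\{u\in Q_n:T^u x\in K\text{ and }T^u y\in K\}$. The key observation is that a time in $D\cap G$ is exactly a time at which $T^u x,T^u y$ lie in distinct $K_i$'s: both lie in $K$, and since each $K_i$ is itself an $\mathcal{R}$-atom, disagreement of the $\mathcal{R}$-names forces them into different pieces. So it suffices to show $D\cap G\neq\emptyset$, for which I bound the bad set $D\setminus G$. If $u\in D\setminus G$ then $T^u x\notin K$ or $T^u y\notin K$; say $T^u x\notin K$. Then either $T^u x\in A\setminus K=\bigcup_{i\ge2}(P_i\setminus K_i)$, a ``leftover'' visit, or $T^u x\in P_1$, in which case the $\mathcal{R}$-names disagree only if $T^u y\in A$ (otherwise both would lie in $R_1=P_1$), so $u\in S_y\setminus S_x\subseteq S_x\Delta S_y$. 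Setting $L_x=\{u\in Q_n:T^u x\in A\setminus K\}$ and $L_y$ analogously, this yields the inclusion $D\setminus G\subseteq L_x\cup L_y\cup(S_x\Delta S_y)$.

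It then remains to bound the leftover times, and this is the one delicate point, since the constants must close up exactly. Summing the ratio estimate \eqref{eq:ration-ergodic-theorem-for-Ki} over $i=2,\dots,m$ and using \eqref{eq:Ki-exhaust-Pi} together with $\sum_{i\ge2}\mu(P_i)=\mu(A)$, the two contributions telescope: the mass terms contribute less than $\tfrac{\varepsilon}{8|\mathcal{P}|}$ in total and the error terms contribute $(m-1)\tfrac{\varepsilon}{8|\mathcal{P}|}$, for a grand total $<\varepsilon/8$. Hence $|L_x|<\tfrac{\varepsilon}{8}|S_x|\leq\tfrac{\varepsilon}{8}|U|$, and likewise $|L_y|<\tfrac{\varepsilon}{8}|U|$. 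Combining,
\[
|D\setminus G|\;\leq\;|L_x|+|L_y|+|S_x\Delta S_y|\;<\;\Big(\tfrac{\varepsilon}{8}+\tfrac{\varepsilon}{8}+\tfrac{\varepsilon}{4}\Big)|U|\;=\;\tfrac{\varepsilon}{2}|U|\;\leq\;|D|,
\]
so $D\cap G$ is nonempty and the required $u$ exists. The main thing to watch is precisely this balance: a crude bound of $\varepsilon/4$ on each $L_x$ would render the final inequality non-strict in the wrong direction, so one must exploit that the per-atom errors sum against $\sum_i\mu(P_i)=\mu(A)$ to gain the halving from $\varepsilon/4$ down to $\varepsilon/8$.
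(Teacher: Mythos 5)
Your proof is correct and follows essentially the same route as the paper: decompose the set of $\mathcal{R}$-disagreement times into the $\mathcal{A}$-disagreement times, the ``leftover'' visits to $A\setminus K$ controlled by \eqref{eq:ration-ergodic-theorem-for-Ki} and \eqref{eq:Ki-exhaust-Pi}, and the times where both orbits lie in $K$, then conclude the last set is nonempty and invoke the $\tau$-separation of the $K_i$. Your explicit bookkeeping of the $\varepsilon/8$ versus $\varepsilon/4$ constants is exactly the balance the paper's argument relies on.
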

\begin{proof}
Let $U=\{u\in Q_{n}\,:\, T^{u}x\in A\mbox{ or }T^{u}y\in A\}$. We
then have\begin{eqnarray*}
d_{\mathcal{R},n}(x,y) & = & \frac{\{u\in U\,:\, T^{u}x\notin A\mbox{ or}T^{u}y\notin A\}}{|U|}\;+\\
 &  & \;+\frac{\{u\in Q_{n}\,:\,\mathcal{R}(T^{u}x)\neq\mathcal{R}(T^{u}y)\mbox{ and }T^{u}x,T^{u}y\in A\}}{|U|}\end{eqnarray*}
The first term is just $d_{\mathcal{A},n}(x,y)$, so\[
\frac{\{u\in Q_{n}\,:\,\mathcal{R}(T^{u}x)\neq\mathcal{R}(T^{u}y)\mbox{ and }T^{u}x,T^{u}y\in A\}}{|U|}>d_{\mathcal{R},n}(x,y)-\frac{\varepsilon}{4}\geq\frac{\varepsilon}{4}\]
Now the set in the numerator of the left hand side can be written
as a sum of two terms, the first consisting of those $u$ for which
$T^{u}x\in A\setminus K$ or $T^{u}y\in A\setminus K$, and the second
of those for which both $T^{u}x,T^{u}y\in K$. Using \eqref{eq:ration-ergodic-theorem-for-Ki},
we see that the first term contributes at most $\varepsilon/4$. It
follows that\[
\frac{|\{u\in Q_{n}\,:\,\mathcal{R}(T^{u}x)\neq\mathcal{R}(T^{u}y)\mbox{ and }T^{u}x,T^{u}y\in K\}|}{|U|}>0\]
Thus there is at least one $u\in Q_{n}$ such that $T^{u}x\in K_{i}$
and $T^{u}y\in K_{j}$ for some $i\neq j$. Hence $d(T^{u}x,T^{u}y)>\tau$,
and so $d_{n}^{\infty}(x,y)>\tau$.
\end{proof}
Fix $n$ and let $E_{1},\ldots,E_{s}$ be a minimal $(d_{\mathcal{A},n},\varepsilon/4)$-almost
cover of $A$, i.e. $\diam(E_{i},d_{\mathcal{A},n})<\varepsilon/4$
and $\mu(A\setminus\bigcup_{i=1}^{s}E_{i})>(1-\varepsilon/4)\mu(A)$.
Since $\rho_{slow}(A,d_{\mathcal{A},n},\varepsilon/4)<\beta$, for
large enough $n$ we will have $s<2^{n^{\beta}}$.

Let $E'_{i}=E_{i}\cap A_{0}$. For each $i=1,\ldots,s$, if $\sep(E'_{i},d_{\mathcal{R},n},\varepsilon/2)<2^{n^{(1+\gamma)/2}}$
then we can cover $E'_{i}$ with $2^{n^{(1+\gamma)/2}}$ $d_{\mathcal{R},n}$-balls
of radius $\varepsilon/2$. If this held for all $i=1,\ldots,s$ we
would have a collection of at most $2^{n^{\beta}}\cdot2^{n^{(1+\gamma)/2}}$
sets of $d_{\mathcal{R},n}$-diameter $\varepsilon$ which completely
cover $\bigcup_{i=1}^{s}E'_{i}$, and since \[
\mu(A\setminus\bigcup_{i=1}^{s}E'_{i})\leq\mu(A\setminus A_{0})+\mu(A\setminus\bigcup_{i=1}^{s}E_{i})\leq\varepsilon/2\]
this would be a $(d_{\mathcal{R},n},\varepsilon)$-cover of $A$.
Since $2^{n^{\beta}}\cdot2^{n^{(1+\gamma)/2}}<2^{n^{\gamma}}$ for
all large enough $n$ and since $\rho_{slow}(A,d_{\mathcal{R},n},\varepsilon)>\gamma$,
there must be arbitrarily large $n$ for which the above fails, i.e.
there is a set $E\subseteq A_{0}$ of $d_{\mathcal{A},n}$-diameter
$\leq\varepsilon/4$ and $\sep(E,d_{\mathcal{R},n},\varepsilon/2)\geq2^{n^{(1+\gamma)/2}}$.
The last condition means that there is a collection $I\subseteq E$
of size $|I|\geq2^{n^{(1+\gamma)/2}}$ such that $d_{\mathcal{R},n}(x,y)\geq\varepsilon/2$
for distinct $x,y\in I$. Applying Lemma \ref{lem:from-pattern-to-bowen-metrics},
we have $d_{n}^{\infty}(x,y)\geq\tau$ for $x,y\in I$. Thus $\sep(\Omega,d_{n},\tau)\geq\sep(I,d_{n},\tau)>2^{n^{\gamma}}$
, and since this holds for infinitely many $n$ and since $(1+\gamma)/2>1$,
we conclude from Lemma \ref{lem:lipschitz-constant-bound} that $\bdim(\Omega,d)=\infty$.

This concludes the proof of Theorem \ref{thm:large-s-implies-no-diff-model}.

As a final remark, note that the proof above would have been much
simplified if we could find a compact set $K_{1}$ such that $\mu(P_{1}\setminus K_{1})$
is very close to $0$. However, since $\mu(P_{1})=\infty$, which
may not happen; that is, {}``most'' of the mass of $P_{1}$ may
accumulate near the core. The assumption of non-uniform slow entropy
in Theorem \ref{thm:large-s-implies-no-diff-model} allows us to avoid
this problem by {}``relativising'' the problem to $A$.

\subsection{\label{sub:Cutting-and-stacking}Cutting and stacking}

Our next task is to construct a $\mathbb{Z}^{2}$ action which does
not have uniform slow entropy and with $\rho_{slow}>1$. We shall
combine several methods. The first is cutting and stacking, though
a better name in the multidimensional case might be cutting and tiling.
We shall only require the rank-1 version, which we describe next.

Begin with an abstract atomless $\sigma$-finite Lebesgue space of
infinite measure, which we refer to as the \emph{pool}. We shall define
an action on increasing subsets of the pool, eventually arriving at
the desired action on a part or all of the pool. It is convenient
also to imagine that, as we define our subset and action, we also
color the points we use, thus defining a partition of the pool according
to the colors. We shall use this informal coloring procedure, since
a more formal definition does not seem to add much. 

Recall that any measurable subset $A$ of a Lebesgue space with measure
$w>0$ can be identified, in a measurable and measure-preserving fashion,
with an interval of length $w$ in the real line. An \emph{arrangement
of radius $r$ and width $w\subseteq\mathbb{R}^{2}$ }is an assignment
$u\mapsto A_{u}$ from $u\in Q_{r}$ to pairwise disjoint subsets
$A_{u}$ of the pool. The $A_{u}$ have measure $w$ and we identify
them with intervals of length $w$. Each $A_{u}$ is colored monochromatically
by a color which may depend on $u$. If $x\in A_{u}$ we call $u$
the position of $x$, and we say that $x$ is in the arrangement if
$x\in\bigcup_{u\in Q_{r}}A_{u}$. 

Given an arrangement as above there is a partial measure-preserving
action $T$ on $\bigcup_{u\in Q_{r}}A_{u}$, defined as follows. If
$x\in A_{u}$ and $v\in\mathbb{Z}^{2}$ is such that $u+v\in Q_{r}$,
then $T^{v}x\in A_{u+v}$ is the point corresponding to $x$ when
the intervals $A_{u}$ and $A_{u+v}$ are identified by a translation
(this is the reason we identify $A_{u}$ with intervals: it provides
canonical isomorphisms between them). 

The cutting and tiling construction is a recursive procedure in which,
at the $i$-th stage, one has an arrangement $\mathcal{A}_{i}=(A_{i,u})_{u\in Q_{r(i)}}$
of radius $r(i)$ and width $w(i)$ in which each $A_{i,u}$ is monochromatic,
along with the associated partial actions. One begins with some collection
of arrangements of radius $0$ (recall that $|Q_{0}|=1$). Assuming
we have carried out the construction up to step $i$, one constructs
$\mathcal{A}_{i+1}$ using the following two steps:
\begin{description}
\item [{Cutting}] Choose an integer $m$ and partition each interval $A_{i,u}$
into $m$ equal subintervals $A_{i,u,j}$, $1\leq j\leq m$, each
of length $w(i+1)=w(i)/m$. For each such $j$ we obtain a new arrangement
$\mathcal{A}_{i,j}=(A_{i,u,j})_{u\in Q_{r(i)}}$ of radius $r(i)$
and width $w(i+1)$.
\item [{Tiling}] Choose $r(i+1)$ and a function $\psi:\{1,\ldots,m\}\rightarrow Q_{r(i+1)-r(i)}$
such that $\left\Vert \psi(j_{1})-\psi(j_{2})\right\Vert \geq2r(i)$
for all $1\leq j_{1}<j_{2}\leq m$. For each $1\leq j\leq m$, translate
$\mathcal{A}_{i,j}$ to $\psi(j)$, thus defining $\mathcal{A}_{i+1}$
at the sites $\varphi(j)+u$, $u\in Q_{r(i)}$. More precisely, for
$u\in Q_{r(i)}$ set $A_{i+1,\psi(j)+u}=A_{i,u,j}$. Note that $\varphi(j)+u\in Q_{r(i+1)}$,
and no conflicts occur, due to our assumptions on $\psi$. Finally,
to the remaining $u\in Q_{r(i+1)}$ assign new intervals taken from
the pool, and assign a color to each interval.
\end{description}
Let $\Omega_{i}=\bigcup_{u\in Q_{r(i)}}A_{i,u}$ and $\Omega=\bigcup_{i=1}^{\infty}\Omega_{i}$,
and let $\mathcal{P}$ denote the measurable partition of $\Omega$
into monochromatic sets. Note that the partial action defined on $\Omega_{i+1}$
extends the partial action on $\Omega_{i}$, so on $\Omega$ we have
a well-defined measure-preserving partial action $T$. We would like
conditions which ensure that $T$ is an a.e.-defined action. To this
end, let $\varepsilon_{i}$ denote the total mass of points in $\Omega_{i}$
which in in $\mathcal{A}_{i+1}$ are located within distance $i$
of the boundary of $Q_{r(i+1)}$. If $\sum\varepsilon_{i}<\infty$,
then by Borel-Cantelli lemma, for every $v\in\mathbb{Z}^{k}$, a.e.
point $x\in\Omega$ belongs at some stage to an arrangement in which
is located at distance at least $\left\Vert v\right\Vert $ from the
boundary, and hence $T^{v}x$ is defined. Thus $T$ is a $\mathbb{Z}^{2}$-action
defined a.e. on $\Omega$. 

The following fact is standard:
\begin{prop}
\label{pro:ergodicity-of-rank-1-actions}Rank-one cutting and stacking
constructions produce ergodic actions.
\end{prop}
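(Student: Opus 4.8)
The plan is to establish ergodicity via the standard Rokhlin–tower approximation argument: I will show that any measurable $T$-invariant set $B \subseteq \Omega$ has measure that is either $0$ or, once restricted to any finite-measure reference set, forces $B$ to exhaust that set up to null sets. The engine is that the columns (the intervals $A_{i,u}$ for fixed $i$) become arbitrarily fine approximating partitions, so that the relative density of $B$ within a column can be read off in the limit. Concretely, I would fix a $T$-invariant set $B$ with $\mu(B)>0$ and work inside a single finite-measure arrangement level, say intersecting with some $\Omega_i$ of positive finite measure.

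First I would record the key combinatorial feature of the rank-one construction: at stage $i$ the space $\Omega_i$ is partitioned into the intervals $(A_{i,u})_{u \in Q_{r(i)}}$, all of common width $w(i)$, and the partial action $T$ identifies these intervals with one another by translation. Since each $A_{i+1,\cdot}$ is obtained by cutting the stage-$i$ intervals into $m$ equal pieces and retiling, the algebra generated by $\bigcup_i \{A_{i,u}\}$ generates $\mathcal{B}$ on $\Omega$ modulo $\mu$; this is the statement that the column intervals form a refining sequence separating points. Hence for the invariant set $B$ and any $\delta>0$, I can find a stage $i$ and a union $W = \bigcup_{u \in S} A_{i,u}$ of whole stage-$i$ intervals (some $S \subseteq Q_{r(i)}$) with $\mu(B \,\Delta\, W) < \delta \, \mu(W)$.

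The heart of the argument is then a \emph{Rokhlin-tower / saturation} step. Because the intervals $A_{i,u}$ are all translates of one another under $T$, and $B$ is $T$-invariant, the relative density $\mu(B \cap A_{i,u})/w(i)$ must be essentially the same for every $u \in Q_{r(i)}$: if two positions $u,u'$ lie in $Q_{r(i)}$ then $T^{u'-u}$ carries $A_{i,u}$ onto $A_{i,u'}$ measure-preservingly and maps $B$ to $B$, so the densities agree exactly. Thus $B$ fills each column to the same proportion $\theta$. Feeding this back into the approximation $\mu(B\,\Delta\,W)<\delta\,\mu(W)$, I get that $\theta$ is within $\delta$ of $0$ or $1$ on the relevant scale; letting $\delta \to 0$ forces $\theta \in \{0,1\}$, and since this density is independent of the stage $i$ (the retiling in the Tiling step preserves the density because it merely rearranges the cut pieces), it is a single number for all of $\Omega$. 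If $\theta = 1$ then $\mu(\Omega \setminus B)=0$; if $\theta = 0$ then $\mu(B)=0$, contradicting our choice. This yields ergodicity.

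The step I expect to be the main obstacle is the uniformity of the column density across \emph{different} stages, i.e. verifying that the retiling operation in the Tiling step genuinely preserves the proportion $\theta$ rather than only doing so within a single level. The subtlety is that at each stage one adjoins \emph{new} intervals drawn from the pool (the sites of $Q_{r(i+1)}$ not covered by the translated sub-columns), and one must check that these fresh intervals do not dilute or inflate the density of $B$ inconsistently; here invariance of $B$ is what saves the argument, since each new interval is, at a later stage, cut and tiled into positions that are $T$-translates of old ones, propagating the common density $\theta$ throughout. Making this propagation precise—ideally by invoking the fact that $\bigcup_i \Omega_i = \Omega$ together with the summability condition $\sum \varepsilon_i < \infty$ that guarantees $T$ is a.e.\ defined—is the technical crux, but it is routine once the density-equality within a level is in hand.
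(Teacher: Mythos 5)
The paper does not actually prove this proposition; it is stated as a standard fact with no argument supplied, so there is nothing to compare your proof against line by line. Your argument is the standard one for ergodicity of rank-one constructions and it is essentially correct: the decisive point is exactly the one you isolate, namely that for a $T$-invariant set $B$ the translations $T^{u'-u}$ identify the stage-$i$ columns $A_{i,u}$ with one another and hence force $\mu(B\cap A_{i,u})/w(i)$ to be a single constant $\theta_i$ over all $u\in Q_{r(i)}$, after which approximating $B$ (restricted to some $\Omega_{i_0}$ of positive finite measure) by unions of whole columns, or equivalently invoking Lebesgue density/martingale convergence along the refining column partitions, forces $\theta\in\{0,1\}$. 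Two small remarks. First, the cross-stage consistency you flag as the ``technical crux'' is in fact immediate and requires no propagation to later stages: the freshly adjoined pool intervals at stage $i+1$ are already $T$-translates (under the stage-$(i+1)$ partial action) of the cut pieces of the old columns, so the density equality at stage $i+1$ covers them at once, and $\theta_{i+1}=\theta_i$ follows because each old column is the disjoint union of its $m$ cut pieces, all of density $\theta_{i+1}$. Second, your appeal to the columns generating $\mathcal{B}\bmod\mu$ implicitly uses $w(i)\to 0$; this holds in the paper's construction (where the cutting number $|\Gamma_i|\to\infty$) and is the standing assumption in any genuine rank-one construction, but it is worth stating.
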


\subsection{\label{sub:a-rank-1-action}A rank-1 construction}

As a preliminary step in the proof of Theorem \ref{thm:non-realizable actions-exist}
we construct a certain action $(\Omega_{0},\mathcal{B}_{0},\mu_{0},T_{0})$
in the manner described above. We use only two colors, $0$ and $1$.
The construction will be determined by the sequence $r(i)\rightarrow\infty$,
upon which we place a few requirements. Write\[
s(i)=r(i+1)-r(i)\]
and\[
m(i)=s(i)^{1/3}\]
We assume $1\leq r(1)<r(2)<\ldots$ grows quickly enough that 
\begin{enumerate}
\item $m(i)$ is an integer and $m(i)>2r(i)$,
\item $\prod_{j\leq i}r(j)=r(i)^{1+o(1)}$
\end{enumerate}
Begin at step $i=1$ with a single arrangement of radius $r(0)=0$,
width $1$, and color $1$. At later steps we will only add mass colored
$0$.

For the inductive step, suppose we have already constructed the arrangement
$\mathcal{A}_{i}=(A_{i,u})_{u\in Q_{r(i)}}$ of width $w(i)$. Define
\[
\Gamma_{i}=Q_{s(i)}\cap m(i)\mathbb{Z}^{2}\]
Note that \[
|\Gamma_{i}|=(2\frac{s(i)}{m(i)}+1)^{2}=r(i+1)^{3/2+o(1)}\]
Cut $\mathcal{A}_{i}$ into $|\Gamma_{i}|$ sub-arrangements of equal
width $w(i+1)=w(i)/|\Gamma_{i}|$, which we denote $\mathcal{A}_{i,u}$,
$u\in\Gamma_{i}$. Let $\psi_{i}:\{1,\ldots,|\Gamma_{i}|\}\rightarrow\Gamma_{i}$
be a fixed bijection, and form $\mathcal{A}_{i+1}$ from this data
as described in the previous section, coloring all new intervals with
the color $0$. Note that $\psi_{i}$ satisfies the requirements in
the tiling stage by property (1) of the growth of $r(i)$.

Let $(\Omega_{0},\mathcal{B}_{0},\mu_{0},T_{0})$ denote the resulting
action, which is ergodic by Proposition \ref{pro:ergodicity-of-rank-1-actions}.
Let $A\subseteq\Omega_{0}$ denote the set of points colored $1$,
and note that $\mu(A)=1$ since we did not add any more points with
this color after the first step. 

It will be useful to have a more direct description of the coloring
of $Q_{r(i)}$ derived from $\mathcal{A}_{i}$. For sets $U,V\subseteq\mathbb{Z}^{k}$
define $U+V=\{u+v\,:\, u\in U\,,\, v\in V\}$, and similarly for more
than two summands. Also, abbreviate $U+v=U+\{v\}$. Let \[
\Gamma_{i}^{*}=\Gamma_{1}+\ldots+\Gamma_{i}\]
Then $u\in Q_{r(i)}$ is colored $1$ if and only if \[
u\in\Gamma_{1}+\ldots+\Gamma_{i-1}=\Gamma_{i-1}^{*}\]
Note that, because of the super-exponential growth of $m(i)$, such
a $u$ has a unique representation as $u=\sum_{j=1}^{i}u_{j}$ with
$u_{j}\in\Gamma_{j}$. We also have estimates on $|\Gamma_{i}^{*}|$:
\begin{claim}
$|\Gamma_{i}^{*}|=r(i-1)^{3/2+o(1)}$\end{claim}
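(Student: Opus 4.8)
The plan is to convert the geometric quantity $|\Gamma_i^*|$ into an exact product of the single-stage sizes $|\Gamma_j|$, and then collapse that product using the super-exponential growth of the radii encoded in property (2).

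First I would invoke the unique-representation fact recorded just above the claim. Because the successive $\Gamma_j$ live on widely separated scales — this is exactly what the tiling separation $\|\psi(j_1)-\psi(j_2)\|\ge 2r(j)$, equivalently $m(j)>2r(j)$, guarantees — every $u\in\Gamma_i^*=\Gamma_1+\cdots+\Gamma_i$ decomposes uniquely as $u=\sum_{j}u_j$ with $u_j\in\Gamma_j$. Hence the addition map $\prod_j\Gamma_j\to\Gamma_i^*$ is a bijection, so
\[
|\Gamma_i^*|=\prod_j|\Gamma_j|.
\]
This is the only place the scale-separation hypothesis is used; it replaces an awkward lattice count by a clean product.

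Next I would feed in the single-stage estimate for $|\Gamma_j|$ already established in the excerpt and pass to logarithms, turning the product into a sum of the form $\log|\Gamma_i^*|=\tfrac32\sum_j\log r(j)\,(1+o(1))$, with each term a fixed multiple of the logarithm of the governing radius at stage $j$. The decisive input is property (2), namely $\prod_{j\le m}r(j)=r(m)^{1+o(1)}$, which says precisely that $\sum_{j\le m}\log r(j)=(1+o(1))\log r(m)$: the radii grow so fast that a partial sum of their logarithms is absorbed, up to a $(1+o(1))$ factor, into a single dominant term. Applying this collapses the product to a single power of the governing radius, and the target is $|\Gamma_i^*|=r(i-1)^{3/2+o(1)}$.

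The one genuinely delicate point — and the step I expect to be the main obstacle — is the index bookkeeping in this collapse. Under the super-exponential growth forced by property (2), an off-by-one in the index of the dominant radius changes the estimate enormously, so I would take care to pin down exactly which radius governs the sum and how the telescoping interacts with the $o(1)$ errors, rather than treating the collapse as automatic; this is what pins the exponent base to $r(i-1)$ and produces the stated $r(i-1)^{3/2+o(1)}$. Once the bijection $|\Gamma_i^*|=\prod_j|\Gamma_j|$ and property (2) are in hand, no further ideas are required.
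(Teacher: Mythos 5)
Your strategy is essentially the paper's: control $|\Gamma_1+\cdots+\Gamma_{i-1}|$ by the product $\prod_j|\Gamma_j|$ and collapse that product using property (2). The paper gets the upper bound from the trivial sumset inequality $|A+B|\le|A|\cdot|B|$ and a matching lower bound from $|A+B|\ge|A|$; you instead upgrade the upper bound to an equality via the unique-representation fact (the addition map $\prod_j\Gamma_j\to\Gamma_i^*$ is a bijection because $\diam(\Gamma_1+\cdots+\Gamma_{j-1})\le 2(r(j)-r(1))<m(j)$), which subsumes the lower bound. That is a legitimate and slightly cleaner variant, and your identification of the separation condition $m(j)>2r(j)$ as the hypothesis driving unique representation is correct.

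The gap is precisely the step you flag as ``the main obstacle'' and then do not carry out: determining which radius the product collapses to. With the single-stage estimate $|\Gamma_j|=r(j+1)^{3/2+o(1)}$, your product formula gives
\[
|\Gamma_i^*|=\prod_{j=1}^{i}|\Gamma_j|=\Bigl(\prod_{j=2}^{i+1}r(j)\Bigr)^{3/2+o(1)}=r(i+1)^{3/2+o(1)}
\]
by property (2); the dominant radius is $r(i+1)$, not $r(i-1)$, and under the super-exponential growth of $r$ these differ enormously, as you yourself observe. So the conclusion you assert is not the one your computation produces. (The paper's displayed statement is itself mis-indexed: its proof actually estimates $|\Gamma_1+\cdots+\Gamma_{i-1}|=|\Gamma_{i-1}^*|$, bounding it below by $|\Gamma_{i-1}|$ and above by $\prod_{j=1}^{i-1}|\Gamma_j|$, and obtains $r(i)^{3/2+o(1)}$ --- the form actually used later in estimating $\mu_0(\Omega_0)$ and the recurrence rate. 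A writeup that leaves the index undetermined, or pins it to $r(i-1)$, does not establish the estimate the rest of the construction needs, so you must do the bookkeeping explicitly rather than defer it.)
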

\begin{proof}
On the one hand, \[
|\Gamma_{1}+\ldots+\Gamma_{i-1}|\geq|\Gamma_{i-1}|=r(i)^{3/2+o(1)}\]
On the other hand,\[
|\Gamma_{1}+\ldots+\Gamma_{i-1}|\leq\prod_{j=1}^{i-1}|\Gamma_{j}|=\prod_{j=1}^{i-1}r(j+1)^{3/2+o(1)}=r(i)^{3/2+o(1)}\]
where the last equality is by property (2) of the growth of $r(i)$.
Combining these gives the claim.\end{proof}
\begin{claim}
$\mu_{0}(\Omega_{0})=\infty$.\end{claim}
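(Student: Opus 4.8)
The plan is to compute the total mass $\mu_0(\Omega_i)$ of each stage explicitly and show it diverges; since $\Omega_i\uparrow\Omega_0$, continuity of measure from below then gives $\mu_0(\Omega_0)=\lim_{i\to\infty}\mu_0(\Omega_i)=\infty$. First I would record that the arrangement $\mathcal{A}_i$ consists of exactly $|Q_{r(i)}|=(2r(i)+1)^2$ intervals, each of width $w(i)$, so that
\[
\mu_0(\Omega_i)=|Q_{r(i)}|\,w(i)=(2r(i)+1)^2\,w(i).
\]
Here it is worth noting explicitly that cutting-and-tiling preserves the previously accumulated mass while adding only nonnegative new mass, so that $\mu_0(\Omega_i)$ is genuinely nondecreasing and is given by this formula at every stage.

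Next I would track the width. At the cutting stage each interval is divided into $|\Gamma_i|$ equal pieces, so the width passes from $w(i)$ to $w(i+1)=w(i)/|\Gamma_i|$; the tiling stage introduces new intervals of exactly this width $w(i+1)$, and so leaves the width unchanged. Since the construction begins with a single interval of width $1$, this telescopes to
\[
w(i)=\prod_{j=1}^{i-1}|\Gamma_j|^{-1}.
\]

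The heart of the estimate is to control this product. Using the already-established estimate $|\Gamma_j|=r(j+1)^{3/2+o(1)}$ together with requirement (2), namely $\prod_{j\le i}r(j)=r(i)^{1+o(1)}$, I would reindex $j+1\mapsto k$ so that the top index becomes $k=i$, and compute
\[
\prod_{j=1}^{i-1}|\Gamma_j|=\prod_{k=2}^{i}r(k)^{3/2+o(1)}=\Bigl(\prod_{k=2}^{i}r(k)\Bigr)^{3/2+o(1)}=r(i)^{3/2+o(1)},
\]
where the omitted factor $r(1)$ is a constant and hence absorbed into $r(i)^{o(1)}$. Substituting into the mass formula yields
\[
\mu_0(\Omega_i)=(2r(i)+1)^2\cdot r(i)^{-3/2+o(1)}=r(i)^{1/2+o(1)},
\]
which tends to infinity since $r(i)\to\infty$ and the net exponent $2-\tfrac32=\tfrac12$ is strictly positive.

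The computation is entirely elementary, and I do not expect a genuine obstacle. The only points requiring care are the bookkeeping described above (that the width telescopes with the product running to $i-1$, not $i$ — this matters since including $|\Gamma_i|=r(i+1)^{3/2+o(1)}$ would change the asymptotics), and the uniform handling of the $o(1)$ exponents when collapsing $\prod r(j+1)^{3/2}$ via requirement (2). The conceptual content is simply that the number of tiles grows like $r(i)^{2}$ while the width shrinks only like $r(i)^{-3/2+o(1)}$, so the surplus factor $r(i)^{1/2}$ forces the total measure to infinity.
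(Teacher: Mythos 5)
Your proof is correct and is essentially the paper's argument in mildly different packaging: the paper compares the count $|Q_{r(i)}|=r(i)^{2+o(1)}$ of intervals in $\mathcal{A}_i$ against the $r(i)^{3/2+o(1)}$ intervals inherited from $\mathcal{A}_{i-1}$, whereas you track the width $w(i)=\prod_{j<i}|\Gamma_j|^{-1}=r(i)^{-3/2+o(1)}$ directly; both reduce to the same use of property (2) and the estimate $|\Gamma_j|=r(j+1)^{3/2+o(1)}$, and both conclude $\mu_0(\Omega_i)=r(i)^{1/2+o(1)}\to\infty$.
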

\begin{proof}
The total number of intervals in $\mathcal{A}_{i}$ is $|Q_{r(i)}|=r(i)^{2+o(1)}$.
The number of intervals in $\mathcal{A}_{i}$ which already appeared
in $\mathcal{A}_{i-1}$ is $|\Gamma_{i}^{*}|=r(i)^{3/2+o(1)}$. Since
the ratio tends to $\infty$ as $i\rightarrow\infty$, we have $\mu_{0}(\Omega_{i}\setminus\Omega_{i-1})\rightarrow\infty$,
so $\mu_{0}(\Omega)=\infty$.
\end{proof}
We do not require the next claim, since without it we can simply pass
to the factor of $(\Omega_{0},\mathcal{B}_{0},\mu_{0},T_{0})$ corresponding
to the smallest invariant $\sigma$-algebra containing $A$. Therefore
we only briefly indicate the proof.
\begin{claim}
\label{cla:A-generates}The partition $\mathcal{P}=\{\Omega_{0}\setminus A,A\}$
generates for $T_{0}$.\end{claim}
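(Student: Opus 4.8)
The plan is to show that the position of a point within each arrangement $\mathcal{A}_i$ can be read off from its $\mathcal{P}$-name, and then to invoke the fact that these positions determine the point since the widths $w(i)$ shrink to $0$. Concretely, write $\mathcal{F}=\sigma\!\left(\bigvee_{v\in\mathbb{Z}^2}T_0^v\mathcal{P}\right)$ for the smallest invariant $\sigma$-algebra making $\mathcal{P}$ measurable; generation is the assertion $\mathcal{F}=\mathcal{B}_0\bmod\mu_0$. For $x$ in the construction at stage $i$, let $w_i(x)\in Q_{r(i)}$ denote its position, so that $x\in A_{i,w_i(x)}$. Since the intervals $A_{i,u}$ have width $w(i)\to0$ and generate $\mathcal{B}_0$, it suffices to prove that each position map $x\mapsto w_i(x)$ is $\mathcal{F}$-measurable; equivalently, that for a.e. pair $x,y$ sharing the same $\mathcal{P}$-name one has $w_i(x)=w_i(y)$ for all large $i$, whence $x=y$ a.e.

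First I would record what the $\mathcal{P}$-name actually sees. Because color $1$ is never added after the first step, the color-$1$ sites of $Q_{r(i)}$ are exactly $\Gamma_{i-1}^*=\Gamma_1+\dots+\Gamma_{i-1}$. Hence for a point at position $w_i(x)$, the set of visits to $A$ observed along its orbit, restricted to the window $Q_{r(i)}-w_i(x)$ about the origin, is precisely the translate $\Gamma_{i-1}^*-w_i(x)$. By the Borel--Cantelli argument already used to make $T_0$ a.e. defined (the hypothesis $\sum\varepsilon_i<\infty$), for a.e. $x$ the position $w_i(x)$ lies at distance tending to infinity from the boundary of $Q_{r(i)}$; so from the name of $x$ we observe the full local pattern $\Gamma_{i-1}^*-w_i(x)$ on an arbitrarily large neighbourhood of the origin as $i\to\infty$.

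The heart of the argument is to recover $w_i(x)$ from this observed pattern. Here I would exploit the multiscale structure of $\Gamma_{i-1}^*$: each $\Gamma_j=Q_{s(j)}\cap m(j)\mathbb{Z}^2$ is a grid, the sumset $\Gamma_{i-1}^*$ is a union of well-separated translated copies of $\Gamma_{i-2}^*$ indexed by $\Gamma_{i-1}$, and because $m(j)>2r(j)$ grows super-exponentially these scales are cleanly separated and every $w\in\Gamma_{i-1}^*$ admits the unique representation $w=\sum_{j<i}\gamma_j$ with $\gamma_j\in\Gamma_j$ noted above. Consequently, examining the observed color-$1$ pattern near the origin determines the offset of the origin modulo the finest grid $\Gamma_1$, then modulo the next grid, and so on; peeling off scales recovers each coordinate $\gamma_j$, and by uniqueness of representation this pins down $w_i(x)$ once the observed window is large enough to contain a full fundamental block at each relevant scale. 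Thus two interior points with identical names must satisfy $w_i(x)=w_i(y)$.

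Finally, I would assemble these steps: on the conull interior set the reconstruction of $w_i(\cdot)$ is a measurable function of the $\mathcal{P}$-name, so $x\mapsto w_i(x)$ is $\mathcal{F}$-measurable and each level $A_{i,u}\in\mathcal{F}$; letting $i\to\infty$ gives $\mathcal{F}=\mathcal{B}_0\bmod\mu_0$. The main obstacle is the combinatorial recovery step, namely making precise that a translate $\Gamma_{i-1}^*-w$ viewed on a bounded window determines $w$, i.e. that the separated-scale structure leaves the pattern no nontrivial translational coincidences on large windows. Alternatively, as the text notes, the claim can be bypassed entirely by passing to the factor generated by $A$; the argument above is what is required if one insists on generation for $T_0$ itself.
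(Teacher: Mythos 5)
Your overall skeleton is the same as the paper's: show that the $\mathcal{P}$-name of $x$ determines its position $u$ in $\mathcal{A}_i$, conclude that $\bigvee_{v\in Q_{2r(i)}}T_0^{v}\mathcal{P}$ refines the partition of $\Omega_i$ into the intervals $A_{i,u}$ of width $w(i)\to0$, and deduce generation. Where you diverge is in the recovery step, and that is exactly the step you leave open: you propose a multiscale ``peeling'' argument (determine the offset modulo $\Gamma_1$, then modulo $\Gamma_2$, and so on) and candidly flag as ``the main obstacle'' the question of whether a translate of $\Gamma_{i-1}^{*}$ seen on a bounded window has nontrivial translational coincidences. As written, that is a genuine gap: the heart of the claim is precisely this recovery, and you have not carried it out.

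The gap is easy to close, however, and the paper closes it with a one-line trick that your framing obscures. The window the paper uses is $Q_{2r(i)}$ centered at $x$, which contains \emph{all} of $\Gamma_i^{*}-u$ for every $u\in Q_{r(i)}$; so one observes the entire finite return set, not a partial window of it, and your worry about partial-window ambiguity does not arise (two distinct translates of a nonempty finite set are distinct as sets, since a finite set is invariant under no nonzero translation). The paper then extracts $u$ explicitly: each $\Gamma_j=Q_{s(j)}\cap m(j)\mathbb{Z}^2$ is symmetric about the origin, hence so is $\Gamma_i^{*}$, hence its centroid is $0$, and therefore $u=-\frac{1}{|R_{2r(i)}(A,x)|}\sum_{v\in R_{2r(i)}(A,x)}v$. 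This replaces your scale-by-scale reconstruction (and the Borel--Cantelli interiority reduction, which becomes unnecessary once the window is taken to be $Q_{2r(i)}$ rather than $Q_{r(i)}-u$). Your peeling argument could be made to work, but it is doing combinatorial labor that the symmetry of $\Gamma_i^{*}$ renders superfluous; if you keep your route, you must actually prove the no-coincidence statement rather than assert it.
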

\begin{proof}
We claim that if $u\in Q_{r(i)}$ and $x\in A_{i,u}$ then $u$ can
be recovered from the $(\mathcal{P},2r(i))$-name of $x$. Indeed,
let it is easy to see that \[
R_{2r(i)}(A,x)=\Gamma_{i}^{*}-u\]
Since $\Gamma_{i}^{*}$ is symmetric about the axes, we can recover
$u$ from $R_{2r(i)}(A,x)$ by \[
u=-\frac{1}{|R_{2r(i)}(A,x)|}\sum_{v\in R_{2r(i)}(A,x)}v\]
It follows that $\bigvee_{u\in Q_{2r(i)}}T^{u}\mathcal{P}$ partitions
$\Omega_{i}$ into intervals of length $w(i)$, and since $w(i)\rightarrow0$,
we find that $\bigvee_{u\in\mathbb{Z}^{2}}T^{u}\mathcal{P}$ separates
points.\end{proof}
\begin{claim}
\label{cla:zero-slow-entropy}$\rho_{slow}(T_{0},\mu_{0})=0$.\end{claim}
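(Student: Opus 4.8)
The plan is to compute the slow entropy through the generating partition $\mathcal{P}=\{A,\Omega_{0}\setminus A\}$ furnished by Claim~\ref{cla:A-generates}, and to show $\rho_{slow}(T_{0},\mu_{0},\mathcal{P})=0$; since $\mathcal{P}$ generates, Lemma~\ref{lem:covering-numbers-of-generating-partitions} then yields $\rho_{slow}(T_{0},\mu_{0})=0$. Because $A$ is exactly the core of $\mathcal{P}$, the restriction of $d_{\mathcal{P},n}$ to $A$ coincides with the return-pattern distance $d_{A,n}$ of Section~\ref{sub:Connections-with-recurrence}, so it suffices to bound $N(A,d_{A,n},\varepsilon)$. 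The crucial reduction is that two points $x,y$ with $R_{n}(A,x)=R_{n}(A,y)$ satisfy $d_{A,n}(x,y)=0$, so each distinct value of $R_{n}(A,\cdot)$ determines a set of $d_{A,n}$-diameter $0$. Hence $N(A,d_{A,n},\varepsilon)$ is bounded by the number of distinct return patterns occurring on a subset of $A$ of measure at least $(1-\varepsilon)\mu(A)$. If I can show this count is at most $n^{C}$ for a fixed $C$, then $\log\log N(A,d_{A,n},\varepsilon)/\log n\to 0$ for every fixed $\varepsilon$, and the claim follows.

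To count patterns I would first record the rigidity coming from the construction: fixing $n$ and the index $i$ with $r(i)\le n<r(i+1)$, any point $x\in A$ sitting at position $u_{j}\in\Gamma_{j-1}^{*}$ in the arrangement $\mathcal{A}_{j}$ with $u_{j}+Q_{n}\subseteq Q_{r(j)}$ has its entire $(\mathcal{P},n)$-name already determined at stage $j$, so that $R_{n}(A,x)=(\Gamma_{j-1}^{*}-u_{j})\cap Q_{n}$. The key collapse is then to descend from $\Gamma_{j-1}^{*}$ to $\Gamma_{i}^{*}$. Writing $\Gamma_{j-1}^{*}=\Gamma_{i}^{*}+\Lambda$ with $\Lambda=\Gamma_{i+1}+\dots+\Gamma_{j-1}$, the spacing built into the tiling step together with the super-exponential growth of $r(\cdot)$ (requirement (1)) forces the distinct translates $\Gamma_{i}^{*}+\lambda$, $\lambda\in\Lambda$, to be separated by more than $\diam(\Gamma_{i}^{*})+n$. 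Decomposing $u_{j}=w+\lambda_{0}$ uniquely with $w\in\Gamma_{i}^{*}$ and $\lambda_{0}\in\Lambda$, only the single copy $\Gamma_{i}^{*}-w$ meets $Q_{n}$, so $R_{n}(A,x)=(\Gamma_{i}^{*}-w)\cap Q_{n}$. As $w$ ranges over $\Gamma_{i}^{*}$ this produces at most $|\Gamma_{i}^{*}|$ patterns, and the estimate on $|\Gamma_{i}^{*}|$ from the preceding claim gives $|\Gamma_{i}^{*}|\le n^{3/2+o(1)}$ since $r(i)\le n$.

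The step I expect to be the main obstacle is the boundary control needed to make the rigidity computation apply to a $(1-\varepsilon)$-fraction of $A$. The difficulty is that the color-$1$ positions $\Gamma_{j-1}^{*}$ extend essentially out to $\partial Q_{r(j)}$, so working at the first stage $j$ with $r(j)>n$ would leave a non-negligible fraction of color-$1$ points within distance $n$ of the boundary, where the name is not yet frozen. I would resolve this by choosing $j$ much larger than $i$: all color-$1$ atoms of $\mathcal{A}_{j}$ carry the same mass $w(j)=1/|\Gamma_{j-1}^{*}|$, so the measure of color-$1$ points whose position lies within distance $n$ of $\partial Q_{r(j)}$ equals the corresponding fraction of $\Gamma_{j-1}^{*}$, which tends to $0$ as $j\to\infty$ because the pattern concentrates well inside $Q_{r(j)}$ once $r(j)/r(j-1)\to\infty$. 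Taking $j$ large enough that this fraction is below $\varepsilon$, I obtain a deep-inside set $A_{0}\subseteq A$ with $\mu(A_{0})>(1-\varepsilon)\mu(A)$. The point that makes this work is that enlarging $j$ does not inflate the pattern count: the separation reduction of the previous paragraph still collapses every pattern to a $Q_{n}$-window of $\Gamma_{i}^{*}$, with $i$ fixed by $r(i)\le n<r(i+1)$ alone. Consequently $N(A,d_{A,n},\varepsilon)\le|\Gamma_{i}^{*}|\le n^{3/2+o(1)}$ for all large $n$, which is the required polynomial bound and completes the proof.
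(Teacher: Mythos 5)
Your overall strategy matches the paper's: pass to the generating partition $\{A,\Omega_{0}\setminus A\}$, reduce to counting distinct return patterns $R_{n}(A,\cdot)$ (each level set of $R_{n}(A,\cdot)$ has $d_{A,n}$-diameter $0$), and show the count is polynomial in $n$. Your structural reduction is also sound: freeze the name at a deep stage $j$, and use the spacing of $\Lambda=\Gamma_{i+1}+\dots+\Gamma_{j-1}$ to see that only one translate $\Gamma_{i}^{*}+\lambda$ meets the window, so $R_{n}(A,x)=(\Gamma_{i}^{*}-w)\cap Q_{n}$ with $w\in\Gamma_{i}^{*}$. The gap is in the final count, and it traces to your choice of scale. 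Since $\Gamma_{i}^{*}=\Gamma_{1}+\dots+\Gamma_{i}$ is the set of color-$1$ positions in $\mathcal{A}_{i+1}$, its cardinality is governed by $r(i+1)$, not $r(i)$: the dominant factor is $|\Gamma_{i}|=(2s(i)/m(i)+1)^{2}=s(i)^{4/3+o(1)}=r(i+1)^{4/3+o(1)}$. (The statement of the claim estimating $|\Gamma_{i}^{*}|$ in the paper carries an index slip; its own proof computes $|\Gamma_{1}+\dots+\Gamma_{i-1}|$ as a power of $r(i)$.) Hence for $n$ in your range $r(i)\leq n<r(i+1)$, the bound $N(A,d_{A,n},\varepsilon)\leq|\Gamma_{i}^{*}|$ is polynomial in $r(i+1)$, which for $n$ near the bottom of the range is \emph{not} $n^{O(1)}$. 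The growth conditions permit, e.g., $r(i+1)=\exp(r(i)^{2})$, in which case along $n=r(i)$ your bound gives $\log\log N/\log n\approx2$, so it does not force $\rho_{slow}=0$.

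The repair is to index by the spacing thresholds rather than the radii, which is what the paper does: for $m(i-1)-2r(i-1)\leq n<m(i)-2r(i)$, the window $Q_{n}$ around a color-$1$ point meets at most one copy of $\mathcal{A}_{i}$ (distinct copies inside any $\mathcal{A}_{j}$, $j\geq i$, are at distance $\geq m(i)-2r(i)>n$), so the pattern depends only on the position of $x$ \emph{within} $\mathcal{A}_{i}$, i.e., only on the $\Gamma_{1}+\dots+\Gamma_{i-1}$-component of $w$ and not on all of $\Gamma_{i}^{*}$. That count is $r(i)^{O(1)}=m(i-1)^{O(1)}=n^{O(1)}$, because now $n\geq m(i-1)-2r(i-1)=m(i-1)^{1-o(1)}$. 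In short, your reduction is missing one further collapse for $n$ below $m(i)-2r(i)$; without it the estimate is genuinely insufficient. Two smaller points: the separation you invoke for the translates $\Gamma_{i}^{*}+\lambda$ requires $m(i+1)-2r(i+1)>n$, which does not follow from requirement (1) alone ($m>2r$) but does follow from requirement (2); and your boundary discussion is dispensable, since for a.e.\ $x$ and each fixed $n$ the window $u_{j}+Q_{n}$ is contained in $Q_{r(j)}$ for some $j=j(x,n)$, and the pattern count does not depend on $j$.
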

\begin{proof}
Let us estimate $N(A,d_{A,r},\varepsilon)$ (see section \ref{sub:Connections-with-recurrence}).
Fix $r$ in the range \[
m(i-1)-2r(i-1)\leq n<m(i)-2r(i)\]
If $x\in A$ then $R_{n}(A,x)$ is completely determined by the position
of $x$ in $\mathcal{A}_{i}$, since by the construction distances
between occurrences of sub-arrangements of $\mathcal{A}_{i}$ in $\mathcal{A}_{j}$
for any $j\geq i$ are at least $m(i)-2r(i)$, which is greater than
$n$). As we already observed, the possible positions of $x$ in $\mathcal{A}_{i}$
are the vectors $u\in\Gamma_{i}^{*}$. Hence \begin{eqnarray*}
N(A,d_{A,n},\varepsilon) & \leq & |\Gamma_{i}^{*}|\\
 & \leq & r(i)^{3/2+o(1)}\\
 & \leq & m(i-1)^{9/2+o(1)}\\
 & \leq & (m(i-1)-2r(i-1))^{9/2+o(1)}\\
 & \leq & n^{9/2+o(1)}\\
 & \leq & 2^{o(1)n}\end{eqnarray*}
so $\rho_{slow}((N(A,d_{N,r},\varepsilon))_{r=1}^{\infty})=0$ for
every $\varepsilon>0$. Since $\{\Omega_{0}\setminus A,A\}$ generates,
it follows that $\rho_{slow}(T_{0},\mu_{0})=0$.
\end{proof}
Essentially the same computation gives:
\begin{claim}
\label{cla:recurrence-rate-bound}If $x\in\Omega_{i}$ then $|R_{2r(i)}(A,x)|=|Q_{2r(i)}|^{3/4+o(1)}=r(i)^{3/2+o(1)}$.
\end{claim}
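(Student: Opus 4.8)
The plan is to estimate $|R_{2r(i)}(A,x)|$ directly using the structural description of the coloring already established in this section. Recall from Claim~\ref{cla:A-generates} that for $x\in A_{i,u}$ one has $R_{2r(i)}(A,x)=\Gamma_i^*-u$, so in particular $|R_{2r(i)}(A,x)|=|\Gamma_i^*|$ whenever $x$ lies in $\Omega_i$. Thus the first step is to reduce the claim to counting $|\Gamma_i^*|$. The difference from Claim~\ref{cla:A-generates} is only that here $x$ ranges over all of $\Omega_i$ rather than over $A$; but for any $x\in\Omega_i$ the set of return times to the color-$1$ set $A$ within $Q_{2r(i)}$ is again a translate of $\Gamma_i^*$ (the positions colored $1$ inside $\mathcal{A}_i$ are exactly $\Gamma_{i-1}^*$, and one checks that every point of $\Omega_i$ sees precisely the same return pattern up to the translation by its own position). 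So the essential content is purely combinatorial.

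Having made this reduction, I would invoke the claim proved just above, namely that $|\Gamma_i^*|=r(i)^{3/2+o(1)}$ (the displayed estimate $r(i-1)^{3/2+o(1)}$ in that claim should read $r(i)^{3/2+o(1)}$, as its own proof shows, since it sandwiches the quantity between $|\Gamma_{i-1}|=r(i)^{3/2+o(1)}$ and $\prod_{j=1}^{i-1}|\Gamma_j|=r(i)^{3/2+o(1)}$). This immediately gives $|R_{2r(i)}(A,x)|=r(i)^{3/2+o(1)}$.

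It then remains to rewrite this in terms of $|Q_{2r(i)}|$. Since $|Q_{2r(i)}|=(4r(i)+1)^2=r(i)^{2+o(1)}$, we have $\log|Q_{2r(i)}|=(2+o(1))\log r(i)$, and therefore
\[
\frac{\log|R_{2r(i)}(A,x)|}{\log|Q_{2r(i)}|}=\frac{(3/2+o(1))\log r(i)}{(2+o(1))\log r(i)}=\frac{3}{4}+o(1),
\]
which is exactly the asserted exponent. Collecting the two expressions yields $|R_{2r(i)}(A,x)|=|Q_{2r(i)}|^{3/4+o(1)}=r(i)^{3/2+o(1)}$.

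I do not expect any genuine obstacle here: the whole statement is a bookkeeping consequence of facts already in hand, and the phrase "essentially the same computation" in the text signals exactly this. The one point requiring a moment's care is verifying that the return-pattern identity $R_{2r(i)}(A,x)=\Gamma_i^*-u$ persists for all $x\in\Omega_i$ and not merely for $x\in A$; this follows because the color-$1$ sites within the radius-$r(i)$ window around any position in $\mathcal{A}_i$ are governed entirely by the fixed set $\Gamma_{i-1}^*$ of occupied positions, independently of the color of $x$ itself, together with the spacing condition $m(i)-2r(i)>2r(i)$ (guaranteed by requirement~(1) on the growth of $r(i)$) which ensures no return times from deeper stages $j>i$ intrude into the window $Q_{2r(i)}$.
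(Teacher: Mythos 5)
Your proof is correct and is exactly the computation the paper intends by ``essentially the same computation'': reduce $|R_{2r(i)}(A,x)|$ to the count of $1$-colored positions in $\mathcal{A}_i$ (a fixed pattern $\Gamma^{*}$ seen, up to translation by the position of $x$, from every $x\in\Omega_i$), invoke the earlier cardinality estimate $r(i)^{3/2+o(1)}$, and compare with $|Q_{2r(i)}|=r(i)^{2+o(1)}$. The only caveats are cosmetic: in the paper's indexing the relevant pattern is $\Gamma_{i-1}^{*}$ rather than $\Gamma_i^{*}$ (the paper's own indices are inconsistent here, as you observe), and requirement (1) only guarantees $m(i)>2r(i)$ rather than the $m(i)>4r(i)$ needed to exclude all intrusions from neighboring copies of $\mathcal{A}_i$ into the window $Q_{2r(i)}$ --- but such intrusions add at most a bounded number of extra translates of the pattern, changing the count by a constant factor and leaving the exponent $3/4+o(1)$ untouched.
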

In particular this implies that $\alpha(T,\mu_{0})\geq3/4$ (equality
also holds but we shall not need this). Combined with the previous
claim, this shows that there exist $\mathbb{Z}^{k}$-actions such
that $\rho(T,\mu)\leq k\alpha(T,\mu)$.

\subsection{\label{sub:Symbolic-construction}Symbolic systems}

It will be useful now to introduce the language of symbolic representation.
Let $\Sigma$ be a finite set considered as a discrete topological
space. The full $\mathbb{Z}^{2}$-shift over $\Sigma$ is the product
space $\Sigma^{\mathbb{Z}^{2}}$, endowed with the product topology
and the Borel $\sigma$-algebra, which we suppress in our notation.
The shift action $S$ is the continuous $\mathbb{Z}^{2}$ actions
given by \[
(S^{u}x)_{v}=x_{u+v}\]
We use the same symbol $S$ to represent the shift action on full
shifts over different alphabets.

Given an integer $m$ and a map $\pi:\Sigma^{Q_{m}}\rightarrow\Delta$
to another finite set $\Delta$ of symbols, there is an induced map
$\overline{\pi}:\Sigma^{\mathbb{Z}^{2}}\rightarrow\Delta^{\mathbb{Z}^{2}}$
defined by \[
(\overline{\pi}x)_{u}=\pi((S^{u}x)|_{Q_{m}})\]
This is a factor map, i.e. $S^{u}\overline{\pi}=\overline{\pi}S^{u}$.
Given an invariant measure $\nu$ on $\Sigma^{\mathbb{Z}^{2}}$ the
push-forward $\overline{\nu}=\overline{\pi}\nu$ of $\nu$ is $S$
invariant, and $\overline{\pi}$ is then a factor map between the
measure preserving systems $(\Sigma^{\mathbb{Z}^{2}},\nu,S)$ and
$(\Delta^{\mathbb{Z}^{2}},\overline{\nu},S)$.

\subsection{Completion of the construction}

Let $(\Omega_{0},\mathcal{B}_{0},\mu_{0},T_{0})$ be the action constructed
in Section \ref{sub:a-rank-1-action}. We now augment it to obtain
an action with non-uniform slow entropy $>1$. Informally, we shall
color each point in $A$ randomly by one of two colors $a$ and $b$.
In order to formalize this it is convenient to represent the actions
symbolically.

First, we identify $(\Omega_{0},\mathcal{B}_{0},\mu_{0},T_{0})$ with
a shift invariant measure on $\{0,1\}^{\mathbb{Z}^{2}}$ obtained
by pushing $\mu_{0}$ through the map $f:\Omega_{0}\rightarrow\{0,1\}^{\mathbb{Z}^{2}}$
which codes for the partition $\{A,\Omega\setminus A\}$, \[
f(x)_{u}=1_{A}(T^{u}x)\]
Thus from now on $\Omega_{0}=\{0,1\}^{\mathbb{Z}^{2}}$, $\mu_{0}$
is a shift-invariant measure on $\Omega_{0}$, and $T_{0}=S$, the
shift.

Next, let $a,b$ be new symbols and let $\mu_{1}$ denote the product
(Bernoulli) measure on $\Omega_{1}=\{a,b\}^{\mathbb{Z}^{2}}$ whose
marginals are the uniform $(\frac{1}{2},\frac{1}{2})$ measure on
$\{a,b\}$. The probability measure $\mu_{1}$ is $S$-invariant and
ergodic. 

Let $\Omega_{2}=\{0,1\}^{\mathbb{Z}^{2}}\times\{a,b\}^{\mathbb{Z}^{2}}$,
and consider the product measure $\mu_{2}=\mu_{0}\times\mu_{1}$,
which we regard as a shift-invariant measure on $\Omega_{2}$ in the
obvious manner. 
\begin{claim}
\label{cla:ergodicity-of-product}$(\Omega_{2},\mu_{2},S)$ is ergodic.\end{claim}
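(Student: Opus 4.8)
The statement is that the product system $(\Omega_2,\mu_2,S) = (\Omega_0 \times \Omega_1, \mu_0 \times \mu_1, S)$ is ergodic, where $(\Omega_0,\mu_0,S)$ is the rank-one action from Section \ref{sub:a-rank-1-action} (realized symbolically) and $(\Omega_1,\mu_1,S)$ is the Bernoulli $(\frac12,\frac12)$ shift on $\{a,b\}^{\mathbb{Z}^2}$. The plan is to exploit the fact that one factor is a Bernoulli (hence weakly mixing) probability-preserving action, while the other is merely an infinite-measure-preserving ergodic action, and to reduce the ergodicity of the product to a mixing-type property of the infinite-measure factor.

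\emph{Approach.} The cleanest route I would take is via the characterization of ergodicity through invariant functions together with a spectral/weak-mixing argument on the Bernoulli factor. Suppose $F \in L^\infty(\mu_2)$ is $S$-invariant. First I would fix the $\Omega_0$-coordinate and expand $F(x,\cdot)$ in the orthonormal Fourier--Walsh basis of $L^2(\mu_1)$ associated to the Bernoulli structure on $\{a,b\}^{\mathbb{Z}^2}$: for each finite $W \subseteq \mathbb{Z}^2$ let $\chi_W(y) = \prod_{u \in W}\varepsilon(y_u)$ where $\varepsilon(a)=+1, \varepsilon(b)=-1$, so that $\{\chi_W\}$ is an orthonormal basis and $\chi_W \circ S^v = \chi_{W+v}$. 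Writing $F(x,y) = \sum_W c_W(x)\,\chi_W(y)$, invariance $F = F\circ S^v$ for all $v$ forces $c_W(x) = c_{W-v}(S^v x)$ (after matching coefficients, using that $S$ acts on the index set $W$ by translation). The key point is then that for $W \neq \emptyset$ the orbit $\{W+v : v \in \mathbb{Z}^2\}$ is infinite, and one wants to conclude $c_W \equiv 0$ a.e.\ for all $W \neq \emptyset$, leaving only $c_\emptyset$, which is the coefficient of the constant function and must itself be $S$-invariant on $(\Omega_0,\mu_0)$ — hence constant by ergodicity of $\mu_0$ (Proposition \ref{pro:ergodicity-of-rank-1-actions}).

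\emph{The main obstacle.} The delicate step is killing the coefficients $c_W$ with $W \neq \emptyset$. The relation $c_W(x) = c_{W-v}(S^v x)$ couples different index sets, so one cannot directly apply ergodicity of $\mu_0$ to a single $c_W$. Instead I would consider, for a fixed conjugacy type of $W$ under translation, the function $G(x) = \sum_{W' \sim W} |c_{W'}(x)|^2$, where the sum is over the (infinite) translation-orbit of $W$; by the coefficient relation this $G$ is $S$-invariant on $(\Omega_0,\mu_0)$, and since the translation-orbit is infinite while $\sum_{W'} \int |c_{W'}|^2 \, d\mu_0 \leq \|F\|_{L^2}^2 < \infty$ is finite (Parseval in the $y$-variable, integrated over a finite-measure piece — here one must be careful because $\mu_0$ is infinite, so I would work first over a fixed finite-measure reference set $B \subseteq \Omega_0$ and use that an infinite sum of translates of an integrable $S$-invariant nonnegative function on a conservative ergodic infinite-measure system must vanish). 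Concretely, ergodicity of $\mu_0$ gives that $G$ is a.e.\ constant; if that constant were positive then summing over infinitely many disjoint translates would violate integrability against any finite-measure set, forcing $G \equiv 0$ and hence $c_W \equiv 0$. This conservativity-plus-infinite-orbit argument is precisely where the infinite-measure nature of $\mu_0$ does the work in place of the usual weak-mixing product lemma, and it is the step I expect to require the most care to state correctly.

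\emph{Alternative.} If the coefficient bookkeeping proves awkward, I would fall back on the standard fact that a product of an ergodic infinite-measure-preserving action with a Bernoulli (hence weakly mixing, and in particular with trivial point spectrum and the property that its nonconstant $L^2$-functions have no invariant directions) probability-preserving action is ergodic — this follows from a Fubini/disintegration argument showing any invariant set $E \subseteq \Omega_2$ has sections $E_x \subseteq \Omega_1$ that are a.e.\ invariant under a group of measure-preserving maps with only trivial invariant sets, forcing $\mu_1(E_x) \in \{0,1\}$, after which the indicator $x \mapsto \mathbf{1}\{\mu_1(E_x)=1\}$ is $S$-invariant on $(\Omega_0,\mu_0)$ and hence a.e.\ constant by Proposition \ref{pro:ergodicity-of-rank-1-actions}. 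I would present whichever of these two formulations leads to the shorter verification of the infinite-measure Fubini step.
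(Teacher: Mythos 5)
Your main (Fourier--Walsh) argument is a genuinely different route from the paper's: the paper disposes of this claim in one line by observing that the Bernoulli factor $\mu_{1}$ is mildly mixing and invoking Furstenberg--Weiss, whose theorem says precisely that a mildly mixing probability-preserving action times any ergodic conservative i.m.p.\ action is ergodic. Your expansion $F(x,y)=\sum_{W}c_{W}(x)\chi_{W}(y)$ with $c_{W+v}(x)=c_{W}(S^{v}x)$ amounts to proving the relevant special case of that theorem by hand, which is perfectly legitimate and arguably more self-contained. One step needs to be stated more carefully, though: for nonempty $W$ the function $G(x)=\sum_{W'\sim W}|c_{W'}(x)|^{2}=\sum_{v\in\mathbb{Z}^{2}}|c_{W}(S^{v}x)|^{2}$ is indeed invariant, bounded by $\|F\|_{\infty}^{2}$, and hence a.e.\ equal to a finite constant $c$; but a positive constant does \emph{not} "violate integrability against a finite-measure set'' ($\int_{B}G=c\,\mu_{0}(B)<\infty$ is unproblematic). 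What actually kills $c_{W}$ is Hopf/Halmos recurrence: $(\Omega_{0},\mu_{0},S)$ is conservative (automatic for a nonatomic ergodic i.m.p.\ action), so for $\phi=|c_{W}|^{2}\geq0$ the sum $\sum_{v}\phi(S^{v}x)$ is a.e.\ infinite unless $\phi=0$ a.e.; since you have shown this sum is a.e.\ finite, $c_{W}\equiv0$. You clearly have this mechanism in mind (you name conservativity), so this is an imprecision rather than a fatal gap, but the "integrability'' phrasing should be replaced by the recurrence argument.

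Your fallback "Alternative,'' however, rests on a false general principle and should be dropped or repaired. Weak mixing of the probability-preserving factor does \emph{not} imply that its product with an ergodic conservative i.m.p.\ action is ergodic; this is exactly the phenomenon that led Furstenberg and Weiss to isolate mild mixing (weakly mixing but rigid transformations give counterexamples). Concretely, your Fubini step only yields that $x\mapsto\mu_{1}(E_{x})$ is invariant, hence constant; the sections $E_{x}$ satisfy $E_{S^{v}x}=S^{v}E_{x}$ rather than being invariant, so "trivial invariant sets for $\mu_{1}$'' gives you nothing further, and concluding $\mu_{1}(E_{x})\in\{0,1\}$ requires mild mixing (or, as in your main argument, the full Bernoulli structure plus recurrence). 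Since $\mu_{1}$ is Bernoulli, hence mixing, hence mildly mixing, the conclusion is true --- but the justification must go through mild mixing, as in the paper, not weak mixing.
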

\begin{proof}
This follows e.g. from \cite{FurstenbergWeiss1978}: $\mu_{2}$ is
a Bernoulli measure and therefore mildly mixing, so its product with
any ergodic i.m.p. action, and in particular $(\Omega_{2},\mu_{2},S)$
is ergodic.
\end{proof}
We next {}``erase'' the symbol $a,b$ from sites which are not in
$A$. Let $\pi:\{0,1\}\times\{a,b\}\rightarrow\{0,a,b\}$ be the map
\[
\pi(\sigma,\tau)=\left\{ \begin{array}{cc}
0 & \mbox{ if }\sigma=0\\
\tau & \mbox{ if }\sigma=1\end{array}\right.\]
and let $\mu_{3}=\overline{\pi}\mu_{2}$. Clearly, $(\mu_{0},S)$
is a factor of $(\mu_{3},S)$, obtained from the symbol-wise map $\pi'(0)=0$
and $\pi'(a)=\pi'(b)=1$.

Identify the set $A\subseteq\Omega_{0}$ with the set $(\overline{\pi}')^{-1}A\subseteq\Omega_{3}$,
which we also denote by $A$. Let $\mathcal{R}=\{\Omega_{3}\setminus A,A\}$;
then clearly \[
\rho_{slow}(S,\mu_{3},\mathcal{R})=\rho_{slow}(S,\mu_{0},\{\Omega_{0}\setminus A,A\})=0\]
On the other hand,
\begin{claim}
\label{cla:slow-entropy-estimate}$\rho_{slow}(S,\mu_{3})\geq3/2$.\end{claim}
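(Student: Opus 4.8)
\textbf{Proof proposal for Claim \ref{cla:slow-entropy-estimate}.}

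The plan is to produce a \emph{single} co-finite partition $\mathcal{Q}$ of $\Omega_3$ whose slow entropy is at least $3/2$, which by definition of $\rho_{slow}(S,\mu_3)$ as a supremum over partitions suffices. The natural choice is the partition according to the first coordinate symbol, $\mathcal{Q}=\{Q_0,Q_a,Q_b\}$ where $Q_\sigma=\{x\in\Omega_3:x_0=\sigma\}$, whose core is exactly $A=Q_a\cup Q_b$ (the sites carrying an $a/b$ label). The point is that on $A$ the extra Bernoulli coloring injects genuine combinatorial richness into the names, and I want to count how many $d_{\mathcal{Q},n}$-separated configurations this creates. First I would fix $A$ as the reference set of finite measure and estimate $N(A,d_{\mathcal{Q},n},\varepsilon)$ from below.

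The heart of the estimate is the following. For $x\in A$, the $(\mathcal{Q},n)$-name records, at each $u\in R_n(A,x)$ (the return times to $A$ up to radius $n$), whether the label is $a$ or $b$; at all other sites it reads $0$. Two points $x,y\in A$ with the \emph{same} return pattern $R_n(A,x)=R_n(A,y)=E$ have $d_{\mathcal{Q},n}(x,y)$ equal to the normalized Hamming distance between their $\{a,b\}$-labelings on $E$, since the denominator in \eqref{eq:Pn-distance} is exactly $|E|$. Because $\mu_1$ is the i.i.d. fair coin, conditioned on the rank-one orbit structure the labels on $E$ are independent uniform bits; a standard counting/volume bound (the same binomial estimate used in the proof of Proposition \ref{pro:recurrence-bound-for-rho-slow}, now applied as a \emph{lower} bound on separation) shows that on a set of $x$ of nearly full measure in $A$ one needs at least $2^{c|E|}$ sets of $d_{\mathcal{Q},n}$-diameter $\le\varepsilon$ to cover them, for some $c=c(\varepsilon)>0$. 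Thus $N(A,d_{\mathcal{Q},n},\varepsilon)\ge 2^{c|R_n(A,x)|}$ for a typical $x$, and the problem reduces to lower-bounding the return count $|R_n(A,x)|$.

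This is where the construction of Section \ref{sub:a-rank-1-action} does the work: Claim \ref{cla:recurrence-rate-bound} gives, for $x\in\Omega_i$, the estimate $|R_{2r(i)}(A,x)|=r(i)^{3/2+o(1)}=|Q_{2r(i)}|^{3/4+o(1)}$. Taking $n=2r(i)\to\infty$ along this subsequence and combining with the lower bound above yields
\[
\log N(A,d_{\mathcal{Q},n},\varepsilon)\ \ge\ c\,|R_n(A,x)|\ =\ c\,n^{3/2+o(1)},
\]
so that $\log\log N/\log n\to 3/2$ along this subsequence, giving $\rho_{slow}(S,\mu_3,\mathcal{Q})\ge 3/2$ for every small $\varepsilon$, hence $\rho_{slow}(S,\mu_3)\ge 3/2$.

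The main obstacle I anticipate is making the lower bound on the covering number rigorous and uniform in $n$. The binomial bound in Proposition \ref{pro:recurrence-bound-for-rho-slow} is used there as an \emph{upper} bound on how efficiently one can cover; here I need the reverse, namely that the random $\{a,b\}$-labelings genuinely cannot be compressed below exponential size at scale $\varepsilon$. The clean way to get this is a counting argument: a single $d_{\mathcal{Q},n}$-ball of radius $\varepsilon$ centered at a labeling of $E$ contains at most $\binom{|E|}{\le \varepsilon|E|}\le 2^{H(\varepsilon)|E|}$ labelings (with $H$ the binary entropy), while the number of labelings carried by a positive-$\mu_2$-measure set of $x$ is, by the law of large numbers for the Bernoulli measure $\mu_1$, at least $2^{(1-o(1))|E|}$; choosing $\varepsilon$ small makes $H(\varepsilon)<1$ and forces at least $2^{(1-H(\varepsilon))|E|}$ balls. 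The delicate part is quantifying "positive measure" uniformly: I must pass to a subset $A_0\subseteq A$ of measure $>(1-\varepsilon)\mu(A)$ on which both the return-count estimate of Claim \ref{cla:recurrence-rate-bound} and the typicality of the Bernoulli labels hold simultaneously for all large $n=2r(i)$, invoking the ratio ergodic theorem (Theorem \ref{thm:ratio-ergodic-thm}) for the former and ergodicity of $\mu_2$ (Claim \ref{cla:ergodicity-of-product}) for the latter. Once these two almost-everywhere statements are combined on a common full-measure set, the covering lower bound follows and the claim is established.
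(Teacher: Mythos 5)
Your proposal is correct and follows essentially the same route as the paper's proof: fix the generating partition by the symbol at the origin, decompose $A$ according to the return pattern $R_n(A,x)$ for $n=2r(i)$, observe that on each such piece $d_{\mathcal{P},n}$ reduces to the normalized Hamming distance between uniform i.i.d.\ $\{a,b\}$-labelings of the return set, and combine the standard exponential lower bound on Hamming covering numbers with the count $|R_{2r(i)}(A,x)|=r(i)^{3/2+o(1)}$ from Claim \ref{cla:recurrence-rate-bound}. The only differences are cosmetic: you spell out the binomial-entropy volume bound that the paper cites as well known, and you invoke ergodic-theoretic uniformity that is not actually needed since the return-count estimate holds deterministically for every $x\in\Omega_i$.
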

\begin{proof}
Let $\mathcal{P}=\{P_{0},P_{a}P_{b}\}$ denote the partition with
elements are the cylinder sets $P_{\sigma}=\{x\in\Omega_{3}\,:\, x_{0}=\sigma\}$,
$\sigma\in\{0,a,b\}$, so $\mathcal{P}$ generates, and note that
$\mathcal{R}=\{P_{0},P_{a}\cup P_{b}\}$. Our aim is to estimate $N(A,d_{\mathcal{P},n},\varepsilon)$
from below for $n=2r(i)$.

Indeed, fix $n=2r(i)$ and consider the partition of $A$ induced
by $\mathcal{P}^{Q_{n}}$, which refines $\mathcal{R}^{Q_{n}}$. Let
$A_{1},\ldots,A_{M}\subseteq A$ denote the intersection of $A$ with
the atoms of $\mathcal{P}^{Q_{n}}$. Since $N(A,d_{\mathcal{P},n},\varepsilon)\leq C$
implies $N(A_{j},d_{\mathcal{P},n},\varepsilon)\leq C$ for some $j$,
it suffice to show for every $j$ that this is impossible if $C\leq2^{r(i)^{3/2+o(1)}}$. 

Fix $1\leq j\leq M$, let $x_{j}\in A_{j}$, and note that $R_{n}(A,x_{j})\subseteq Q_{r}$
depends on $j$ but not on the choice of $x_{j}\in A_{j}$. By Claim
\ref{cla:recurrence-rate-bound}, we have $|R_{n}(A,x_{j})|=|Q_{n}|^{3/4+o(1)}$.

From the construction of $\mu_{3}$ it is easy to see that the $(\mathcal{P},n)$-names
arising from points in $A_{j}$ consist of all colorings of $Q_{r}$
such that $Q_{r}\setminus R_{n}(A,x_{j})$ is colored $0$ and $R_{n}(A,x_{j})$
is colored by $a$ and $b$. Furthermore these colorings are equally
likely with respect to $\mu_{3}|_{A_{j}}$. Thus, $N(A_{j},d_{\mathcal{P},n},\varepsilon)=N(\{a,b\}^{R_{n}(A,x_{j})},d,\varepsilon)$,
where $d$ is the standard Hamming distance and the measure on $\{a,b\}^{R_{n}(A,x_{j})}$
is the product measure with uniform marginals. It is well-known, however,
that there is a lower bound of the form $2^{(1-\delta)|R_{n}(A,x_{j})|}$,
where $\delta=\delta(\varepsilon)\rightarrow0$ as $\varepsilon\rightarrow0$.
This gives $N(A_{j},d_{\mathcal{P},r},\varepsilon)\geq2^{r^{3/2+o(1)}}$,
which is the desired result.
\end{proof}
In summary, we have shown that $(\Omega_{3},\mu_{3},S)$ has slow
entropy $\geq3/2$ but is does not have uniform slow entropy, because
with respect to $\mathcal{R}$ its slow entropy is $0$. Thus by Theorem
\ref{thm:large-s-implies-no-diff-model} it has no differentiable
model. This establishes Theorem \ref{thm:non-realizable actions-exist}.

\bibliographystyle{plain}
\bibliography{bib}

\end{document}